\newtheoremstyle{theorem}
{10pt} 
{10pt} 
{\sl} 
{\parindent} 
{\bf} 
{. } 
{ } 
{} 
\newtheorem{prop}{Proposition}
\newtheorem{lem}{Lemma}
\newtheorem{thm}{Theorem}
\newtheorem{cor}{Corollary}
\newtheorem{defn}{Definition}
\newtheorem{assp}{Assumption}
\theoremstyle{remark}
\newtheorem{rk}{Remark}
\newtheorem*{ex}{Example}
\def\d{{\rm d}}
\newcommand{\inv} { {-1} }
\def\E{\mathbb{E}}
\def\P{\mathbb{P}}
\def\Pter{\mathrm{P}}
\def\Eter{\mathrm{E}}
\def\supp{\mathrm{supp}}
\def\[{[\![}
\def\]{]\!]}
\def\ind{\mathbf{l}}
\def\T{\mathbf{\Theta}}
\def\d{\mathrm{d}}
\def\e{\mathrm{e}}
\def\R{\mathbf{R}}
\def\N{\mathbf{N}}
\def\Omegabf{\mathbf{\Omega}}
\def\Sbf{\mathbf{S}}
\def\D{\mathbf{D}}
\def\diag{\mathrm{diag}}
\def\hP{  \widehat{ \P \circ {\theta^\bullet_T}^{-1} }  }
\def\hF{  \hat{F}_{\theta^\bullet_T} }
\def\tP{\P \circ {\theta^\bullet_T}^{-1}}               
\title[Econometric inference and multiple use of the same data]{On econometric inference \\and multiple use of the same data}
\author{Benjamin Holcblat}
\address{Department of Finance\\
BI Norwegian Business School\\
Oslo, Norway 0484}
\email{Benjamin.Holcblat@bi.no}
\author{Steffen Gr{\o}nneberg}
\address{Department of Economics\\
BI Norwegian Business School\\
Oslo, Norway 0484}
\email{steffeng@gmail.com}
\begin{document}

\begin{abstract}

In fields that are mainly nonexperimental, such as economics and finance, it is inescapable to compute test statistics  and confidence regions  that are not probabilistically independent from previously examined data.  The Bayesian and Neyman-Pearson inference theories are known to be inadequate  for such a practice. We show that these inadequacies also hold  m.a.e.  (modulo approximation error). We develop a general econometric  theory, called  the neoclassical  inference theory, that is immune to this inadequacy m.a.e.   The neoclassical inference theory  appears to nest model calibration, and most econometric practices, whether they are labelled Bayesian or \`a la Neyman-Pearson. We derive a general, but simple adjustment to make standard errors account for the approximation error.







\bigskip

{\noindent \textit{Keywords}:     Hypothesis testing;   Confidence region; Estimation; Model calibration.}

\medskip

{\noindent \textit{JEL classification}: C1.}

\end{abstract}

\date{\today}

\maketitle  

\doublespace

\section{Introduction }\label{sec1}

\indent By definition, in nonexperimental fields, new data cannot be generated.  Consequently, it is inescapable to compute  test statistics and confidence regions  that are not probabilistically independent from previously examined data.  By the Skorohod's representation (1976), this practice is equivalent to using twice the same data, so that we call it  \textit{multiple use of the same data}.\footnote{The Skorohod's representation (1976) states that, for any two Borel random variables $ Y$ and $Z $, there exist a Borel random variable $U  $ independent from $Y$, and a Borel function $h(.,.)$ such that   $Z=h(Y,U)$. Thus, if $Y$ and $Z $ are not independent, using first $Y$, and then $Z$ is equivalent to using  first $Y$, and then reusing $Y$ with $U$. } 
   The main objective of this paper is to develop a general econometric theory, called  the \textit{neoclassical  inference theory},\footnote{There are two reasons for this name. Firstly, it is a classical theory, in the \textit{statistical}  sense of the term, i.e.,  in this theory, the  unknown parameter $\theta_0$ is not treated as a random variable, but as a constant. Secondly, it is neoclassical in the \textit{historical} sense of the term: it    seems to formalize underlying principles of work  by classical authors (e.g., Laplace, 1812/1820, livre II, chap. 3; Fisher, 1925/1973, part V).} that is adequate for multiple use of the same data m.a.e. (modulo approximation error). The Bayesian and Neyman-Pearson inference theories  are not adequate for such a practice, even m.a.e. 
  Thus, if we set aside approximation errors, for which we provide an adjustment,  this paper    elucidates why   econometric inference is possible in fields that are mainly nonexperimental, such as economics and finance. 


\subsection{Key idea}

In Bayesian and Neyman-Pearson theories, test statistics  and confidence regions are not independent from   data as the former are functions of the latter even  m.a.e.  Then, if the same realized data are re-used to compute new test statistics  or confidence regions, distributions  conditional on   the   previously observed statistics should be considered (e.g., Lehmann and Romano, 1959/2005, sec. 10.1 for Neyman-Pearson theory; Savage, 1954/1972, sec. 3.5 for Bayesian theory). We show that this conditioning, which is typically ignored in practice,  is a challenge for Bayesian and Neyman-Pearson theories. The neoclassical inference theory circumvents this conditioning. The key idea    is to use realized data to approximate  the  distribution of  random variables, called \textit{generic  proxies}, that have the same unconditional  distribution as data-based statistics, but are probabilistically independent from the  realized data.   


\begin{ex} \label{ex} 
Let  $X_{1:T}:=\left(X_{t} \right)_{t=1}^T $ be data that are  assumed to be $T$ i.i.d. (independent and identically distributed) random variables following a Gaussian distribution with mean $\theta_{0}$ and standard deviation $s$, denoted $\mathcal{N}(\theta_0,s) $.    The realized data are $X_{1:T}(\omega):=\left(X_{t}(\omega)\right)_{t=1}^T $ where $\omega$ denotes an element of the sample space $\Omegabf $. We want to make inference about the
unknown parameter $\theta_0=\E (X_1)$ through its finite-sample proxy, the average.  Now, consider   generic data   $X_{1:T}^{\bullet}:=\left(X_{t}^{\bullet} \right)_{t=1}^T $ that are independently generated by the same Gaussian distribution as the data  $X_{1:T} $, i.e., $X_{1:T} $ and $X_{1:T}^{\bullet}$ are independent, but have the same unconditional distribution. Then, the  average of the data $\overline{X}_T$, denoted $\overline{X}_T:=\frac{1}{T}\sum_{t=1}^TX_t $, and the average of the generic data, denoted $\overline{X}_T^{\bullet} $, have  the same unconditional distribution, $\mathcal{N}(\theta_0, \frac{s}{\sqrt{T}})$,  and are equally informative about $\theta_0$. Nevertheless, previous knowledge of the  realized data $X_{1:T}(\omega) $ typically affects the distribution of  $\overline{X}_T $,  but does  not affect the distribution of the generic  proxy $\overline{X}_T^{\bullet}$.  For example, if  the realized average $\overline{X}_T(\omega) $ is  known from a previous study, there is no uncertainty about it, so that its  distribution is a Dirac at the realized average (i.e., $ \delta_{\overline{X}_T(\omega) }$), while the distribution of $\overline{X}_T^{\bullet}$ is still  the same Gaussian distribution, $\mathcal{N}(\theta_0, \frac{s}{\sqrt{T}})$.
Thus, the idea of  the neoclassical inference theory is to rely on  an approximation of the distribution of  $\overline{X}_T^{\bullet}$ to make inference about $\theta_0$. Although the data $X_{1:T}$ are independent from the generic proxy $\overline{X}_T^{\bullet}$, their observation provides an approximation of its distribution.   For example, by the Lindeberg-L\'evy CLT (central limit theorem),  a Gaussian distribution centered at  the realized average, $\overline{X}_T(\omega) $,   with  standard deviation $\frac{s_T (X_{1:T}(\omega))}{\sqrt{T}}:=\frac{ \sqrt{\frac{1}{T}\sum_{t=1}^T\left(X_t(\omega)-\overline{X}_T(\omega)\right)^2}}{\sqrt{T}} $,    is an approximation of the distribution of  $\overline{X}_T^{\bullet}$, which is $\mathcal{N}(\theta_0, \frac{s}{\sqrt{T}})$.
\hfill $\diamond $ 
\end{ex}
\noindent A similar idea is present in Monte-Carlo simulation methods\,:   The observation of realized random variables enables the approximation of the distribution of a generic random  variable, which is independent from the realized ones. In fact, this similarity has a mathematical underpinning under the standard assumption of ergodicity, which stipulates an equivalence between exploration of  the sample space and exploration of  the time dimension.

In a way, the neoclassical inference theory generalizes the immunity of the standard  justification of point estimators, consistency, to  confidence intervals and tests. Unlike the Neyman-Pearson justifications for tests and confidence regions, consistency is immune to multiple use of the same data m.a.e. A consistent point estimator of a parameter $\theta_0$ does not depend on the realized data m.a.e.: by the definition of consistency,  for almost all possible realizations of the data, such a point estimator is arbitrary close to the  fixed parameter $\theta_0$ m.a.e. See Appendix \ref{ap1} on p. \pageref{ap1} for  a formal statement.  


\subsection{Literature overview}

In the statistical and econometric  literature, the issue raised by multiple use of the same data for Neyman-Pearson and Bayesian         inference theories has been  occasionally discussed.  E.g., Lehmann and Romano, 1959/2005, sec. 10.1 for Neyman-Pearson theory;  Berger, 1980/2006, pp. 112--113 and 284 for Bayesian theory; Leamer, 1978, pp. v--vii for an assessment of the acuteness of the issue, and chap. 9 for  an ad hoc proposal  to mitigate the issue for Bayesian inference.  The common wisdom seems to be that the issue is unavoidable.  To the best of our knowledge, no general formal solution has been proposed even m.a.e. Holcblat (2012) relies on the  idea behind   the neoclassical theory only in the particular case of the empirical saddlepoint (ESP) approximation. 

Multiple use of the same data is  \textit{not} treated in the large literature about multiple hypothesis testing (e.g., Lehmann and Romano, 1959/2005, chap. 9 for  a perspective \`a la Neyman-Pearson; Berger, 1980/2006, chap. 7  for a Bayesian perspective).  In this literature, it is assumed that  the set of all statistics to be potentially computed is determined before examination  of the data.
 This situation does not correspond to nonexperimental fields
as their evolution is often the result of a hard-to-predict dialogue between theory and empirical studies based on more or less the same realized data. For example, Compustat, CRSP (Center for Research in Security Prices)  and BEA (Bureau of Economic Analysis) data have been  re-used in numerous empirical studies in corporate finance, asset pricing and macroeconomics, respectively.

\subsection{Organization of the paper}

The paper is organized as follows. Section \ref{sec2} and \ref{sec3}, respectively, show that Neyman-Pearson and Bayesian inference theories  are inadequate for multiple use of the same data even m.a.e.   Section \ref{sec4} presents  elements of the neoclassical theory, and proves  its immunity to multiple use of the same data m.a.e. Section \ref{sec5} revisits model calibration and prominent econometric practices from a neoclassical point of view, and presents a simple adjustment to make standard errors account for the approximation error. An important point to note is that this standard-error adjustment holds under the usual   $\sqrt{T} $-asymptotic normality assumption, thus applying to a large part of econometric practice. 
Some readers may find sections \ref{sec2} and \ref{sec3} obvious, but the latter  should be considered in comparison with  section  \ref{sec4}. The contribution of  this paper is essentially theoretical, and  \textit{not} mathematical. 
Applied econometricians might want to focus on subsection \ref{sec5_3}, which assesses the most common econometric practice from a neoclassical point of view.

\begin{rk} \label{rk1}

In accordance with the main objective of this paper,  we reason m.a.e.  in sections  \ref{sec2}- \ref{sec4}.       For the Neyman-Pearson theory, ignoring approximation errors means that   we consider the asymptotic limit superior (or  limit inferior) of the outer  (or inner) probability distribution to be exact for the given sample size, when the sampling distribution is not available.
For the Bayesian theory, this has no bearing because probability distributions are assumed to be perfectly known by the econometrician (e.g., Savage, 1954/1972, pp. 59--60). For the
neoclassical inference theory, this means that  we consider the approximation of the  sampling distribution of  the   generic proxy  to be exact. Unlike sections  \ref{sec2}- \ref{sec4}, section \ref{sec5} does not reason m.a.e., and treats of the approximation error from a neoclassical point of view.
\hfill $\diamond $ 
\end{rk}
   
\section{Neyman-Pearson theory and multiple use of the same data}\label{sec2}

In this section, we explain the m.a.e. inadequacy of the Neyman-Pearson theory for multiple use of the same data. Subsection \ref{sec2_1} informally explains it in the standard case of asymptotic $t$-statistics. Subsection  \ref{sec2_2} formalizes it in the general case.  

\subsection{The case of asymptotic $t$-statistics} \label{sec2_1}

\qquad  Asymptotic $t$-statistics are among the most widely-used statistics to compute confidence regions or carry out hypothesis tests. The Neyman-Pearson theoretical justification of an asymptotic $t$-test of size $\alpha$ is that the $t$-statistic has a  probability $1-\alpha$ m.a.e.  to be between the $\alpha/2$ and  $1-\alpha/2$ quantiles of a standard Gaussian distribution under the test hypothesis. However,  once computed, the $t$-statistic is in the non-rejection region with probability 0 or 1, i.e.,  it \textit{is} or it is \textit{not }in the  non-rejection region. Thus, if the result of this first test leads the econometrician to compute a second $t$-test of size $\alpha$,  the corresponding $t$-statistic cannot typically have a probability of $1-\alpha$ m.a.e.
to be between the $\alpha/2$ and  $1-\alpha/2$ quantiles of a standard Gaussian distribution under the test hypothesis. The observation  of the first $t$-statistic has removed a part of the randomness of the second $t$-statistic.  Except in a few cases  (e.g., Gouri\'eroux and Monfort, 1989/1996, chap. 19), $t$-statistics computed on  the same data set are not independent. This means that the Neyman-Pearson theoretical justification does not hold for the second $t$-test. Because of the duality between hypothesis testing and confidence regions in the Neyman-Pearson theory, there is the same concern for confidence intervals based on $t$-statistics. Subsection \ref{sec2_2} proves that this concern about the Neyman-Pearson theoretical justification of confidence regions and tests is not limited to $t$-statistics.

\subsection{The general case} \label{sec2_2}

Assumption \ref{assp1} sets up  the minimal elements of the Neyman-Pearson theory that are necessary to formalize multiple use of the same data.

\begin{assp}\label{assp1}
\textbf{\emph{(a)}} Let $(\Omegabf, \mathcal{E}_\Omegabf)$ be a measurable  space where  $\mathcal{E}_\Omegabf$ denotes a $\sigma$-algebra of $\Omegabf$.    \emph{\textbf{(b)}} Let $\theta_0 \in \T$ be the unknown parameter, where $\T$ denotes the parameter space. \emph{\textbf{(c)}} Let $X_{1:T}$  be some data, i.e., a measurable  mapping from  $(\Omegabf, \mathcal{E}_\Omegabf )$ to a measurable space $(\underline{\Sbf}_{T},\underline{\mathcal{S}}_T)$, where $T$ and $\underline{\Sbf}_{T} $ denote the sample size and  the observation space, respectively.   
\end{assp}

\begin{rk} There is no restriction on the parameter space $\T $, so that  it can be Euclidean or infinite-dimensional. \hfill $\diamond $
\end{rk}
Definitions \ref{defn2} and \ref{defn9} recall the definition of Neyman-Pearson confidence regions and tests.

\begin{defn}[Neyman-Pearson confidence region]\label{defn2} Let $\alpha \in [0,1]$, and $\mathrm{P}$ a probability measure on $(\Omegabf, \mathcal{E}_\Omegabf)$.  Under $\Pter $, a $1-\alpha$ Neyman-Pearson confidence region $C_{1-\alpha,T}$  is a measurable random  subset of the parameter space that has a probability of at least $1- \alpha $ m.a.e. to contain the unknown parameter $\theta_0$, i.e.,  (i) for all $\omega \in \Omegabf$, $C_{1-\alpha,T}(X_{1:T}(\omega))\subset \T$, (ii) $\left\{ x_{1:T} \in \underline{\Sbf}_T: \theta_0 \in C_{1-\alpha,T}(x_{1:T})\right\} \in \underline{\mathcal{S}}_T$ m.a.e., and (iii)   $\mathrm{P}\left\{ \omega \in \Omegabf: \theta_0 \in C_{1-\alpha,T}(X_{1:T}(\omega))\right\} \geqslant 1-      \alpha$ m.a.e.
\end{defn}

\begin{defn}[Neyman-Pearson test]\label{defn9} Let   $\mathrm{H}$ be  a test hypothesis, and $\mathrm{P}$ a probability measure on $(\Omegabf, \mathcal{E}_\Omegabf)$.  Define the measurable decision space $(\D,\mathcal{P}(\D) )$ where $\D := \left\{d_\mathrm{H},d_\mathrm{A}\right\} $,  and $ \mathcal{P}(\D)$ denotes the power set of $\D$. The decisions $d_\mathrm{H}$ and $d_\mathrm{A}$, respectively, correspond to  the non-rejection and the rejection of the test hypothesis $\mathrm{H}$. Under $\Pter $,  a Neyman-Pearson test of level $\alpha\in [0,1]$ is a decision rule $d_{T}(.)$ that leads to the rejection  of $\mathrm{H}$ with  a probability  of at most $\alpha$ m.a.e. under $\mathrm{H}$, i.e.,   a $\underline{\mathcal{S}}_T/\mathcal{P}(\D)$-measurable function $d_{T}$ m.a.e. s.t.   $\mathrm{P} ( d_{T}(X_{1:T}) = d_\mathrm{A}) \leqslant \alpha$ m.a.e., if $\mathrm{H}$ is true.
\end{defn}
\begin{rk} As indicated by the qualification ``m.a.e.," when no  finite-sample distribution is available, we consider the asymptotic limit superior (or  limit inferior) of the outer  (or inner) probability distribution.  (see Remark \ref{rk1} on p. \pageref{rk1}). Thus, our setup covers
asymptotic Neyman-Pearson tests and confidence regions, and the case \`a la Hoffmann-J{\o}rgensen (see Wellner and van der Vaart, 1996), in which finite-sample statistics are not measurable although their limit is measurable.
 In the latter case, in Definitions \ref{defn2} and \ref{defn9},   $\mathrm{P}(\theta_0 \in C_{1-\alpha,T}(X_{1:T}))$ and $\mathrm{P} ( d_{T}(X_{1:T}) = d_\mathrm{A}) $,  respectively,  stand for  $\lim \inf_{T \rightarrow \infty} \mathrm{P}_*(\theta_0 \in C_{1-\alpha,T}(X_{1:T}))$ and $\lim \sup_{T \rightarrow \infty} \mathrm{P}^* ( d_{T}(X_{1:T}) = d_\mathrm{A})$ where $\Pter_* $ and $\mathrm{P}^*$, respectively, denote the  inner and  outer probabilities implied by $\Pter$.
  \hfill $\diamond $
\end{rk}

Theorem  \ref{thm1} formalizes the concern raised by  previous knowledge of   realized data that are not probabilistically independent from  Neyman-Pearson confidence regions and tests.  

\begin{thm}[Neyman-Pearson inadequacy] \label{thm1} Let $\P$ be   the unknown probability measure on $(\Omegabf, \mathcal{E}_\Omegabf)$, and $ \left\{ X_{1:T} \in A_T\right\}\in\mathcal{E}_\Omegabf$     a nonzero-probability event, i.e., $\P  (X_{1:T} \in A_T) =c>0$.
 For all $E \in \mathcal{E} $,  define $\P(E|X_{1:T} \in A_T) := \frac{\P\left(E\cap\left\{ X_{1:T} \in A_T\right\}\right)}{\P( X_{1:T} \in A_T)}  $.
Denote  a Neyman-Pearson $1-\alpha $ confidence region for $\theta_0$ under $\P$ with $C_{1-\alpha,T} $, and a Neyman-Pearson test of level $\alpha$ under $\P$ with $d(.)$. Under Assumption \ref{assp1},
\begin{enumerate}
\item[i)]  if  $\left\{ X_{1:T} \in A_T\right\}$ and $\left\{  \theta_0 \in C_{1-\alpha,T}(X_{1:T})\right\} $ are not independent m.a.e.,  then 
\begin{eqnarray*}
\P( \theta_0 \in C_{1-\alpha,T}(X_{1:T})|X_{1:T} \in A_T) \neq \P(\theta_0 \in C_{1-\alpha,T}(X_{1:T})) \text{ m.a.e.};
\end{eqnarray*}
\item[ii)] if  $\left\{ X_{1:T} \in S\right\}$ and $\left\{ d_T(X_{1:T}) = d_A\right\} $ are not independent m.a.e.,  then 
\begin{eqnarray*}
\P(  d_T(X_{1:T}) = d_\mathrm{A}|X_{1:T} \in A_T) \neq \P( d_T(X_{1:T}) = d_\mathrm{A}) \text{ m.a.e.} 
\end{eqnarray*}
\end{enumerate}

\end{thm}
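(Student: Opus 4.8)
The plan is to observe that both claims are, at bottom, the contrapositive of the definition of probabilistic independence, so that the work is almost entirely definitional bookkeeping around the qualifier ``m.a.e.''

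First I would fix notation common to both parts. Write $F := \{X_{1:T} \in A_T\}$, so that by hypothesis $\P(F) = c > 0$. For part i) set $E := \{\theta_0 \in C_{1-\alpha,T}(X_{1:T})\}$, and for part ii) set $E := \{d_T(X_{1:T}) = d_\mathrm{A}\}$ (reading the event $S$ in the statement of ii) consistently as this same $F$). The only facts I would invoke are: (a) by the definition of conditional probability recalled in the statement, $\P(E \mid F) = \P(E \cap F)/\P(F)$ since $\P(F) = c > 0$; and (b) the standard definition that $E$ and $F$ are independent iff $\P(E \cap F) = \P(E)\,\P(F)$.

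Next I would run the contrapositive directly. Suppose, contrary to the conclusion, that $\P(E \mid F) = \P(E)$. Multiplying through by the fixed strictly positive constant $\P(F) = c$ yields $\P(E \cap F) = \P(E)\,\P(F)$, i.e. $E$ and $F$ \emph{are} independent, contradicting the hypothesis. Hence $\P(E \mid F) \neq \P(E)$, which is precisely the asserted conclusion in each case. The argument for i) and ii) is then identical, differing only in the choice of $E$.

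The single point requiring genuine care is the qualifier ``m.a.e.'' In the Neyman-Pearson setting of Remark~\ref{rk1}, the probabilities above are to be read as the relevant limit superior/inferior of the outer/inner probabilities, and ``not independent m.a.e.'' must mean $\P(E \cap F) \neq \P(E)\,\P(F)$ only after passing to that limit. I therefore expect the main (and essentially only) obstacle to be making the phrase ``not independent m.a.e.'' precise enough that the equivalence in (b) survives the limiting procedure and the possible non-measurability of $C_{1-\alpha,T}$ and $d_T$ at finite $T$. Once that is pinned down, dividing by the exact constant $c > 0$ is a monotone operation preserving both the inequality and its m.a.e.\ status, so the contrapositive goes through verbatim for both i) and ii).
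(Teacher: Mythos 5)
Your proposal is correct and follows essentially the same route as the paper: the paper's proof is the same chain of definition-unwinding (divide the independence identity by the constant $\P(X_{1:T}\in A_T)=c>0$ and invoke the stated definition of conditional probability), merely written as a string of equivalences rather than a contrapositive. You also correctly identify the one genuine subtlety --- making ``not independent m.a.e.''\ survive the passage to limit superior/inferior of outer/inner probabilities --- which is exactly what the paper relegates to Appendix \ref{ap5}, where division by the constant $c$ is shown to commute with those limits.
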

\begin{proof} It is definition chasing, essentially. For (i) and (ii), respectively denote $\left\{  \theta_0 \in C_{1-\alpha,T}(X_{1:T})\right\}$ and $\left\{   d_T(X_{1:T}) = d_\mathrm{A}\right\} $ with $E$. By definition of independence between events, m.a.e., $\P( E\cap \left\{ X_{1:T} \in A_T\right\})  \neq  \P(E ) \P (X_{1:T} \in A_T) \stackrel{(a)}{\Leftrightarrow} \frac{\P\left(E\cap\left\{ X_{1:T} \in A_T\right\}\right)}{\P( X_{1:T} \in A_T)}  \neq  \P (E ) \stackrel{(b)}{\Leftrightarrow}  \P( E| X_{1:T} \in A_T)  \neq \ \P(E )$, where equivalences  can be seen as follows. \textit{(a)} By assumption, $ \P( X_{1:T} \in A_T)>0$. \textit{(b)} By definition, $\P(E| X_{1:T} \in A_T) := \frac{\P\left(E\cap\left\{ X_{1:T} \in A_T\right\}\right)}{\P( X_{1:T} \in A_T)}  $.  See Appendix \ref{ap5} on p. \pageref{ap5} for more details regarding the possible approximation error.   
\end{proof}

The key defining properties of  a Neyman-Pearson confidence region and test are, respectively, the probability that the confidence region contains the unknown parameter, and the probability of rejecting the hypothesis under the test hypothesis. Theorem  \ref{thm1} proves that these key defining properties are  affected by the previous observation of a nonzero-probability event $\left\{ X_{1:T} \in A_T\right\} $ that is not probabilistically independent from the  corresponding confidence region and  test.  Before the observation of $\left\{ X_{1:T} \in A_T\right\}$, the probability $\mathrm{P}$ of the Neyman-Pearson Definitions \ref{defn2} and \ref{defn9} is $\P$ m.a.e., but,  after  observation of $\left\{ X_{1:T} \in A_T\right\}$, $\mathrm{P}$ is $\P ( .| X_{1:T} \in A_T) $ m.a.e. 

%
%

A solution would be to systematically account for previous knowledge of the data  by determining conditional probability, such as $\P ( .| X_{1:T} \in A_T) $. However,  most of the time,  this is operationally impossible, especially in nonexperimental fields. In nonexperimental fields, this previous knowledge can correspond to   computed statistics or plots,  but also to historical events personally experienced or studied. For example, defining valid Neyman-Pearson tests or confidence regions for an applied American econometrician who studies the US economy  appears  an impossible task. Moreover, even if it was possible, it would make  criteria of validity of statistical discoveries path-dependent, and thus difficult to understand. Therefore,  the Neyman-Pearson inference theory appears practically inadequate for nonexperimental fields.    

%

\begin{rk}\label{rk2} Because of our focus on multiple use of the same data, in this paper, we present the operational impossibility to condition on previous knowledge of the realized data as the source of the Neyman-Pearson inadequacy. In fact, if one makes the distinction between unknown and random quantities as the Neyman-Pearson theory does (e.g., $\theta_0$ is unknown, but constant), it is the realization of the data and not the  knowledge of them that matters.   Thus, one  needs to condition on all the data that  have been realized prior to the determination of the test statistics and confidence regions to be computed. When only \textit{part} of the data at use   have  been previously realized,   we are back to Theorem \ref{thm1} and the generic operational impossibility to determine conditional probability.   When \textit{all} the  data at use have  been previously realized,   the conditioning is trivial, but  then tests should should have zero probability type I error m.a.e., and, under additional but general assumptions,  $C_{1-\alpha,T}(X_{1:T})=\T $ m.a.e. $\P$-a.s.  See Appendix  \ref{ap4} on p. \pageref{ap4}.\qquad      \text{ }
       \hfill      $\diamond $

\end{rk}

\section{Bayesian theory and multiple use of the same data}\label{sec3}

 \begin{flushright}
\begin{small}``In a strictly logical sense, this criticism of (practical) prior dependence 
on the data cannot be refuted."\\
\begin{scriptsize}  Berger (1980/2006, p. 112).\end{scriptsize}\end{small}
 \end{flushright}

As for Neyman-Pearson theory,   multiple use of the same data is a challenge for Bayesian inference theory. Subsection \ref{sec3_1} explains the concern in the basic   case in which  Bayes' formula holds, and subsection \ref{sec3_2} formalizes it in the general case. 

\subsection{The basic case} \label{sec3_1}
\qquad Taken literally, Bayesian theory regards  inference as a two-stage game between nature and an econometrician (e.g.,  Ferguson, 1967; Borovkov, 1984/1998). In the first stage, nature draws the  parameter $\theta_0 $  according to a prior distribution $\pi_{\theta_0}(.)$, and then  draws data $X_{1:T} $ according to a conditional probability density function (p.d.f.), $ \pi_{X_{1:T}|\theta_0}(.|.)$. In the second stage, the econometrician makes  inferences about the realized  parameter value $\theta_0$ given the sample at hand.\footnote{To avoid additional notations, the random parameter $\theta_0$ is defined as the identity mapping on the parameter space $\T $, so that its realized value is also denoted by $\theta_0 $.  } As usual in game theory, the p.d.f. $\pi_{X_{1:T}|\theta_0}(. |.)$ and $\pi_{\theta_{0}}(.)$ are common knowledge.   Thus, the econometrician updates the prior distribution,  $ \pi_{\theta_{0}}(.)$,  thanks to  data  according to Bayes' formula
\begin{eqnarray*}
\pi_{\theta_{0}|X_{1:T}}\left(\theta|X_{1:T}(\omega)\right)=\frac{\pi_{X_{1:T}|\theta_0}(X_{1:T}(\omega)|\theta)\pi_{\theta_{0}}(\theta)}{\int_{\mathbf{\Theta}} \pi_{X_{1:T}|\theta_0}(X_{1:T}(\omega) |\dot \theta)\pi_{\theta_0}(\dot \theta) \mu(\d\dot \theta)},
\end{eqnarray*}
to obtain  the posterior distribution $\pi_{\theta_0|X_{1:T}}(.|X_{1:T}(\omega)) $.

But, after the p.d.f. of the unknown parameter given the data has been computed, the data  are known and  fixed.   Their randomness has disappeared.  Thus, the econometrician cannot learn anymore from them. If the Bayes formula is applied a second time to the same data, the p.d.f. of data conditional on the unknown parameter  is  one, so that the second posterior is equal to the first posterior.  Mathematically,   Bayesian updating  becomes 
\begin{eqnarray*}
& &  \frac{\pi_{X_{1:T}|\theta_0,X_{1:T}}(X_{1:T}(\omega) |\theta,X_{1:T}(\omega))\pi_{\theta_{0}|X_{1:T}}(\theta|X_{1:T}(\omega))}{\int_{\mathbf{\Theta}} \pi_{X_{1:T}|\theta_0,X_{1:T}}(X_{1:T}(\omega) |\dot\theta,X_{1:T}(\omega))\pi_{\theta_{0}|X_{1:T}}(\dot\theta|X_{1:T}(\omega)) \mu(\d\dot \theta)}\\
 & &\qquad \qquad \qquad \qquad =  \frac{1\times \pi_{\theta_{0}|X_{1:T}}(\theta|X_{1:T}(\omega))}{\int_{\mathbf{\Theta}}1\times\pi_{\theta_{0}|X_{1:T}}(\dot\theta|X_{1:T}(\omega)) \mu(\d\dot \theta)}=\pi_{\theta_{0}|X_{1:T}}(\theta|X_{1:T}(\omega)).
%
\end{eqnarray*}
Therefore,  Bayes inference theory cannot justify multiple use of the same data.   Subsection \ref{sec3_2} shows that the conclusion remains unchanged in the general case, in which densities do not necessarily exist.    

\subsection{The general case}\label{sec3_2}
Assumption \ref{assp2} defines the general structure of   Bayesian inference  along the lines of Florens, Mouchart and Rolin (1990).\footnote{The main difference between their notations and our notations is the following. Unlike them, we do not identify $\sigma$-algebras with their inverse image by the coordinate map. See Florens, Mouchart and Rolin, 1990, p. 11, warning. Our choice makes the presentation less elegant, but it allows us to maintain notational consistency  within this paper.} 
\begin{assp} \label{assp2} \textbf{\emph{(a)}}  Let $(\Omegabf \times \T, \mathcal{E}_{\Omegabf}\otimes \mathcal{E}_{\T}, \Pi)$ be a probability space, where   $(\T, \mathcal{E}_\T )$ is the parameter space, and where $\mathcal{E}_{\Omegabf}\otimes \mathcal{E}_{\T}$ denotes the product $\sigma$-algebra   of the $\sigma$-algebras   $\mathcal{E}_{\Omegabf}$ and $\mathcal{E}_{\T} $.     \textbf{\emph{(b)}} Let  $\left\{\mathcal{F}_{\Omegabf, n}\right\}_{n\geqslant 0}$ be a filtration in $ \mathcal{E}_{\Omegabf}$. Define a    filtration   $\left\{\mathcal{F}_n\right\}_{n\geqslant 0}$ in $\mathcal{E}_{\Omegabf}\otimes \mathcal{E}_{\T} $ s.t., for all $n \in \N $, $\mathcal{F}_n =\mathcal{F}_{\Omegabf,n}\otimes \{ \T ,\emptyset \}$.   
\end{assp}

 The filtration $ \left\{ \mathcal{F}_n\right\}_{n\geqslant 0}  $ corresponds to the accumulation of information that comes from  the sample space. In other words,  $\mathcal{F}_n$ is the information set of the econometrician  after $n$ Bayesian updates.  Definition  \ref{defn1} reminds the general definition of posterior probabilities.

\begin{defn}[Posterior probability]\label{defn1} For all $n\in \N$ and $B \in  \{ \Omegabf ,\emptyset \}\otimes\mathcal{  E}_{\T}  $,
the  $\mathcal{F}_{ n }$-posterior probability of $B $ is the expectation of the indicator function $\ind_{B} $  conditional on $\mathcal{F}_n $, i.e., $\E( \ind_{B}| \mathcal{F}_n)$.  
\end{defn}

Implicitly, Definition \ref{defn1}  also defines priors as the distinction between a prior and a posterior depends on the update of reference. After $n$ Bayesian updates, $\E( .| \mathcal{F}_n)$ is the prior, while $\E( .| \mathcal{F}_{n+1})$ is the posterior. 

\begin{rk} The framework is  general: We do not impose restrictions on the parameter space $\T$, or require the existence of regular conditional probabilities. \hfill $\diamond $
\end{rk}

Assumption \ref{assp3} specify  the minimal additional ingredients necessary to study multiple use of the same data.

\begin{assp} \label{assp3} \textbf{\emph{(a)}}   Let $X_{1:T}$  be some data, i.e., a measurable  mapping from  $(\Omegabf, \mathcal{E} )$ to the measurable space $(\underline{\Sbf}_T,\underline{\mathcal{S}}_T)$, where $\underline{\Sbf}_{T}$ denotes the observation space. \textbf{\emph{(b)}} There exists $ n_{1} \in \N $ s.t. $\mathcal{F}_{ n_1+1 }= \mathcal{F}_{ n_1 }\vee \sigma(X_{1:T}) $, where $ \sigma(X_{1:T})$ denotes the  $\sigma$-algebra generated by $X_{1:T}$, and $\mathcal{F}_{ n_1 }\vee [\sigma(X_{1:T})\otimes \{ \T ,\emptyset \}] $ the $\sigma$-algebra generated by the union of   $\mathcal{F}_{ n_1 }$ and $ [\sigma(X_{1:T})\otimes \{ \T ,\emptyset \}] $, i.e., $\mathcal{F}_{ n_1 }\vee  [\sigma(X_{1:T})\otimes \{ \T ,\emptyset \}]:=\sigma\left(\mathcal{F}_{ n_1 }\cup  [\sigma(X_{1:T})\otimes \{ \T ,\emptyset \}] \right) $. 
\end{assp}

Assumption \ref{assp3}(a) requires the existence of data, $X_{1:T}$, while Assumption \ref{assp3}(b) requires that    update $n_1$ comes from the use of  the data $X_{1:T}$ . Theorem \ref{thm2} formalizes the effect of a second use of the same data $X_{1:T}$.

\begin{thm}[Bayesian  inadequacy] \label{thm2} Let $ n_2 \in \[ n_{1}+1, \infty \[$ s.t $\mathcal{F}_{ n_2+1 }= \mathcal{F}_{ n_2 }\vee  [\sigma(X_{1:T})\otimes \{ \T ,\emptyset \}] $. Then under Assumptions \ref{assp2} and \ref{assp3}, the $\mathcal{F}_{n_2+1 }$-posterior probability and $\mathcal{F}_{ n_2 }$-posterior probability are equal, i.e., for all $B \in  \{ \Omegabf ,\emptyset \}\otimes\mathcal{  E}_{\T}$, 
\begin{eqnarray*}
\E( \ind_{B}| \mathcal{F}_{ n_{2}+1 })= \E( \ind_{B}|  \mathcal{F}_{ n_{2} }).
\end{eqnarray*}
\end{thm}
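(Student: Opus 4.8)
The plan is to reduce the claimed equality of the two posteriors to a single equality of conditioning $\sigma$-algebras, $\mathcal{F}_{n_2+1}=\mathcal{F}_{n_2}$, after which the statement is immediate because a conditional expectation depends on the data only through the $\sigma$-algebra it conditions on. The decisive observation I would exploit is that the data $X_{1:T}$ have \emph{already} been absorbed into the filtration at the earlier update $n_1$, so that conditioning on them a second time at $n_2$ cannot enlarge the information set.

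First I would record, from Assumption \ref{assp3}(b), that $\mathcal{F}_{n_1+1}=\mathcal{F}_{n_1}\vee[\sigma(X_{1:T})\otimes\{\T,\emptyset\}]$, and since a join contains each of its constituents, this gives the inclusion $\sigma(X_{1:T})\otimes\{\T,\emptyset\}\subseteq\mathcal{F}_{n_1+1}$. Next, because $\{\mathcal{F}_n\}_{n\geqslant 0}$ is a filtration and $n_2\geqslant n_1+1$, monotonicity yields $\mathcal{F}_{n_1+1}\subseteq\mathcal{F}_{n_2}$. Chaining the two inclusions gives $\sigma(X_{1:T})\otimes\{\T,\emptyset\}\subseteq\mathcal{F}_{n_2}$. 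I would then invoke the elementary fact that the join of a $\sigma$-algebra with one of its own sub-$\sigma$-algebras returns the larger one: the hypothesis $\mathcal{F}_{n_2+1}=\mathcal{F}_{n_2}\vee[\sigma(X_{1:T})\otimes\{\T,\emptyset\}]$ therefore collapses to $\mathcal{F}_{n_2+1}=\mathcal{F}_{n_2}$. Feeding this identity into Definition \ref{defn1} forces $\E(\ind_B|\mathcal{F}_{n_2+1})=\E(\ind_B|\mathcal{F}_{n_2})$ ($\Pi$-a.s.) for every $B\in\{\Omegabf,\emptyset\}\otimes\mathcal{E}_{\T}$, which is exactly the asserted equality of posteriors.

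The only point demanding a little care—and the single place where a misreading could derail the argument—is the notational identification in Assumption \ref{assp3}(b): the generator written $\sigma(X_{1:T})$ must be read as the sub-$\sigma$-algebra $\sigma(X_{1:T})\otimes\{\T,\emptyset\}$ of the product $\mathcal{E}_{\Omegabf}\otimes\mathcal{E}_{\T}$, so that all joins above genuinely live in the same space and the inclusions compose. Once this identification is fixed there is no analytic content whatsoever; as in the proof of Theorem \ref{thm1}, the argument is pure definition chasing, with the substantive message—that a second Bayesian update on already-used data is vacuous—carried entirely by the monotonicity of the filtration.
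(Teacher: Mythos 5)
Your proof is correct and follows essentially the same route as the paper's: both reduce the claim to the identity $\mathcal{F}_{n_2+1}=\mathcal{F}_{n_2}$ by observing that $\sigma(X_{1:T})\otimes\{\T,\emptyset\}$ is already contained in $\mathcal{F}_{n_2}$ (via Assumption \ref{assp3}(b) and filtration monotonicity), so the join adds nothing. You merely make explicit the chain of inclusions and the notational identification that the paper's step \textit{(a)} leaves implicit.
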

\begin{proof}  It is definition chasing. By definition, $\mathcal{F}_{ n_2+1 }:=\mathcal{F}_{ n_2 }\vee  [\sigma(X_{1:T})\otimes \{ \T ,\emptyset \}]:=\sigma(\mathcal{F}_{ n_2 }\cup  [\sigma(X_{1:T})\otimes \{ \T ,\emptyset \}] ) \stackrel{(a)}{=}\sigma(\mathcal{F}_{ n_2 })\stackrel{(b)}{=}\mathcal{F}_{ n_2 }$, where equalities  can be seen as follows. \textit{(a)  }By Assumption \ref{assp3}(b), $ [\sigma(X_{1:T})\otimes \{ \T ,\emptyset \}] \subset \mathcal{F}_{ n_2 }$. \textit{(b)}  $\mathcal{F}_{ n_2 }$ is itself a $\sigma$-algebra by definition of a filtration. 
\end{proof}

The update $n_1+1$ corresponds to the first use of the data $X_{1:T}$, while the update $n_2+1$
corresponds to the second use. Theorem \ref{thm2} proves that the second use of the same data does not increase the  information set,  $\mathcal{F}_{n_2} $, and thus the posterior  remains  the same. An immediate corollary of this result is the absence of formal Bayesian  justification for analyses, in which   an econometrician claims to have obtained a different ``posterior" after a first use of the same data.  Theorem \ref{thm2} shows that such  analyses are incompatible with Bayesian inference theory.   

\begin{rk} For brevity and relevance, we mainly consider the Neyman-Pearson and Bayesian inadequacy when there is partial, and complete previous knowledge of the realized data, respectively. However, in parallel,  complete, and partial previous knowledge causes Neyman-Pearson and Bayesian inadequacy, respectively. Complete previous knowledge causes Neyman-Pearson inadequacy  when  the randomness needed to  justify a new test or a new confidence region has completely disappeared. For example, when one wants  a $t$-test of size $\alpha$ after computation of a $1-\alpha$ confidence interval   based on the same statistic,   the $t$-statistic is between the $\alpha/2 $ and $1-\alpha/2 $ quantiles with probability $0 $ or $1 $ because a confidence interval corresponds to the set of hypothesis that would not have been rejected. See also Proposition \ref{prop1} on p. \pageref{prop1}, which can be seen as a formalization of the case, in which all the data have been previously examined.  Partial knowledge causes Bayesian inadequacy when it is impossible to incorporate previous information through a formal Bayesian updating. For example, Bayesian inference theory is typically inadequate when partial previous knowledge corresponds  to historical events personally experienced.   See also  Savage  (1954/1972, pp. 59--60) for more details about this inadequacy.           \hfill $\diamond $ 

\end{rk}

As Neyman-Pearson theory, Bayesian inference theory is inadequate for multiple use of the same data.
Both theories rely on a randomness, which is disappearing as data are used, so that multiple use of the same data appears difficult to justify. However, in nonexperimental fields, multiple use of the same data is inescapable. Thus, the relevance of Neyman-Pearson  and Bayesian inference theories    to nonexperimental fields is not obvious.

\section{Elements of neoclassical inference theory } \label{sec4}


The purpose of this section is to introduce elements of a theory  that is immune to multiple use of the same data m.a.e., and that provides a common framework for point estimation, confidence regions and hypothesis testing. There does not seem to exist such an inference theory in the literature.  

  This section is organized as follows. Subsection \ref{sec4_1} presents the main idea of the neoclassical theory, subsection \ref{sec4_2} its main elements, and subsection \ref{sec4_3} proves that it is theoretically immune to multiple use the same data m.a.e.
Hereafter, for simplicity, we only consider the parameter space $ \T $ to be an Euclidean space.

\subsection{Main idea} \label{sec4_1}  The setup of the neoclassical inference theory is standard (e.g., Borovkov, 1984/1998, chap. 2). An  econometrician wants to infer a constant and  unknown parameter $\theta_0$ of an econometric model $(\Omegabf, \mathcal{\mathcal{E}_\Omegabf}, \P)$. The  parameter  $\theta_0$ is assumed to belong to a known parameter space, denoted $\T$. The only difference between the unknown parameter, $\theta_0$, and other elements of the parameter space $\T $ is that the former one equals a mapping of the generating probability measure,  $\P $, i.e.,
\begin{eqnarray}
\theta_0=:G(\P) \label{eq1}
\end{eqnarray}
 where $G(.)$ maps  probability measures to elements of the parameter space $\T$. The econometrician does not know $\P$, but has  access to some  data $X_{1:T}:= \left(X_{t} \right)_{t=1}^T $ that are assumed to be generated by the econometric model  $(\Omegabf, \mathcal{\mathcal{E}_\Omegabf}, \P)$. Thus, the econometrician  approximates $\theta_0 $ using the  data $X_{1:T}$, i.e., defines a proxy
\begin{eqnarray}
\theta^*_T:= H_T(X_{1:T})   \label{eq2}
\end{eqnarray}
where $H_T $ is a mapping from the observation space $\underline{\Sbf}_{T} $ to the parameter space $\T$. Often, $H_T(X_{1:T})=G_{}(\P_{X_{1:T}})$, where     $\P_{X_{1:T}}:=\frac{1}{T}\sum_{t=1}^T \delta_{ X_t} $ is the empirical measure with  $\delta_{X_t} $ denoting the Dirac measure at $X_t $. We call $\theta^*_T$ a finite-sample proxy of the unknown parameter $\theta_0$. 

Now, if there exist some data $X_{1:T}^\bullet $ with the same unconditional distribution as $X_{1:T} $ (i.e., $\P \circ X_{1:T}^{-1}=\P \circ{X_{1:T}^{\bullet}}^{-1}  $) but independent from  them, these  data $X_{1:T}^\bullet $  induce an equally informative  finite-sample proxy of $\theta_0$ 
 \begin{eqnarray}
 \theta^\bullet_T:=H_T(X_{1:T}^\bullet).\label{eq4} \end{eqnarray}
Building on this remark, the idea of  the neoclassical inference theory is to base inference of $\theta_0$ on   an approximation of the distribution of a generic finite-sample  proxy of $\theta_0$  that has the same unconditional distribution as $\theta^*_T $, but  is independent from the data $X_{1:T} $.
 We denote the generic finite-sample  proxy with  $\theta^{\bullet}_T $. In this paper, we take the finite-sample proxy  $\theta^*_T$ (i.e., the choice of $H_T(.)$) as given, in order to stay away from the question  of  the properties  of the generic proxy $\theta^{\bullet}_T $.\footnote{This question, which can be seen as one of the main topic of the statistical  and  econometric literature,  corresponds to the study of the properties of what is called the estimator in the Neyman-Pearson theory.}

\begin{ex}(continued from p. \pageref{ex}).    From equation (\ref{eq1}),     $\theta_0=\frac{1}{s\sqrt{2 \pi}}\int_\R x\exp[-\frac{1}{2}\frac{(x-\theta_0)^{2}}{s^2}]\d x\\ =\int_{\Omegabf}X_1(\omega)\P(\d\omega )=:G(\P)$.  From equation (\ref{eq2}),   $\theta^*_T:=H_{T}(X_{1:T}):= \frac{1}{T}\sum_{t=1}^T X_t=\overline{X}_T$.  From equation (\ref{eq4}), $\theta^\bullet_T:=\overline{X}_T^\bullet$. Assume that, as previously mentioned,  the  asymptotic Gaussian approximation is used to approximate   the distribution of  $\theta^\bullet_T $.  Then,     the distribution of the generic finite-sample proxy $\theta^\bullet_T $  is $\mathcal{N}\left(\overline{X}_T(\omega), \frac{s_T(X_{1:T}(\omega))}{\sqrt{T}}  \right) $ m.a.e.  
 
 \text{ }  \hfill $\diamond $
\end{ex}

\subsection{Definitions} \label{sec4_2}

We require the following assumptions, in addition to Assumption \ref{assp1}, to outline the neoclassical theory.

\begin{assp}\label{assp4} \textbf{\emph{(a)}} Let $\P$ be  the unknown probability measure on $(\Omegabf, \mathcal{E})$.  \textbf{\emph{(b)}} Let $(\T,  \mathcal{E}_{\T})$ be a measurable space s.t.    $\T $ is a Borel subset of $\R^p $ with $p\in \N \setminus\{0\} $,
 and $ \mathcal{E}_{\T}$ denotes the Borel $\sigma$-algebra on $\T$. \textbf{\emph{(c)}} Let $U$ be a uniformly distributed random variable with support  $[0,1]$ on the probability  space, $(\Omegabf, \mathcal{E}_\Omegabf, \P)$, s.t.  $X_{1:T}$  and  $U$  are independent for all $T\in \[1, \infty \[ $. 
\end{assp}

Assumption \ref{assp4}(c) is the only assumption that is new with respect to Neyman-Pearson theory. Nevertheless, its novelty is limited as  econometric reasoning (e.g., asymptotic theory) and implementations of the Neyman-Pearson theory (e.g., bootstrap) often implicitly require it. The random variable $U$ is a randomization device that ensures the existence of the  generic finite-sample proxy $\theta^\bullet_T $. 
   More generally, Assumption \ref{assp4}(c)  ensure the existence of  a countable number of random variables with any probability distribution (e.g., Kallenberg, 1997/2002, Lemmas 3.21 and 3.22).  Such an assumption is innocuous. We can always redefine  the probability space $(\Omegabf, \mathcal{E}_\Omegabf, \P)$  as the product of  an original probability space with the probability space $([0,1], \mathcal{B}([0,1]), \lambda) $, where $\mathcal{B}([0,1]) $ and $\lambda$, respectively, denote the Borel $\sigma$-algebra on $[0,1]$ and the Lebesgue measure (e.g., Kallenberg, 1997/2002, pp. 111--112).

  Thanks to Assumption \ref{assp4}(c), given a (data-based) finite-sample proxy of $\theta_0$, Lemma \ref{lem1} proves the existence of a corresponding generic  proxy of $\theta_0$.  
 
\begin{lem}[Existence of a generic  proxy] \label{lem1}Let the finite-sample proxy of $\theta_0$ be  $\theta^*_T:=H_T(X_{1:T})$, where $H_T $ is a measurable mapping $ $ from $(\underline{\Sbf}_T, \underline{\mathcal{S}}_T)$ to  $(\T,  \mathcal{E}_{\T})$.   Under Assumptions \ref{assp1} and \ref{assp4},  there exists a corresponding generic  finite-sample proxy of $\theta_0 $, i.e., a  measurable mapping $\theta^{\bullet}_T$ from $(\Omegabf, \mathcal{E}_\Omegabf)$ to  $(\T,  \mathcal{E}_{\T})$    that is independent from $X_{1:T} $, but has the same unconditional distribution as $\theta^*_T$.
\end{lem}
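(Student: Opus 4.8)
The plan is to let the randomization device $U$ do all the work. The only structural fact required is that the law of $\theta^*_T$ lives on a standard Borel space, so it can be reproduced exactly by pushing the uniform variable $U$ through a suitable measurable map; independence from $X_{1:T}$ is then inherited directly from the independence of $U$ and $X_{1:T}$ granted by Assumption \ref{assp4}(c). In short, the lemma reduces to invoking one standard measure-theoretic construction and then chasing definitions.

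First I would record that $\theta^*_T = H_T(X_{1:T})$ is a measurable mapping from $(\Omegabf, \mathcal{E}_\Omegabf)$ to $(\T, \mathcal{E}_{\T})$, being the composition of the measurable data map $X_{1:T}$ (Assumption \ref{assp1}(c)) with the measurable map $H_T$. Hence its pushforward $\mu := \P \circ (\theta^*_T)^{-1}$ is a well-defined Borel probability measure on $(\T, \mathcal{E}_{\T})$. Next, since $\T$ is a Borel subset of $\R^p$ (Assumption \ref{assp4}(b)), it is a standard Borel space, and the standard measurable-transport construction applies: there exists a measurable function $f \colon [0,1] \to \T$ such that $f(U)$ has law $\mu$ whenever $U$ is uniform on $[0,1]$. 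This is precisely the content of the cited Kallenberg (1997/2002, Lemmas 3.21 and 3.22). I would then simply set $\theta^{\bullet}_T := f(U)$.

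It then remains to verify the three requirements in turn. Measurability of $\theta^{\bullet}_T$ from $(\Omegabf, \mathcal{E}_\Omegabf)$ to $(\T, \mathcal{E}_{\T})$ holds because $U$ is measurable (Assumption \ref{assp4}(c)) and $f$ is measurable, so their composition is. The equality of unconditional laws, $\P \circ (\theta^{\bullet}_T)^{-1} = \mu = \P \circ (\theta^*_T)^{-1}$, holds by the very choice of $f$. Finally, independence of $\theta^{\bullet}_T$ from $X_{1:T}$ follows because $\theta^{\bullet}_T = f(U)$ is a measurable function of $U$ alone, and $U$ is independent of $X_{1:T}$ for every $T \in \[1, \infty \[$ by Assumption \ref{assp4}(c); independence is preserved under measurable transformations of one of the two components.

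The only genuinely nontrivial ingredient is the existence of the measurable map $f$ that transports the uniform law onto the prescribed law $\mu$, and this is exactly where the standard Borel structure of $\T \subset \R^p$ is essential and where the cited lemmas are used. Everything else is definition chasing, entirely in the spirit of the paper's other proofs. I do not anticipate any real obstacle: the construction is robust, and the Euclidean restriction on $\T$ imposed at the start of Section \ref{sec4} guarantees the standard Borel property outright, so no separability or completeness hypotheses need to be added by hand.
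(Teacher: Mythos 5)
Your proposal is correct and follows essentially the same route as the paper: the paper's proof simply makes your transport map $f$ explicit as $h^{-1}\circ F_{h(\theta^*_T)}^{-1}$, where $h$ is a Borel isomorphism from $\T$ onto a Borel subset of $[0,1]$ and $F_{h(\theta^*_T)}^{-1}$ is the generalized inverse of the c.d.f. of $h(\theta^*_T)$, invoking the very same Kallenberg lemmas you cite. The verification of measurability, equality of laws, and independence from $X_{1:T}$ via Assumption \ref{assp4}(c) is identical in substance.
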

\begin{proof}
This is an application of a  known generalization of the standard inverse transform method that is used to simulate random variables from a uniform distribution (e.g.,  Kallenberg, 1997/2002, p. 56,  Lemma 3.22). By Assumption \ref{assp4}(b),  $\T$ is a Borel space, i.e., there exists a Borel measurable bijection $h: \T \rightarrow A$, with $A\in \mathcal{B}([0,1])$, s.t. $h^{-1}$ is also measurable. Denote  the c.d.f. of $h(\theta^*_T)$ with $F_{h(\theta^*_T)}$  and put $ F_{h(\theta^*_T)}^{-1}(u):=\inf_{z}\left\{ z \in A: F_{h(\theta^*_T)}(z)\geqslant u   \right\} $, $\forall u \in [0,1] $. Then, under Assumption \ref{assp4}(a), for all $B \in  \mathcal{E}_{\T} $, $\P (h^{-1}(F_{h(\theta^*_T)}^{-1}(U)) \in B) \stackrel{(a)}{=}\P( F_{h(\theta^*_T)}^{-1}(U) \in h(B) )\stackrel{(b)}{=}\P( h(\theta^*_T) \in h(B))\stackrel{(c)}{=}\P( \theta^*_T \in B) $, where \textit{(a)} and \textit{(c)} are a consequence of the bimeasurability and bijectivity of $h(.)$, and \textit{(b)} is an application of the standard inverse  transform method by Assumption \ref{assp4}(c). Now, again by Assumption \ref{assp4}(c), $U$ is independent from data. Thus, put $\theta^{\bullet}_T:=h^{-1}(F_{h(\theta^*_T)}^{-1}(U)) $.       
\end{proof}

In this paper, the definitions of neoclassical estimators and confidence regions and test
are based on the  generic proxy. To simplify their statements, we require the following assumption.

\begin{assp}\label{assp7} Denote the Borel $\sigma$-algebra on $\R$ with $\mathcal{B}(\R) $. There is a   $ \mathcal{E}_{\T}/\mathcal{E}_{\R}$-measurable p.d.f. $f_{\theta^\bullet_T}(.)$ s.t., for all $B \in  \mathcal{E}_{\T} $, $\P(\theta^\bullet_T \in B)=\int_B f_{\theta^\bullet_T}(\theta) \mu(\d \theta)$ m.a.e.,   where $\mu$ is the Lebesgue measure $\lambda$, or the counting measure $\nu$. 
\end{assp}

%
%

By the Radon-Nikodyn theorem, Assumption \ref{assp7} requires the distribution of the generic finite-sample proxy $\P \circ {\theta^\bullet_T}^{-1} $, or its approximation to be a probability measure dominated  by the Lebesgue or the counting measure. In practice, because $\P \circ {\theta^\bullet_T}^{-1} $ is unknown, it requires the approximation of $\P \circ {\theta^\bullet_T}^{-1} $ to be a probability measure dominated by  the Lebesgue or the counting measure.  Instead of requiring the existence of $f_T(.)$, we could apply the Lebesgue decomposition theorem to write m.a.e. the measure $\P \circ {\theta^\bullet_T}^{-1} $ as the sum of a continuous, a discreet and a singular measure. However, it would  complicate the upcoming definitions without much tangible gain. In particular, under Assumption \ref{assp7}, the neoclassical estimator is simply a maximizer of the p.d.f.  $f_{\theta^\bullet_T}$  m.a.e. 

\begin{defn}[Neoclassical estimator]\label{defn4} A neoclassical estimator, denoted $\check{\theta}_T$, is a maximizer of the  p.d.f.  $f_{\theta^\bullet_T}$ m.a.e.,   i.e., 
 \begin{eqnarray*}
 \check{\theta}_T \in\arg \max_{\theta \in \mathbf{\Theta}} f_{\theta^\bullet_T}(\theta) \quad  m.a.e.
 \end{eqnarray*}
 
 \end{defn}
\begin{ex}(continued)$\P\negthickspace\left(\theta^\bullet_T \negthickspace\in\negthickspace B_{r}(\theta)\right)\negthickspace =\negthickspace \frac{1}{[s_T(X_{1:T}(\omega))/\sqrt{T}]\sqrt{2 \pi}}\negthickspace  \int_{B_{r}(\theta)}\exp\negthickspace\left[ -\frac{1}{2}\negthickspace\left(\frac{\dot \theta-\overline{X}_T(\omega)}{s_T(X_{1:T}(\omega))/\sqrt{T}}\right)^2\negthickspace\right] \negthickspace\d \dot \theta$ 
m.a.e.  Thus,  the neoclassical estimate is the mode of  $\mathcal{N}\left(\overline{X}_T(\omega),  \frac{s_T(X_{1:T}(\omega))}{\sqrt{T}}  \right) $, i.e.,  $\hat{\theta}_T =\overline{X}_T(\omega)$ m.a.e.  \hfill $\diamond $
\end{ex} 

  By Definition \ref{defn4},  a neoclassical estimator is an element of  the parameter space $\T$  that has  the highest  probability density to be the generic finite-sample proxy $\theta^\bullet_T $ m.a.e. Thus, it is a maximum-probability based estimator. In the neoclassical theory,   confidence regions are also maximum-probability based.

\begin{defn}[Neoclassical confidence region]\label{defn5}  Denote the support of $f_{\theta^\bullet_T}$ with $\supp( f_{\theta^\bullet_T}) $, i.e., $\supp (f_{\theta^\bullet_T}):=\{ \theta \in \T:f_{\theta^\bullet_T}(\theta)>0\} $. A $\mathcal{B}(\mathbf{\Theta})$-measurable set, $R_{1-\alpha, T} $, is  a neoclassical confidence  region of level $1-\alpha $ with $\alpha\in[0,1]$ if, and only if,   
\begin{eqnarray*}
R_{1-\alpha, T}=\left\{ \theta \in \supp( f_{\theta^\bullet_T}) :f_{\theta^\bullet_T}(\theta) \geqslant k_{\alpha,T}\right\} \quad m.a.e.,
\end{eqnarray*} 
where $k_{\alpha,T}:= \sup_{ k\in \R}\left\{ k: \int_{\{\theta \in \T:f_{\theta^\bullet_T}(\theta)\geqslant k \} } f_{\theta^\bullet_T}(\theta) \mu(\d \theta)\geqslant 1-\alpha \right\} $.

\end{defn}

%

\begin{ex} (continued) Because a Gaussian distribution is unimodal and symmetric with respect to its mean, $R_{1-\alpha,T}=\left[\overline{X}_T(\omega)-\frac{s_T(X_{1:T}(\omega))}{\sqrt{T}}u_{1-\frac{\alpha}{2}},\overline{X}_T(\omega)-\frac{s_T(X_{1:T}(\omega))}{\sqrt{T}}u_{\frac{\alpha}{2}} \right]$ m.a.e., where $u_{\alpha/2} $ denotes the $\alpha/2$ quantile of a standard Gaussian distribution. \hfill $\diamond $
\end{ex}



\begin{rk}
 The existence of neoclassical  confidence region is typically not a concern. Appendix \ref{ap3} on p. \pageref{ap3} proves the existence of neoclassical confidence regions  under mild assumptions. \hfill $\diamond $
\end{rk}

   By construction, a neoclassical confidence region is an indicator of the confidence we can have in a neoclassical estimate. It is the set of parameter values that are the closest to being the neoclassical estimate, such that the whole set has a probability at least $1- \alpha $ to contain the generic finite-sample proxy $\theta^\bullet_T $ m.a.e.
Thus, a small connected neoclassical confidence region indicates a well-separated estimate, which is reliable. In contrast,  a large neoclassical confidence region or a neoclassical confidence region that consists of the union of disjoint sets indicates an unreliable  estimate.

\begin{rk} \label{rk5} If the purpose of confidence regions is to indicate the confidence we can have in an estimate, their neoclassical definition is more satisfactory than their Neyman-Pearson definition (inadequacies caused by   multiple
use of the  data and their past realization set aside). The Neyman-Pearson definition of  confidence regions is not about the estimate, but about  coverage. In particular,  Neyman-Pearson confidence regions do not necessarily contain the estimate.   
 \hfill $\diamond $
\end{rk}

\begin{rk} In Definition \ref{defn5}, the definition of neoclassical estimator  is formally close to the  definition of Bayesian highest  posterior density (HPD) sets (e.g., Berger, 1980/2006, sec. 4.3.2., Definition 5), although their theoretical justification and meaning are fundamentally different.  \hfill $\diamond $
\end{rk}

Although Definition \ref{defn5} corresponds to a joint  confidence region, marginal and conditional neoclassical confidence regions can also be defined by considering the marginal and conditional distribution of $\theta^{\bullet}_T $. From neoclassical confidence regions, we define neoclassical tests.

\begin{defn}[Neoclassical test] \label{defn6}  Let $\mathrm{H}: \theta_0=\dot \theta $ be a test hypothesis, and $R_{1-\alpha,T}$ a $1-\alpha$ neoclassical confidence region, where $\alpha \in [0,1]$. As in Definition \ref{defn9}, denote the  decision space  with  $\D := \left\{d_\mathrm{H},d_\mathrm{A}\right\} $.  A  neoclassical  test of level $\alpha$ for $\mathrm{H} $  is a decision rule, denoted $d_T$, s.t.  if 
\begin{eqnarray*}
 \dot \theta  \in R_{1-\alpha,T} \quad m.a.e.
    \end{eqnarray*}
 then  $d_{T}= d_\mathrm{H}$; otherwise $d_{T}= d_\mathrm{A} $.
\end{defn}


Definition \ref{defn6} leads  the econometrician to reject  hypotheses  that do not correspond to the set of parameter values with the highest probability density of being  equal to the generic proxy $\theta^{\bullet}_T$ m.a.e. By Definitions \ref{defn5}, all elements in  a neoclassical confidence region have a  higher probability density  of being equal to the generic finite-sample proxy  than the ones outside it m.a.e.

\begin{ex}(continued)  If $ \dot \theta \in \left[\overline{X}_T(\omega)-\frac{s_T(X_{1:T}(\omega))}{\sqrt{T}}u_{1-\frac{\alpha}{2}},\overline{X}_T(\omega)-\frac{s_T(X_{1:T}(\omega))}{\sqrt{T}}u_{\frac{\alpha}{2}} \right]$, then  we do not reject the test hypothesis, i.e., $d_{T}= d_{\mathrm{H}}$. Note that, in this example, the neoclassical estimate, confidence region and test are practically equivalent to their usual Neyman-Pearson counterparts, although their theoretical justification is different. Nevertheless, there are Neyman-Pearson confidence regions and tests that do not practically correspond to  neoclassical confidence regions or tests. E.g. under the assumption that data $X_{1:T}$ have not  been realized prior to the decision  to compute the confidence interval $\left]-\infty,\overline{X}_T(\omega)-\frac{s_T(X_{1:T}(\omega))}{\sqrt{T}}u_{.5+\frac{\alpha}{2}}\right] \cup \left[\overline{X}_T(\omega)-\frac{s_T(X_{1:T}(\omega))}{\sqrt{T}}u_{.5-\frac{\alpha}{2}},\infty\right[ $, the latter is a valid $1-\alpha$ Neyman-Pearson confidence region, while it is not a neoclassical  confidence region.
\qquad      \text{ }
       \hfill      $\diamond $
\end{ex}

\begin{rk} While the neoclassical definition of confidence regions appears more satisfactory than their Neyman-Pearson definition (see Remark \ref{rk5}, p. \pageref{rk5}), the reverse seems to be true for tests  (inadequacies caused by    multiple
use of the  data and their past realization set aside). Unlike  Neyman-Pearson  tests,   neoclassical tests  do not directly control the probability of making an error, so that their outcome should be understood  in terms of evidence \textit{in favor of}, or \textit{against} the hypothesis. However, it should be noted that Neyman-Pearson tests control the probability of type I error only ex ante: after computation of the test statistic, the probability of error is $0$ or  $1$. Moreover, work in progress by the authors suggest that the direct control of type I error can be regained within the neoclassical theory.   \hfill $\diamond$

\end{rk}

  \begin{rk} When $\mu = \lambda$, the precise choice of $f_{\theta^\bullet_T}$ is typically crucial for Definitions \ref{defn4}-\ref{defn6} :  a modifications of the p.d.f. $f_{\theta^\bullet_T} $ on a $\lambda$-null set yields another p.d.f. of $\P \circ {\theta^\bullet_T}^{-1} $ w.r.t. $\lambda$ m.a.e. that can lead to a completely  different estimate, confidence region and result of a test (see subsection \ref{sec5_2}). This peculiarity, from which we take advantage in subsection \ref{sec5_2} (see Remark \ref{rk4}, p. \pageref{rk4}),  also arises in Neyman-Pearson   and Bayesian theories (e.g., Gouri\'eroux and Monfort, 1989/1996, sec. 7.A.2). Nevertheless, under the mild assumption that  $\theta \in \T $ is a Lebesgue point, by Lebesgue's differentiation theorem (e.g., Folland, 1984/1999,  Theorem 3.21), $f_{\theta^\bullet_T}(\theta) $ is often  s.t. $f_{\theta^\bullet_T}(\theta)= \lim_{r \downarrow0 }\frac{\P (\theta^\bullet_T \in B_r(\theta))}{\lambda(B_r(\theta))}$ m.a.e., where $B_r(\theta) $ denotes a ball in $\T$ centered at $\theta $ with radius $r>0 $.  \hfill $\diamond $

\end{rk}

\begin{rk} As Definitions \ref{defn4}, \ref{defn5} and \ref{defn6} respectively indicate, neoclassical  estimators,  confidence regions and tests are not random m.a.e., and do not depend on the realized data m.a.e. In the examples, their dependence  on  the realized data is only due to the approximation error. \qquad      \text{ }      \hfill $\diamond $

\end{rk}

\subsection{Neoclassical theory and multiple use of the same data} \label{sec4_3}

The upcoming Theorem  \ref{thm3} investigates the adequacy of the neoclassical theory when data have already been used, and thus are known.  Because the neoclassical theory is based on the distribution of the generic proxy $\theta^{\bullet}_T$, it is sufficient to investigate the effect of   previous knowledge of the realized data on this distribution.    

\begin{thm}[Neoclassical adequacy] \label{thm3}
 Under Assumptions \ref{assp1} and \ref{assp4}, for all $B \in  \mathcal{E}_{\T} $, 
\begin{enumerate}
\item[i)] for all  $A_{T} \in \underline{\mathcal{S}}_T $, $\left\{ X_{1:T} \in A_T\right\}$ and $\left\{ \theta^\bullet_T \in B\right\}$ are independent m.a.e., i.e.,  
\begin{eqnarray*}
\P\left(\left\{ \theta^\bullet_T \in B\right\} \cap \left\{X_{1:T} \in A_T\right\}\right) = \P(\theta^\bullet_T \in B) \P(X_{1:T} \in A_T) \ m.a.e.;
\end{eqnarray*}
\item[ii)] for all $ A_T\in \underline{\mathcal{S}}_T $ s.t.  $\P  ( X_{1:T} \in A_T)>0$,  
\begin{eqnarray*}
\P(\theta^\bullet_T \in B|X_{1:T} \in A_T) = \P(\theta^\bullet_T \in B)\ m.a.e.
\end{eqnarray*}
\end{enumerate}

\end{thm}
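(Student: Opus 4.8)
The plan is to ride on the explicit construction of the generic proxy supplied by Lemma \ref{lem1}, which writes $\theta^\bullet_T = h^{-1}(F_{h(\theta^*_T)}^{-1}(U))$, where $h$ is the fixed Borel isomorphism of Assumption \ref{assp4}(b) and $F_{h(\theta^*_T)}^{-1}$ is the quantile function of the law of $h(\theta^*_T)$. The observation I would build everything on is that $F_{h(\theta^*_T)}^{-1}$ depends only on the \emph{distribution} of $\theta^*_T$, which is a fixed object, and not on the realized data; hence $u \mapsto h^{-1}(F_{h(\theta^*_T)}^{-1}(u))$ is a single nonrandom measurable map, and $\theta^\bullet_T$ is $\sigma(U)$-measurable. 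All the work has effectively been done in Lemma \ref{lem1}, so the present theorem is a short deduction from it together with Assumption \ref{assp4}(c).

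First I would prove part (i). Writing $g := h^{-1}\circ F_{h(\theta^*_T)}^{-1}$, for every $B \in \mathcal{E}_\T$ we have $\{\theta^\bullet_T \in B\} = \{U \in g^{-1}(B)\} \in \sigma(U)$. By Assumption \ref{assp4}(c) the randomization device $U$ is independent of the data, so $\sigma(U)$ and $\sigma(X_{1:T})$ are independent $\sigma$-algebras. Since $\{\theta^\bullet_T \in B\}\in\sigma(U)$ and $\{X_{1:T}\in A_T\}\in\sigma(X_{1:T})$ for every $A_T \in \underline{\mathcal{S}}_T$, the two events are independent, which is precisely the factorization asserted in (i). Part (ii) is then immediate definition-chasing: for $A_T$ with $\P(X_{1:T}\in A_T)>0$, dividing the factorization of (i) by $\P(X_{1:T}\in A_T)$ and invoking the definition of conditional probability yields $\P(\theta^\bullet_T \in B \mid X_{1:T}\in A_T) = \P(\theta^\bullet_T \in B)$ m.a.e.

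The only genuine obstacle — and the conceptual heart of the result — is verifying that the transform in Lemma \ref{lem1} does not smuggle the data back in through the quantile function. One must check carefully that $F_{h(\theta^*_T)}^{-1}$ is assembled from the \emph{law} of $\theta^*_T$ and is therefore a function that is fixed once and for all, so that $\theta^\bullet_T$ is a function of $U$ alone; any residual dependence of this map on the realization $X_{1:T}(\omega)$ would reintroduce a coupling and collapse the whole argument. Once this is pinned down, both claims reduce to the elementary fact that a measurable function of a random variable independent of $X_{1:T}$ is itself independent of $X_{1:T}$.

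I would also flag, by way of emphasis, why the ``m.a.e.'' qualifier is essentially innocuous here. The independence in (i) is structural: it relies solely on $\theta^\bullet_T$ being a function of $U$, and $U$ is independent of the data by construction. Approximating the unknown law $\P\circ{\theta^*_T}^{-1}$ (hence using an approximate quantile function in place of $F_{h(\theta^*_T)}^{-1}$) changes \emph{which} distribution $\theta^\bullet_T$ carries, but leaves it a deterministic function of $U$ alone. Consequently the factorization of (i), and therefore (ii), survives the approximation error unchanged, which is exactly the immunity to multiple use of the same data that the theorem is meant to record.
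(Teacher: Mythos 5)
Your proposal is correct and follows essentially the same route as the paper: the paper's proof also reduces everything to Lemma \ref{lem1} (which already establishes that $\theta^\bullet_T$ is independent of $X_{1:T}$ precisely because it is a measurable function of $U$ alone), deduces (i) from the independence of the generated $\sigma$-algebras, and obtains (ii) by the same division step used in Theorem \ref{thm1}. Your extra unpacking of the quantile-transform construction is a more explicit rendering of what Lemma \ref{lem1} already delivers, not a different argument.
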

\begin{proof}It is a consequence of Lemma  \ref{lem1}. \textit{i)} By Lemma \ref{lem1}, $ \theta^\bullet_T$ and $X_{1:T} $  are independent, so that all events in $\sigma(\theta^\bullet_T )$ and $\sigma(X_{1:T})$ are independent (e.g., Kallenberg, 1997/2002, p. 50). \textit{ii) }Using (i), replace in the proof of Theorem \ref{thm1}, the nonequal sign by an equal sign, and set $E=\left\{\theta^\bullet_T \in B \right\}$. 
 \end{proof}
Theorem \ref{thm3} shows that the  distribution of the generic proxy $\theta^{\bullet}_T$ is immune to previous knowledge (or realization) of the data. Then, the inadequacy of neoclassical confidence regions and tests  follow.

\begin{cor}[Neoclassical confidence region and test adequacy] \label{cor1} 
Let  $R_{1-\alpha,T} $ be a neoclassical  $1-\alpha $ confidence region for $\theta_0$. Under Assumptions \ref{assp1} and \ref{assp4}, for all $ A_T\in \underline{\mathcal{S}}_T $ s.t.  $\P  ( X_{1:T} \in A_T) >0$, 
\begin{eqnarray*}
 \displaystyle\P( \theta^\bullet_T \in R_{1-\alpha,T}|X_{1:T} \in A_T) = \P(\theta^\bullet_T \in R_{1-\alpha,T}) \text{ m.a.e.} 
\end{eqnarray*} 

\end{cor}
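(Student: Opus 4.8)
The plan is to treat this corollary as an immediate specialization of Theorem \ref{thm3}(ii), exactly as the author's framing suggests. The only genuine content is to certify that the neoclassical confidence region $R_{1-\alpha,T}$ is a \emph{fixed} (nonrandom) Borel subset of $\T$ m.a.e., so that it may legitimately be substituted for the fixed set $B$ in Theorem \ref{thm3}(ii). Once that is established, the equality follows by a one-line substitution.

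First I would unfold Definition \ref{defn5}. By that definition, $R_{1-\alpha,T} = \{ \theta \in \supp( f_{\theta^\bullet_T}) : f_{\theta^\bullet_T}(\theta) \geqslant k_{\alpha,T} \}$ m.a.e., where both the superlevel set and the cutoff $k_{\alpha,T}$ are determined entirely by the density $f_{\theta^\bullet_T}$ of the generic proxy (the threshold $k_{\alpha,T}$ being defined through integrals of $f_{\theta^\bullet_T}$ against $\mu$). By Lemma \ref{lem1}, $\theta^\bullet_T$ has the same unconditional distribution as $\theta^*_T$ while being independent of $X_{1:T}$; consequently $f_{\theta^\bullet_T}$ is a density of the unconditional law $\P \circ {\theta^\bullet_T}^{-1}$, which does not depend on the realized data m.a.e. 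Hence $R_{1-\alpha,T}$ is a nonrandom element of $\mathcal{E}_{\T}=\mathcal{B}(\T)$ m.a.e., and it is measurable by Definition \ref{defn5}.

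Having identified $R_{1-\alpha,T}$ as a fixed $B \in \mathcal{E}_{\T}$ m.a.e., I would simply invoke Theorem \ref{thm3}(ii) with $B := R_{1-\alpha,T}$: for every $A_T \in \underline{\mathcal{S}}_T$ with $\P(X_{1:T} \in A_T) > 0$ one obtains $\P(\theta^\bullet_T \in R_{1-\alpha,T}\,|\,X_{1:T} \in A_T) = \P(\theta^\bullet_T \in R_{1-\alpha,T})$ m.a.e., which is exactly the displayed claim. The test part is even simpler: by Definition \ref{defn6} the neoclassical decision $d_T$ is a deterministic function of whether the fixed point $\dot\theta$ lies in the fixed set $R_{1-\alpha,T}$ m.a.e., so conditioning on $\{X_{1:T} \in A_T\}$ leaves it unchanged m.a.e.

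The main obstacle — indeed the only substantive point — is the assertion that $R_{1-\alpha,T}$ is nonrandom m.a.e., which is where the m.a.e. qualification does all the work. The \emph{approximation} of $f_{\theta^\bullet_T}$ used in practice (for instance the Gaussian density centered at $\overline{X}_T(\omega)$ in the running example) genuinely depends on the realized data, but modulo approximation error one treats this approximation as the exact unconditional density of $\theta^\bullet_T$, which is data-independent. Thus the crux is not a calculation but the correct bookkeeping of the ``m.a.e." convention, ensuring that $R_{1-\alpha,T}$ enters Theorem \ref{thm3}(ii) as a fixed set rather than as a data-dependent random set, for which the clean independence conclusion could fail.
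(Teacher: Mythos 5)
Your proposal is correct and takes essentially the same route as the paper, whose entire proof is ``Apply Theorem \ref{thm3}(ii) putting $B=R_{1-\alpha,T}$.'' The extra care you take in certifying that $R_{1-\alpha,T}$ is a fixed element of $\mathcal{E}_{\T}$ m.a.e.\ (so that it can legitimately play the role of the nonrandom set $B$) is a sensible elaboration of the same argument rather than a different approach.
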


\begin{proof} Apply Theorem \ref{thm3}(ii) putting  $B=R_{1-\alpha,T} $. \end{proof}

\begin{ex}(continued)  For clarity, we  now explicitly distinguish between the fixed  $\omega \in \Omegabf$ due to the approximation error and the random elements of the sample space. We denote the latter ones with $\tilde{\omega} $.  By Theorem \ref{thm3} (i), m.a.e.,
\begin{eqnarray*} 
 & &\ \P \left\{\left.\tilde{\omega}\in \Omegabf:\theta^\bullet_T(\tilde{\omega}) \in B\right|\tilde{\omega}\in \Omegabf:X_{1:T}(\tilde{\omega}) \in A_{T}\right\}\\
& = & \P(\theta^\bullet_T \in B_{r}(\theta)|X_{1:T} \in A_{T})=\P(\theta^\bullet_T \in B) = \P \left\{\tilde{\omega}\in \Omegabf:\theta^\bullet_T(\tilde{\omega}) \in  B\right\}\\
& = &  \frac{1}{[s_T(X_{1:T}(\omega))/\sqrt{T}]\sqrt{2 \pi}}\int_{ B}\exp\left[ -\frac{1}{2}\left(\frac{\dot \theta-\overline{X}_T(\omega)}{s_T(X_{1:T}(\omega))/\sqrt{T}}\right)^2\right] \lambda (\d \dot \theta)
\end{eqnarray*}
 By Corollary \ref{cor1}, m.a.e., 
\begin{eqnarray*}
& &\negthickspace \negthickspace \negthickspace  \negthickspace \negthickspace \P \left\{\left.\negthickspace\tilde{\omega}\negthickspace\in \Omegabf\negthickspace:\negthickspace\theta^\bullet_T(\tilde{\omega})\negthickspace \in\negthickspace \left[\overline{X}_T(\omega) \negthickspace- \negthickspace\frac{s_T(X_{1:T}(\omega))}{\sqrt{T}}u_{1-\frac{\alpha}{2}},\overline{\negthickspace X}_T(\omega)\negthickspace-\negthickspace\frac{s_T(X_{1:T}(\omega))}{\sqrt{T}}u_{\frac{\alpha}{2}} \right]\right|\tilde{\omega\negthickspace}\in \negthickspace\Omegabf\negthickspace:\negthickspace X_{1:T}(\tilde{\omega}) \negthickspace\in \negthickspace A_{T} \negthickspace\right\} \\
& = & \P( \theta^\bullet_T \in  R_{1-\alpha,T}|X_{1:T} \in  A_{T})=  \P(\theta^\bullet_T \in  R_{1-\alpha,T})\\
& = &\P \left\{\tilde{\omega}\in \Omegabf:\theta^\bullet_T(\tilde{\omega}) \in \left[\overline{X}_T(\omega)-\frac{s_T(X_{1:T}(\omega))}{\sqrt{T}}u_{1-\frac{\alpha}{2}},\overline{ X}_T(\omega) -\frac{s_T(X_{1:T}(\omega))}{\sqrt{T}}u_{\frac{\alpha}{2}} \right]\right\}
%
\end{eqnarray*} 
The distinction between the fixed $\omega $ and the varying $\tilde{\omega} $ is essential. The fixed $\omega $ can be ignored as long as the approximation error  is negligible, i.e., the approximation is justified.
 \hfill $\diamond $ \end{ex}

\begin{rk}

Corollary \ref{cor1}(ii) does not mean or imply that, for all  $A_{T} \in \underline{\mathcal{S}}_T $,   $ \lim_{T \rightarrow \infty}\P(\theta^\bullet_T \in R_{1 -\alpha,T}| X_{1:T} \in A_{T}) = \lim_{T \rightarrow \infty}\P(\theta^\bullet_T \in R_{1 -\alpha,T})$. First,  in the neoclassical theory,  approximation errors  do not necessarily come from asymptotic approximation (see Remark \ref{rk1}). Second, even in the case, in which the whole approximation error would come from an asymptotic approximation, Corollary \ref{cor1}(ii)  only implies independence between  $X_{1:\infty}$ and any neoclassical confidence region $R_{1-\alpha} $ deduced from   $ \lim_{T \rightarrow \infty} \hP  $, where $\hP$ denotes an approximation of the distribution of the generic proxy $\theta^\bullet_T$. E.g., in the Example, $\hP \stackrel{d}{\sim}\mathcal{N}\left(\overline{X}_T(\omega), \frac{s_T(X_{1:T})}{\sqrt{T}}\right)$, so that   $\lim_{T \rightarrow \infty}\hP= \delta_{\theta_0} $ $\P$-a.s., which, in turn, implies,   $R_{1-\alpha, \infty}=\{ \theta_0\} $ $\P$-a.s. Therefore,  $\{\tilde{\omega}\in \Omegabf:\theta_0 \in R_{1-\alpha, \infty}\}=\{\tilde{\omega}\in \Omegabf:\theta_0 \in \{ \theta_0\}\} $  has probability one, and  is independent from the data  $X_{1:\infty}$. Note that, in the Neyman-Pearson theory, we would need to consider  $\lim_{T \rightarrow \infty}\P\left\{ \theta_0 \in \left[\overline{X}_T-\frac{s_T(X_{1:T})}{\sqrt{T}}u_{1-\frac{\alpha}{2}},\overline{X}_T-\frac{s_T(X_{1:T})}{\sqrt{T}}u_{\frac{\alpha}{2}} \right] \right\}=1-\alpha$, where $ \left[\overline{X}_T-\frac{s_T(X_{1:T})}{\sqrt{T}}u_{1-\frac{\alpha}{2}},\overline{X}_T-\frac{s_T(X_{1:T})}{\sqrt{T}}u_{\frac{\alpha}{2}} \right] $  depends on the data, and is random even asymptotically. This difference between the two theories should help to understand why, in the Example,  the same finite-sample confidence interval depends on the data m.a.e. for the Neyman-Pearson theory, while it is does not depend on the data m.a.e. for   the neoclassical theory.   
    \hfill $\diamond $
\end{rk}


\begin{rk} In this paper, the generic proxy $\theta^\bullet_T$ is introduced for expository purpose, i.e., to allow the use of probability symbolism. From a strict logical point of view, the immunity of the unconditional distribution of $\theta^*_T $ to multiple use of the same data is all that is needed for the neoclassical adequacy: by definition the unconditional distribution of $\theta^*_T $ is about all the possible values of $\theta^*_T $  induced by all the possible samples that could have been observed.   In other words, the key difference between the  Neyman-Pearson and Bayesian theories  on the one hand,  and  the neoclassical theory on the other hand is that, in the latter, inference exclusively relies on a unconditional distribution m.a.e., while, in  the other, inference relies  on the  realized data, even m.a.e.    
\hfill $\diamond $ 
\end{rk}

The probabilistic statements, on which  neoclassical estimators, confidence regions and tests are based, are immune to previous information about the data, and thus to multiple use of the same data m.a.e. To our knowledge, the neoclassical theory  is the  first general inference theory immune to multiple use of the same data m.a.e.

%
%
%
%
%

\section{A neoclassical point of view on some calibration and econometric practices.} \label{sec5}

This section aims at presenting  some prominent  practices from the point of view of the neoclassical inference theory. The elementary version of the  theory outlined in the subsection \ref{sec4_2} is    sufficient for this purpose. By-products of  the current section are examples of implementation of the neoclassical theory, novel theoretical justifications for the presented calibration and econometric practices, and a standard-error adjustment to account for approximation errors.  


Subsection \ref{sec5_1} discusses requirements for proxies and approximations of  their distribution. Subsection \ref{sec5_2} presents choices of proxies and of  approximations  that correspond to  different econometric and calibration practices.  
 Subsection \ref{sec5_3} assesses the most common econometric practice  through Monte-Carlo simulations, and presents the standard-error adjustment. Because, in this section, we discuss the choice of approximations, we  distinguish between the distribution of the generic proxy  $\P \circ {\theta^\bullet_T}^{-1} $, and its chosen approximation, which we denote $\hP(.)=\int_. \hat{f}_{\theta^\bullet_T}(\theta)\mu(\d \theta) $.

\subsection{On generic proxies and approximations of their distribution}\label{sec5_1}
An implementation of the neoclassical theory requires two inputs: a generic proxy and  an approximation of its distribution.  These inputs do not have to satisfy any particular criteria other than being considered a proxy of $\theta_0 $,  and an approximation of $\P \circ {\theta^\bullet_{T}}^{-1} $, respectively. In particular, the neoclassical theory does not require consistency of any of the two: consistency is about situations where the number of observations can be
infinitely increased, while practice  is necessarily based on
a bounded number of them. Nevertheless,   hereafter, except in the subsection \ref{sec5_2_1} about calibration, we focus on asymptotically normal proxies and consistent approximations, so that we can rely on insights from the asymptotic theory: the proxy typically  corresponds to  what is called an estimator in the  Bayesian or Neyman-Pearson theory. The following Assumption \ref{assp9} requires asymptotic normality of $\theta^\bullet_T $, which is a property of most estimators considered in the Neyman-Pearson and  Bayesian theories (e.g., Chernozhukov and Hong, 2003). 

 \begin{assp}[Asymptotic normality of $\theta^\bullet_T $] \label{assp9} The   generic proxy of $\theta_0 $ is asymptotically normal, i.e., (by  Assumption \ref{assp4}(c))  there exist a random variable $\xi^\bullet $, and a sequence of random variables $ (R^\bullet_{T})_{T =1}^\infty$ on $(\Omegabf,\mathcal{E}_{\Omegabf}) $  s.t.   
 \begin{eqnarray*}
\theta^\bullet_T=\theta_0+\frac{\xi^\bullet}{T^{1/2}}+R^\bullet_{T} \label{}
\end{eqnarray*}
where  $\P \circ {\xi^\bullet}^{-1} \stackrel{d}{\sim} \mathcal{N}(0, \Sigma^{\frac{1}{2}})$, and $R^\bullet_{T} = o_{\P}(T^{-1/2})$,  as $T \rightarrow  \infty $.
\end{assp}
    Assumption \ref{assp9} means that the generic proxy $\theta^\bullet_T $ asymptotically converges to $\theta_0$ as a Gaussian random variable centered at $\theta_0$ with a standard deviation that goes to zero at rate $\sqrt{T}$.  We could weaken Assumption \ref{assp9} to allow rates of convergence different from $\sqrt{T}$, or to allow different distributions for  ${\xi^\bullet} $ (e.g., Dickey-Fuller distributions), but it would complicate the presentation.
The following Assumption \ref{assp11} requires the approximation of the distribution of  the generic proxy to be consistent.

\begin{assp}[Consistency of $\hP $] \label{assp11} The approximation of the distribution of the generic proxy is consistent, i.e.,   as $T \rightarrow \infty $, 
\begin{eqnarray*}
\rho\left(\hP, \P\circ {\theta^\bullet_T}^{-1}\right) \stackrel{\P}{\rightarrow} 0, \qquad 
\end{eqnarray*}
where  $\rho(.,.) $ denotes a metric on  the space of probability measures on $(\T,  \mathcal{E}_{\T})$. 
 
\end{assp}
 Assumption \ref{assp11} means that the distribution of the generic proxy and its approximation converge to each other as the number of observations increases.  In Appendix \ref{ap7}, we verify Assumption \ref{assp11} for the approximations considered in this paper. In practice, the distribution of the generic proxy, $\P\circ {\theta^\bullet_T}^{-1} $, is typically unknown, so that Assumption \ref{assp11} cannot be directly verified. Nevertheless, the asymptotic limit of $\P \circ {\theta^\bullet_T}^{-1} $ is often known, so that the following lemma provides  a usable criterion for checking Assumption \ref{assp11}.

\begin{lem} \label{lem4}  Under Assumptions \ref{assp1} and \ref{assp4}, if
there exists a probability measure $\P\circ {\theta^\bullet_\infty}^{-1}  $ on $(\T,  \mathcal{E}_{\T})$ s.t., as $T \rightarrow \infty $,
\begin{enumerate}
\item[(a)] $\rho \left(\hP,  \P\circ {\theta^\bullet_\infty}^{-1} \right) \stackrel{\P}{\rightarrow} 0  $   and 
\item[(b)] $\rho( \P\circ {\theta^\bullet_\infty}^{-1} , \P\circ {\theta^\bullet_T}^{-1})  \stackrel{\P}{\rightarrow} 0 $,
\end{enumerate}
then $\hP $ is a consistent approximation of $\tP $, i.e., Assumption \ref{assp11} holds.  \end{lem}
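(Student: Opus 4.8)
The plan is to prove Lemma \ref{lem4} by a straightforward triangle-inequality argument, since the hypotheses (a) and (b) are precisely set up to bound the two pieces into which the target quantity $\rho(\hP, \P\circ{\theta^\bullet_T}^{-1})$ naturally splits. The metric $\rho$ on the space of probability measures satisfies the triangle inequality by definition of a metric, and the auxiliary measure $\P\circ{\theta^\bullet_\infty}^{-1}$ serves as the intermediate point.

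**First I would** write, for each $T$,
\begin{eqnarray*}
\rho\left(\hP, \P\circ{\theta^\bullet_T}^{-1}\right) \leqslant \rho\left(\hP, \P\circ{\theta^\bullet_\infty}^{-1}\right) + \rho\left(\P\circ{\theta^\bullet_\infty}^{-1}, \P\circ{\theta^\bullet_T}^{-1}\right),
\end{eqnarray*}
invoking the triangle inequality for $\rho$. Both terms on the right-hand side are nonnegative random variables (they depend on the realized data through $\hP$ in the first term, and are deterministic or data-dependent as the case may be in the second). **Next I would** fix an arbitrary $\eps > 0$ and observe that the event $\{\rho(\hP, \P\circ{\theta^\bullet_T}^{-1}) > \eps\}$ is contained in the union of the events where each summand exceeds $\eps/2$; hence
\begin{eqnarray*}
\P\left(\rho\left(\hP, \P\circ{\theta^\bullet_T}^{-1}\right) > \eps\right) \leqslant \P\left(\rho\left(\hP, \P\circ{\theta^\bullet_\infty}^{-1}\right) > \tfrac{\eps}{2}\right) + \P\left(\rho\left(\P\circ{\theta^\bullet_\infty}^{-1}, \P\circ{\theta^\bullet_T}^{-1}\right) > \tfrac{\eps}{2}\right).
\end{eqnarray*}
By hypothesis (a), the first probability tends to $0$ as $T \rightarrow \infty$, and by hypothesis (b), the second probability tends to $0$ as well. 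Therefore the left-hand side tends to $0$, which is exactly the statement that $\rho(\hP, \P\circ{\theta^\bullet_T}^{-1}) \stackrel{\P}{\rightarrow} 0$, i.e., Assumption \ref{assp11} holds.

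**The only point requiring care** — and the closest thing to an obstacle, though it is minor — is the measurability of the map $\omega \mapsto \rho(\hP, \P\circ{\theta^\bullet_\infty}^{-1})$, so that the probabilities written above are well-defined. This is guaranteed because $\hP$ is constructed from the data $X_{1:T}$ (a measurable mapping under Assumption \ref{assp1}), $\rho$ is a metric hence continuous in its arguments, and the m.a.e. convention (Remark \ref{rk1}) permits passing to outer probabilities in the nonmeasurable case exactly as in Definitions \ref{defn2} and \ref{defn9}. Since hypotheses (a) and (b) are themselves stated in terms of convergence in probability, they already presuppose the requisite measurability, so no additional work is needed. The proof is thus essentially a one-line triangle inequality followed by the union bound, and I would present it as such.
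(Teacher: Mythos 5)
Your proof is correct and follows essentially the same route as the paper's: a single triangle inequality through the intermediate measure $\P\circ{\theta^\bullet_\infty}^{-1}$, with hypotheses (a) and (b) killing the two resulting terms. The paper states this in one line; your added union-bound and measurability remarks merely spell out standard details the paper leaves implicit.
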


\begin{proof} Triangle inequality yields  $ \rho\left(\hP, \P\circ {\theta^\bullet_T}^{-1}\right)\leqslant   \rho(\hP, \P\circ {\theta^\bullet_\infty}^{-1} )+ \rho(\P\circ {\theta^\bullet_\infty}^{-1}, \P\circ {\theta^\bullet_T}^{-1})$, where the two terms of the RHS go to zero in probability as $T \rightarrow \infty$ by (a) and (b), respectively.
\end{proof}

\begin{ex} (continued) Let $\rho $ be the Prokhorov metric on the space of probability measures on $(\T,  \mathcal{E}_{\T}) $.  The Prokhorov metric generates the topology of the convergence in law (e.g., Billingsley, 1968/1999, pp. 72--73), which, in turn, corresponds to the point-wise convergence of cumulative distribution functions (c.d.f.) at continuity points of the limiting c.d.f. (Portmanteau theorem). Denote the c.d.f. of the Gaussian distribution $\mathcal{N}(\tau, s)$ with $\mathfrak{N}(.; \tau; s)$. Then,  for all $\theta \in \T \setminus \{\theta_0 \} $, $\lim_{T \rightarrow \infty}\mathfrak{N}(\theta; \theta_0; \frac{s}{\sqrt{T}})=\ind_{[\theta_0, \infty[}(\theta)$, because $\mathfrak{N}(\theta; \theta_0; \frac{s}{\sqrt{T}})=\mathfrak{N}(\sqrt{T}\frac{\theta -\theta_0 }{s};0; 1) $, and $\lim_{T \rightarrow \infty}\sqrt{T}\frac{\theta -\theta_0}{s}=-\infty $, if $\theta<\theta_0 $, and $ \infty$ otherwise. Similarly,  for all $\theta \in \T \setminus \{\theta_0 \} $,  $\lim_{T \rightarrow \infty}\mathfrak{N}(\theta; \overline{X}_T; \frac{s_T(X_{1:T}(\omega))}{\sqrt{T}})=\ind_{[\theta_0, \infty[}(\theta) $, and $\lim_{T \rightarrow \infty}\ind_{[\overline{X}_T, \infty[}(\theta)=\ind_{[\theta_0, \infty[}(\theta) $ $\P$-a.s. (see also Appendix \ref{ap7_2_1}, Proposition \ref{prop5} on p. \pageref{prop5})   Thus, by Lemma \ref{lem4},  $\delta_{\overline{X}_T(\omega)} $ and $\mathcal{N}\left(\overline{X}_T(\omega), \frac{s_T(X_{1:T}(\omega))}{\sqrt{T}}  \right) $ are   consistent approximations of the distribution of the generic proxy $\overline{X}_T^{\bullet}$, $\mathcal{N}(\theta_0, \frac{s}{\sqrt{T}}) $.
\hfill $\diamond $

\end{ex}

 \begin{rk}
 
 Unlike for Neyman-Pearson and Bayesian theories,  implementations of the elementary version of the neoclassical inference theory presented in this paper seem to always rely on an approximation, the
approximation of the distribution of the generic proxy. This is a disadvantage   of the elementary version of the neoclassical theory.  However, firstly, most of econometric practices  rely on approximations: implementation of the  Neyman-Pearson and Bayesian theories  typically requires asymptotic (e.g. CLT) or computational (e.g., Markov Chain Monte Carlo  algorithms) approximations.  Secondly,  in some particular cases, we can bound the approximation error in probability, or even   derive the exact distribution of $\hP $, and of $\rho(\hP, \tP) $ : see Appendix \ref{ap8} on p. \pageref{ap8}. Thirdly, in practice, there is a trade-off between approximations and the approximative assumptions that  are needed to avoid the (explicit) approximations.
\hfill $\diamond $ \end{rk}

\subsection{Some practices in neoclassical terms} \label{sec5_2} In this subsection, we frame some calibration and econometric practices in terms of the neoclassical theory, so that they are provided with a theoretical foundation  immune to multiple use of the same data m.a.e. For convenience and brevity,  we present the practices according to the kind of approximations of $\tP $ they rely on. Thus, practices that combine different approximations (e.g., mix of calibration and econometrics in  Canova, 2007, chap. 7) are only indirectly treated. Appendix \ref{ap7}  studies the asymptotic limit of
the approximations considered in this subsection, while Table \ref{tab1} in Appendix \ref{ap9} on p. \pageref{tab1}  provides a panoramic view of them. 


\subsubsection{Model-calibration approximations}\label{sec5_2_1}
 By calibration, we mean the  more or less formal process   through which  the parameter values   of a model are selected in view of data.   In finance and economics, this process  often correspond to  the minimization of some goodness-of-fit measure, or to the choice of estimates from various existing empirical studies.      While model calibration is used in many fields (e.g., Oreskes, Shrader-Frechette and  Belitz, 1994), it has become common in economics and finance with the development of general-equilibrium models (Johansen, 1960; Kydland and Prescott, 1982; Shoven and Walley, 1984) and  derivatives pricing, respectively. We distinguish two types of calibration: plain calibration and criterion-adjusted calibration.

\emph{Plain calibration.}   In  plain calibration, the selected parameter values are just plugged in the model in lieu of the unknown parameter $ \theta_0$.  No  information about potential calibration error or model-specification error is reported.    Plain calibration is widely used to price derivatives in finance (e.g., Cont, 2010, p. 1217). If the selected parameter values are assumed to be a realization of  a random variable $\theta^*_{T,C} $,  under Assumptions \ref{assp1} and \ref{assp4}, plain calibration can be regarded as an implementation of the neoclassical theory s.t. 
\begin{itemize}
\item  the generic proxy $\theta^\bullet_{T,C}$ is a random variable that has the same unconditional distribution as $\theta^*_{T,C} $, i.e., $\theta^\bullet_{T,C}=F^{-1}_{\theta^*_{T,C}}(U_{C}^\bullet)$ where  $F^{-1}_{\theta^*_{T,C}} $ denotes the inverse of the unconditional c.d.f. of $\theta^*_{T,C} $, and $U_{C}^\bullet $ a random variable uniformly distributed on $[0,1] $ that is independent from the data; 
\item the approximation of the unconditional distribution of the proxy is the unit point mass at the selected parameter value $\theta^*_{T,C}(\omega) $, i.e., for all $\theta \in \T $, 
\begin{eqnarray*}
\hat{f}_{\theta^\bullet_T}(\theta)= \ind_{\{ \theta^*_{T,C}(\omega)\}}(\theta) \text{ with }\mu=\nu.
\end{eqnarray*}
\end{itemize}
Then, by Definitions \ref{defn4} and \ref{defn5}, both the neoclassical estimate and confidence region correspond to the calibrated value, i.e., $\check{\theta}_T=\theta^*_{T,C}(\omega) $ and  $R_{1-\alpha,T}= \{\theta^*_{T,C}(\omega) \} $ m.a.e. 
If the selected parameter value is  close to $\theta_0 $  (e.g., the proxy $\theta^*_{T,C} $ is consistent and  the number of observations $T $ is large as in Proposition \ref{prop3}(i) in Appendix \ref{ap7_1}), plain calibration may be sufficient. However,  in economics and finance,  this is not often the case, so that indication of the potential calibration error and model-specification error are often needed.

\emph{Criterion-adjusted calibration.} By criterion-adjusted calibration, we mean  a plain calibration  accompanied  by indications of calibration error and model-specification error based on  nonstatistical criteria.  The indication of calibration error comes from the determination of a range of plausible values for the model parameter. The indication of model-specification error comes from the computation of measures of discrepancy  between the calibrated model and the data (e.g., difference between moments of the calibrated model and moments of the data).

To cast criterion-adjusted calibration as an implementation of the neoclassical theory, it is useful to introduce new notation.  We denote the model  parameter and the measures of discrepancy with $\beta $ and  $\Delta $, respectively. We also denote the selected value for $\beta $ with   $\beta^*_{T,C} $, and  the  measure of discrepancy  implied by $\beta^*_{T,C} $ with $\Delta_{T,C}^* $. The parameter $\beta$ of the model of interest are not to be confused with the global parameter $\theta:=(\beta' \quad \Delta')' $ .   The  determination of the range of plausible values for $\beta$ and  acceptable values for $\Delta$ can be formalized by a positive criterion function $u: (\theta, \dot \theta)\mapsto u(\theta, \dot \theta) $, which indicates the  adequacy between its two arguments, and which equal zero outside $\T^2 $. The criterion function $u $ is maximized when its two arguments are equal, i.e., for all $\dot\theta \in \mathbf{\Theta}$ and $\theta \in \mathbf{\Theta} \setminus \{\dot \theta\} $,  $u(\theta, \dot \theta) \leqslant u(\dot\theta,\dot\theta) $. With this notation, and under the assumåption that the following quantities exist,  criterion-adjusted calibration  can be regarded as an implementation of the neoclassical theory s.t.  
\begin{itemize}
\item the generic proxy is  $\theta^{\bullet}_{T,CC}:=F^{-1}_{\theta^\bullet_{T,CC}|\theta^{\bullet}_{T,C}}(U^{\bullet}_{CC}) $, where $F_{\theta^\bullet_{T,CC}|\theta^{\bullet}_{T,C}} $ is a conditional  c.d.f. s.t. $F_{\theta^\bullet_{T,CC}|\theta^{\bullet}_{T,C}}(.):=\int_{-\infty}^. \frac{u(\theta, \theta^\bullet_{T,C})}{\int_\T u( \dot \theta, \theta^\bullet_{T,C})\lambda( \d \dot \theta)}\lambda(\d \theta)$ with  $ $ $]-\infty, \theta]:=]-\infty, \theta_1]\times ]-\infty, \theta_2]\times \cdots \times ]-\infty, \theta_p] $ for $( \theta_1 \ \theta_2 \ \theta_3 \ \cdots \ \theta_p)':=\theta\in \T $, and where $U^{\bullet}_{CC} $ is a random variable uniformly distributed on $[0,1] $ that is independent from the data and $\theta^{\bullet}_{T,C} $;

   \item  the approximation of the distribution of the  generic proxy $\theta^{\bullet}_{T,CC} $ is the normalized criterion function, i.e., for all $\theta \in \T $,   
\begin{eqnarray*}
\hat{f}_{\theta^\bullet_T}(\theta) \propto u(\theta, \theta^*_{T,C}(\omega)) \text{ with }\mu=\lambda.
\end{eqnarray*}
\end{itemize}

\subsubsection{Gaussian approximation} The Gaussian approximation is one of the most-widely used approximations. Under the assumption that the unconditional distribution of  the $t$-statistic  $\sqrt{T}{\hat{\Sigma}_T}^{-\frac{1}{2}}(\theta^*_{T,G}-\theta_0)$ converges asymptotically to a standard Gaussian  $\mathcal{N}(0,I) $, econometricians typically deduce univariate confidence regions and sets of nonrejected point hypotheses, $\left[\theta^*_{T, G, k}-\frac{\hat s_{T, G,k,k}}{\sqrt{T}}u_{1-\alpha/2},\right.$ $\left.\theta^*_{T, G, k}-\frac{\hat s_{T, G,k,k}}{\sqrt{T}} u_{\alpha/2}\right]$, where $\theta^*_{T, G, k}$, ${\hat s_{T, G,k,k}}^2$ and $u_{\alpha/2}$, respectively,  denote the $k$-th  element of the random vector $\theta^*_{T,G} $, the $k$-th diagonal element of the matrix $\hat\Sigma_T $, and the  $\alpha/2$ quantile of a standard univariate Gaussian $\mathcal{N}(0,1)$. As in the Example of this paper,  such practice can be  regarded as an implementation of the neoclassical theory s.t.
\begin{itemize}
\item  the generic proxy $\theta^\bullet_{T,G}$ is a random variable that has the same unconditional distribution as $\theta^*_{T,G} $, i.e., $\theta^\bullet_{T,G}=F^{-1}_{\theta^*_T}(U_{G}^\bullet)$ where  $F^{-1}_{\theta^*_{T,G}} $ denotes the inverse of the unconditional c.d.f. of $\theta^*_{T,G} $, and $U_{G}^\bullet $ a random variable uniformly distributed on $[0,1] $ that is independent from the data; 
\item the approximation of the unconditional distribution of the proxy is the Gaussian distribution centered at $\theta^*_{T,G}(\omega) $ with variance-covariance matrix the diagonal matrix of the diagonal elements of $\hat{\Sigma}_T $, i.e., for all $\theta \in \T $, 
\begin{eqnarray}
\hat{f}_{\theta^\bullet_T}(\theta) \propto \exp\left[-\frac{T}{2}(\theta-\theta^*_{T,G}(\omega))'\diag(\hat{\Sigma}_T(\omega))^{-1}(\theta-\theta^*_{T,G}(\omega)) \right] \label{eq6} 
\end{eqnarray}
with $\mu=\lambda$, and where $\diag(\hat{\Sigma}_T(\omega))$ is the diagonal matrix that has the same diagonal elements as $ \hat{\Sigma}_T(\omega)$.
 \end{itemize}  
In the Appendix \ref{ap7}, we show that   the approximation (\ref{eq6}) is  consistent under Assumptions \ref{assp1}, \ref{assp4} and  \ref{assp9}. (see Proposition \ref{prop5} in Appendix \ref{ap7_2}) 


\subsubsection{Laplace  approximation and ``Bayesian" practice }\label{secLap} In most econometric practices, the unknown parameter $\theta_0 $ is approximated by a  maximizer $\theta^*_{T,L} $ of a  converging objective function $ Q_T(X_{1:T}, \theta)$, i.e., 
\begin{eqnarray}
\theta^*_{T,L} \in\arg \max_{\theta \in \T} Q_T(X_{1:T}, \theta) \label{eq7}
\end{eqnarray}
where, as $T \rightarrow \infty $, $\sup_{\theta \in \T} \Vert Q_T(X_{1:T}, \theta)-Q(\theta)\Vert=o_\P(1) $ $ $ with $\theta_0 = \arg \max_{\theta \in \T} Q(\theta)$.   See Bierens (1981), Amemiya (1985, chap. 4), Gallant and White (1988), Newey and McFadden (1994),   and P\"otscher and Prucha (1997), all of which follow from earlier contributions by  Wald (1950), Malinvaud (1964/1970, chap. 9; 1970), and Jennrich (1969).   
If $TQ_T(X_{1:T}, \theta) $ is a log-likelihood $L_T(X_{1:T}, \theta) $, for all $x_{1:T} \in\underline{\Sbf}_{T} $ and $\theta \in \T $,  the function  \begin{eqnarray}
(x_{1:T}, \theta)\mapsto \e^{L_T(x_{1:T},\theta)}
\label{eq5}  \end{eqnarray} 
is numerically equal to the Bayesian distribution of the data $X_{1:T} $ conditional on $\theta_0 $, denoted $\pi_{X_{1:T}|\theta_0}(x_{1:T}|\theta) $  (see subsection \ref{sec3_1}).
Thus, by analogy,
often motivated by the Laplace approximation, several papers (e.g., Zellner, 1997; Kim, 2002;  Yin, 2009) have used the function  
\begin{eqnarray}
(x_{1:T}, \theta)\mapsto\e^{TQ_T(x_{1:T},\theta)}
\label{eq8}  \end{eqnarray}
in lieu of $\pi_{X_{1:T}|\theta_0}(.|.) $, even when the former is not numerically equal to the latter. Then, they consider that the Bayesian posterior distribution $\pi_{\theta_{0}|X_{1:T}}\left(\theta|X_{1:T}(\omega)\right)$ equals
\begin{eqnarray}\frac{\e^{TQ_T(X_{1:T}(\omega),\theta)}w(\theta)}{\int_\T\e^{TQ_T(X_{1:T}(\omega),\dot\theta)}w(\dot \theta)\lambda(\d \dot \theta)}   \label{eq9} \end{eqnarray}
where $w: \T \mapsto \R_+$ is a function that they regard as the Bayesian prior $\pi_{\theta_0}(.) $.
As previously noticed (e.g., Chernozhukov and Hong, 2003), such a practice is not in line with Bayesian theory, even if we set aside the invalidation due to multiple use of the same data: approximations are not compatible with Bayesian inference theory, which  requires  an econometrician 
to know exactly the distribution  of the data conditional on true parameter (Savage, 1954/1972, pp. 59--60).  However,  such a practice is in line with the neoclassical inference  theory whether the function (\ref{eq8}) is numerically equal to $\pi_{X_{1:T}|\theta_0}(.|.) $ or not. 

Under general assumptions, a  strand of literature that goes back at least to Laplace (1774/1878) has shown that (\ref{eq9}) is a consistent  approximation of the unconditional distribution of $\theta^*_{T,WL}$.
See literature on  consistency of Bayesian posteriors (e.g., Doob, 1949), and the Bernstein-von Mises theorem  (e.g., Le Cam, 1953, 1958;   Chen, 1985; Kim, 1998; Chernozhukov and Hong, 2003), which implies consistency under $\P$ (see Appendix \ref{ap7_2_3} on p. \pageref{ap7_2_3}).
We distinguish three types of  Laplace approximations: plain Laplace approximation, weighted Laplace approximation, and criterion-adjusted weighted Laplace approximation.
For brevity, we do not treat the plain Laplace approximation separately from the weighted Laplace approximation, as the former is a particular case of the latter with $w(\theta)=1$, for all $\theta \in \T $.  

\textit{Plain and weighted Laplace approximation.} The neoclassical theory justifies  practices 
that treats  (\ref{eq9}) as a Bayesian posterior, and report the counterpart of a mode  and of an HPD region.  Such practices can be seen  as an implementation of the neoclassical theory s.t.
\begin{itemize}
\item  the generic proxy $\theta^\bullet_{T,WL}$ is a random variable that has the same unconditional distribution as $\theta^*_{T,WL}:= \arg \max_{\theta \in \T}\e^{TQ_T(X_{1:T}(\omega),\theta)}w(\theta) $, i.e., $\theta^\bullet_{T,WL}=F^{-1}_{\theta^*_{T,WL}}(U_{WL}^\bullet)$ where  $F^{-1}_{\theta^*_{T, WL}} $ denotes the inverse of the unconditional c.d.f. of $\theta^*_{T,WL} $, and $U_{WL}^\bullet $ a random variable uniformly distributed on $[0,1] $ that is independent from the data; 
\item the approximation of the unconditional distribution of the proxy is the expression (\ref{eq9}) viewed as a function of $\theta$, i.e., for all $\theta \in \T$,
\begin{eqnarray*}
\hat{f}_{\theta^\bullet_T}(\theta)\propto \e^{TQ_T(X_{1:T}(\omega),\theta)} w(\theta) \text{ with }\mu =\lambda. \label{eq10}
\end{eqnarray*}
 \end{itemize}

From a neoclassical point of view, $w(.)$ weights the  evidence from data. Mathematically, it  corresponds to a change of measure from  the plain Laplace approximation, $\widehat{\P\circ {\theta^\bullet_{T, L}}^{-1}} $, to the weighted Laplace approximation, $\widehat{ \P\circ { \theta^\bullet_{T, WL} }^{-1}} $, i.e., for all $\theta \in \T $,   
\begin{eqnarray*}
w(\theta)=\frac{\d (\widehat{\P\circ {\theta^\bullet_{T, WL}}^{-1}})}{\d (\widehat{\P\circ {\theta^\bullet_{T, L}}^{-1}})}(\theta).
\end{eqnarray*}
The weighting function $w(.)$ allows to incorporate  additional information in the proxy of $\theta_0$. While, in Bayesian theory, the dependance of  a prior on data is typically problematic, in the neoclassical theory, the weighting function $w(.) $ can depend on the data. The neoclassical theory only requires $\theta^\bullet_{T,WL} $ and the integral of (\ref{eq9}) to be considered a proxy of $\theta_0 $,  and an approximation of $\P \circ {\theta^\bullet_{T,WL}}^{-1} $, respectively. Thus, in particular, the neoclassical theory provides a theoretical foundation to the practice called parametric empirical Bayes  (Morris, 1983). Petrone, Rousseau, Scricciolo (2014)
present conditions under which the weighted Laplace approximation (\ref{eq9}) is a consistent approximation of $\P \circ {\theta^\bullet_{T,WL}}^{-1} $ when $w(.)$ depends on data through an estimated hyperparameter.

\textit{Criterion-adjusted weighted Laplace approximation.} When  (\ref{eq9}) is treated as if it was a Bayesian posterior,  the counterpart of a mode  and of an HPD region are not always reported. Instead, the econometrician  chooses a utility function (i.e., opposite of a loss function), $u: (\theta_{e},\theta)\mapsto u(\theta_{\e},\theta)$,   and then maximizes w.r.t. $\theta_e $ the expected utility $\int_{\mathbf{\Theta}} u(\theta_{e},\theta) \frac{\e^{TQ_T(X_{1:T}(\omega),\theta)} w(\theta)}{\int \e^{TQ_T(X_{1:T}( \omega),\dot\theta)} w(\dot \theta)\lambda(\d \dot\theta) }\lambda(\d\theta)$. If $u(.,.)$ is nonnegative (see upcoming Remark \ref{rk3}), such a practice can be seen  as an implementation of the neoclassical theory s.t.
\begin{itemize}
\item the generic proxy is $\theta^{\bullet}_{T,CWL}:=F^{-1}_{\theta^\bullet_{T,CWL}|\theta^{\bullet}_{T,WL}}(U^{\bullet}_{CWL}) $ where $F_{\theta^\bullet_{T,CWL}|\theta^{\bullet}_{T,WL}}(.):=\int_{-\infty}^. \frac{u(\theta, \theta^\bullet_{T,WL})}{\int_\T u(\dot \theta, \theta^\bullet_{T,WL})\lambda(\d \dot \theta)}\lambda(\d \theta)$, and where $U^{\bullet}_{CWL} $ is a random variable uniformly distributed on $[0,1] $ that is independent from the data and $\theta^{\bullet}_{T,WL} $;

   \item  the approximation of the distribution of the  generic proxy $\theta^{\bullet}_{T,CWL} $ is the expected criterion function $u $, i.e., for all $\theta \in \T $,   
\begin{eqnarray}
\hat{f}_{\theta^\bullet_T}(\theta) \propto \int_\T u(\theta,\dot \theta )\e^{TQ_T(X_{1:T}(\omega),\dot \theta)} w(\dot \theta)\lambda (\d \dot \theta) \text{ with }\mu=\lambda.
\end{eqnarray}
\end{itemize}   
In Appendix \ref{ap7},  we show that the criterion-adjusted weighted Laplace approximation (\ref{eq10}) is consistent under general assumptions.
The Bayesian approach and the  neoclassical approach based on the criterion-adjusted weighted Laplace approximation (\ref{eq10}) are fundamentally different, although they are numerically equivalent when  $TQ_T(X_{1:T}, \theta) $ is a log-likelihood (and the data have not been previously used).  For  Bayesian inference theory, an econometrician faces a \textit{known} probabilized uncertainty of a \textit{random}  parameter $\theta_0 $, while from a neoclassical perspective an econometrician faces an\textit{ estimated} probabilized uncertainty of a \textit{random} proxy of a \textit{constant}  parameter $\theta_0$. Thus, the neoclassical theory acknowledges the existence of an ``unmeasurable" uncertainty described by Knight (1921, chap. 7--8).

\begin{rk} \label{rk3}
In this paper, we assume nonnegative  criterion functions to remain within the framework of the elementary version of the neoclassical theory presented in section \ref{sec4}. This requirement limits the type of neoclassical confidence regions considered in this paper. Nevertheless, under the mild assumption that $\inf_{(\dot\theta, \ddot \theta) \in \T^2  }u(\dot\theta,\ddot \theta )> -\infty $, this requirement is without loss of generality for neoclassical point estimation: we can define the criterion function  $ \tilde{u}(\theta_{e} , \theta)= u(\theta_{e},\theta)-\inf_{(\dot\theta, \ddot \theta) \in \T^2  }u(\dot\theta,\ddot \theta )$ which yields the same point estimate.  \hfill $\diamond $ 
\end{rk}

\begin{rk}\label{rk4} As explained in the introduction of this subsection,  our presentation does not present hybrid practices, so that we do not explicitly cover the diversity of the econometric practices labelled Bayesian. However,  they  appear to also be implementations of  the elementary version of the  neoclassical theory presented in the subsection \ref{sec4_2}. For example,  reporting the HPD region of a Bayesian posterior with  its mean can be seen as an implementation of the neoclassical theory s.t. the generic proxy is $\theta^\bullet_{T,WL}$, and the approximation of its distribution is
\begin{eqnarray*}
\hat{f}_{\theta^\bullet_T}(\theta) \propto \begin{cases} \e^{TQ_T(X_{1:T}(\omega),\theta)} w(\theta)  & \text{ if } \theta \in \T \setminus \{\bar{\theta}_T \}\\
 \infty  & \text{ if } \theta=\bar{\theta}_T  \\
\end{cases} 
\end{eqnarray*}
where $\bar{\theta}_T:=\int_{\T}\dot{\theta}\frac{\e^{TQ_T(X_{1:T}(\omega),\dot{\theta})} w(\dot{\theta})}{\int_\T \e^{TQ_T(X_{1:T}(\omega),\ddot{\theta})} w(\ddot{\theta})\lambda(\ddot{\theta})}\lambda(\d \dot{\theta}) $, and $\mu=\lambda$. Similarly, reporting marginal equal-tailed 68\% confidence interval of a Bayesian posterior with  its mean can be seen as an implementation of the neoclassical theory s.t. the generic proxy is $\theta^\bullet_{T,WL}$, and the approximation of its distribution is the product of  $p $ Gaussian p.d.f. centered at the midpoint of  the $k $-th interval with a standard deviation approximately equal to half of the length of the same interval, i.e., 
\begin{eqnarray*}
\hat{f}_{\theta^\bullet_T}(\theta) \propto \begin{cases} \prod_{k=1}^p \mathfrak{n}(\theta_k; \frac{a_k +b_k}{2}; \bar{s}_{k,T}) & \text{ if } \theta \in \T \setminus \{\bar{\theta}_T \}\\
 \infty  & \text{ if } \theta=\bar{\theta}_T  \\
\end{cases} 
\end{eqnarray*} 
with $\mu=\lambda $, and  where $[a_k,b_k] $ denotes the reported intervals,  $\bar{s}_{k,T}\approx\ \frac{|b_k -a_k|}{2}$, and $\mathfrak{n}(\theta; \tau; s) $ is a Gaussian p.d.f. with expectation $\tau            $ and standard deviation $s $.
 \hfill $\diamond $
\end{rk}


\subsection{Assessment of the Gaussian approximation and a standard-error adjustment } \label{sec5_3} 

From a neoclassical point of view, the issue raised by multiple use of the same data boils down to the question of   approximation errors. This subsection studies the average effect of approximations errors, and develops a standard-error adjustment to account for it. For brevity and relevance, we focus on the Gaussian approximation, which corresponds to a large part of econometric practice. Assessments of other approximations are left for future research. 

\begin{table}\caption{Monte-Carlo assessment of Gaussian approximations for $M=10 000 $. RMSE is the square root of the mean-square error.$^a$  The columns $L^2$ and $\sup$, respectively,  correspond to $\frac{1}{M}\sum_{m=1}^M\sqrt{\int_\T [\hat{F}_{\theta^\bullet_T}^{(m)}(\theta)-F_{\theta^\bullet_T}(\theta)]^2\lambda(\d \theta) }$ and $\frac{1}{M}\sum_{i=1}^M \sup_{\theta \in \T} |\hat{F}_{\theta^\bullet_T}^{(m)}(\theta)-F_{\theta^\bullet_T}(\theta)| $. $\tilde{R}_{1-\alpha, T}:= \left[\theta^*_{T}\negthickspace-\negthickspace\sqrt{\frac{2}{T}}\hat{s}_{T}u_{1-\frac{\alpha}{2}},\, \theta^*_{T  }\negthickspace-\negthickspace\sqrt{\frac{2}{T}}\hat{s}_{T} u_{\frac{\alpha}{2}}\right] $} \label{tab5}
\begin{small}
\begin{tabular}{llllllllllllll}
\hline
\hline
 &  & $\overline{X}_T $ & $\hat{s}_T/\negthickspace\sqrt{T}$ & \multicolumn{2}{c}{ $\rho(\hat{F}_{\theta^\bullet_T},{F}_{\theta^\bullet_T} )$} & \multicolumn{4}{c}{$\P (\theta^\bullet_T \negthickspace\in\negthickspace \hat{R}_{1-\alpha, T}(\omega)) $} & \multicolumn{4}{c}{ $\P (\theta^\bullet_T \negthickspace\in\negthickspace \tilde{R}_{1-\alpha, T}(\omega)) $  } \\ 
\cline{3-14}
$T$ & $\negthickspace\P \circ X_1^{-1} $ & RMSE & RMSE & $L^2 $ & $ \sup$ & $.68$ & $.90$ & $ .95$ & $.99 $ &  $.68$ & $.90$ & $ .95$ & $.99 $ \\ 
\hline
$20$ & $\mathcal{\negthickspace N}(0, .2) $ &  .045$^b$ & .007$^b$& .089 & .311 & .5 & .732 & .811 & .912 & .658 & .878 & .932 & .981  \\ 
$50$ &  &  .028 & .003 & .07 & .303 & .51 & .746 & .825 & .924 & .671 & .892 & .943 & .987  \\ 
$100$ &  & .02 & .001 & .058 & .299 & .515 & .752 & .831 & .929 & .677 & .897 & .948 & .989  \\ 
\hline
$20$ & $\mathcal{\negthickspace N}(0, .4) $ &  .09 & .015 & .126 & .311 & .5 & .732 & .811 & .912 & .658 & .878 & .932 & .981  \\ 
$50$ &  &  .057 & .006 & .099 & .303 & .51 & .746 & .825 & .924 & .671 & .892 & .943 & .987  \\ 
$100$ &  &  .04 & .003 & .082 & .299 & .515 & .752 & .831 & .929 & .677 & .897 & .948 & .989 \\ 
\hline
$20$ & $\negthickspace B( \frac{2-\sqrt{3}}{4})^c $ & .056 & .043 & .117 & .37 & .345 & .638 & .677 & .726 & .568 & .725 & .728 & .74  \\ 
$50$ &  & .035 & .027 & .083 & .405 & .48 & .74 & .794 & .885 & .632 & .87 & .89 & .937  \\ 
$100$ &  & .025 & .019 & .067 & .374 & .501 & .741 & .824 & .918 & .683 & .881 & .929 & .971\\ 
\hline
$20$ & $\negthickspace B( .5) $ &  .113 & .053 & .143 & .375 & .566 & .729 & .838 & .911 & .686 & .88 & .931 & .981  \\ 
$50$ &  &  .071 & .035 & .111 & .346 & .515 & .728 & .806 & .922 & .631$ ^d$ & .898 & .942 & .987  \\ 
$100$ &  & .05 & .025 & .093 & .332 & .474 & .768 & .82 & .922 & .682 & .895 & .943 & .988  \\ 
\hline
\end{tabular}
\begin{tiny}
\begin{flushleft} $^a\!$ We do no report the bias as we know that $\overline{X}_T$ and $\frac{T}{T-1}\hat{s}_T^2 $ are  unbiased (e.g., Gouri\'eroux and Monfort, 1989/1996, Example 6.4).  $^b\!$ RMSE$(\hat{s}_T/\negthickspace\sqrt{T})\negthickspace=\negthickspace\text{RMSE}(\hat{s}_T)/\sqrt{T} $, which elucidates why  RMSE$(\overline{X}_T)<\text{RMSE}(\hat{s}_T/\negthickspace\sqrt{T})$. 
$^c$ As in the Gaussian case, this Bernoulli parameter  is chosen to halves the standard deviation of the other Bernoulli, i.e.,  $\sqrt{\frac{2-\sqrt{3}}{4}(1-\frac{2-\sqrt{3}}{4})}=\frac{.5}{2} $. $^d$\! The  non-monotonic convergence to .68 is due to the discontinuities induced by the Bernoulli distribution. See for example Brown, Cai and DasGupta (2002) for a similar phenomenon.   \!\end{flushleft}. \end{tiny}
\end{small}
\end{table}

The basic algorithm of our Monte-Carlo simulations is the following.

\texttt{
For $m =1, 2, 3, \ldots, M $
\vspace{-10pt}
\begin{enumerate}
\item Draw  i.i.d. data $X_{1:T}^{(m)}(\omega):=(X_t^{(m)}(\omega))_{t=1}^T$
 \item Compute  
 \begin{itemize}
\item  $\theta_T^{*(m)}(\omega)=\overline{X}_{T}^{(m)}(\omega)$;
\item    $\frac{\hat{s}_T^{(m)}(\omega)}{\sqrt{T}}=\frac{1}{\sqrt{T}}\sqrt{\frac{1}{T}\sum_{t=1}^T(X_t^{(m)}(\omega)-\overline{X}_T^{(m)}(\omega))^2}; $ 
\item    $\hat{F}_{\theta^\bullet_T}^{(m)}(\theta)=  \mathfrak{N}\left( \theta; \overline{X}_{T}^{(m)}(\omega);  \frac{\hat{s}_T^{(m)}(\omega)}{\sqrt{T}}\right)  $  ;
\item   
$\P\negthickspace\left(\theta^\bullet_T \negthickspace\in\hat{\negthickspace R}_{1-\alpha, T}^{(m)}(\omega)\negthickspace  \right)$, where\!\ $\hat{R}_{1-\alpha, T}^{(m)}\negthickspace:=\negthickspace\left[\theta_T^{*(m)}\negthickspace-\negthickspace\frac{\hat{s}_T^{(m)}}{\sqrt{T}}u_{1-\frac{\alpha}{2}},\theta_T^{*(m)}\negthickspace-\negthickspace\frac{\hat{s}_T^{(m)}}{\sqrt{T}}u_{\frac{\alpha}{2}} \right] $\!.
\end{itemize}
\end{enumerate}
}
We  draw data either from a Gaussian distribution, or from a Bernoulli distribution. Both families of distributions are interesting for different reasons.  Data from a  Bernoulli distribution are known to be relatively challenging for   Gaussian approximations, especially when the Bernoulli parameter is close to 0 or 1 (e.g., Agresti and Coull, 1998; Brown , Cai and DasGupta, 2002 and references therein). Data from a Gaussian distribution  neutralizes the part of the approximation error coming  from the  distribution family: the average of Gaussian random variables is also a Gaussian random variable.


Table \ref{tab5}  shows that, in both cases, Gaussian approximations globally converge  in terms of the first two moments, of the  $L^2 $ norm, and of the  $\sup $ norm.  However,  the  probability of neoclassical confidence regions to contain the generic proxy appears   downward biased.
Proposition \ref{prop4} formalizes this downward bias, and proposes an asymptotic adjustment for it.

\begin{figure}\caption{Examples of approximation  errors for  $\P \circ X_1^{-1} \stackrel{d}{\sim}\mathcal{N}(0,.4) $ and $T=20 $. The solid black line is $\P\circ {\theta^\bullet_T}^{-1} $, while the others are realizations of its Gaussian approximation $\mathcal{N}(\overline{X}_T,\frac{\hat{s}_T^{(m)}}{\sqrt{T}} ) $. Vertical dashed lines correspond to the 95\% neoclassical confidence region from $\P\circ {\theta^\bullet_T}^{-1} $.}\label{fig1} 
\begin{tabular}{cc}
\includegraphics[scale=0.7]{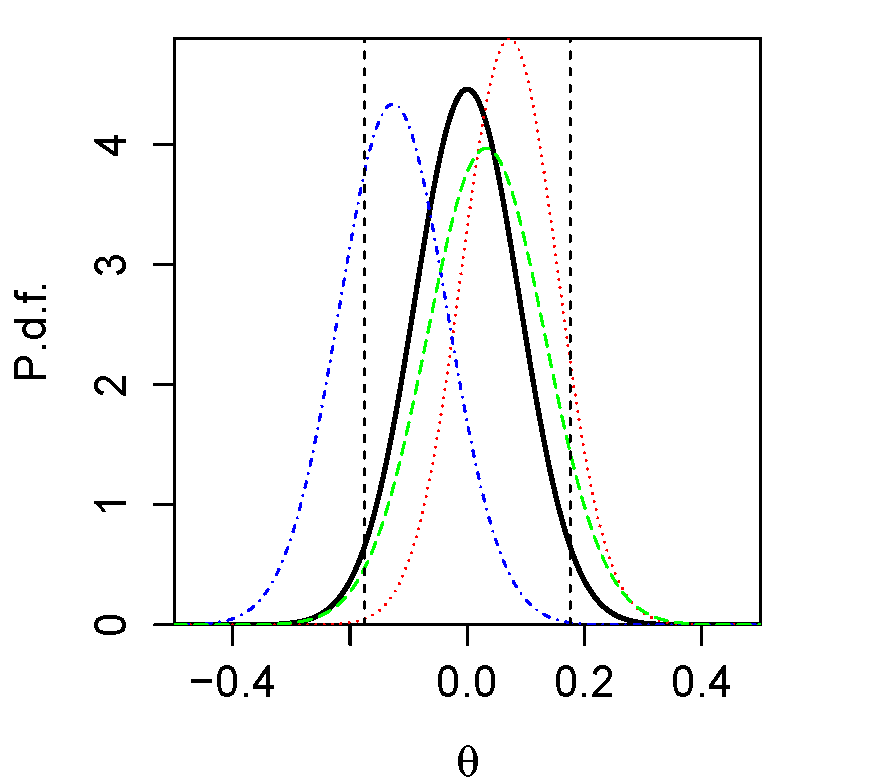}& \includegraphics[scale=0.7]{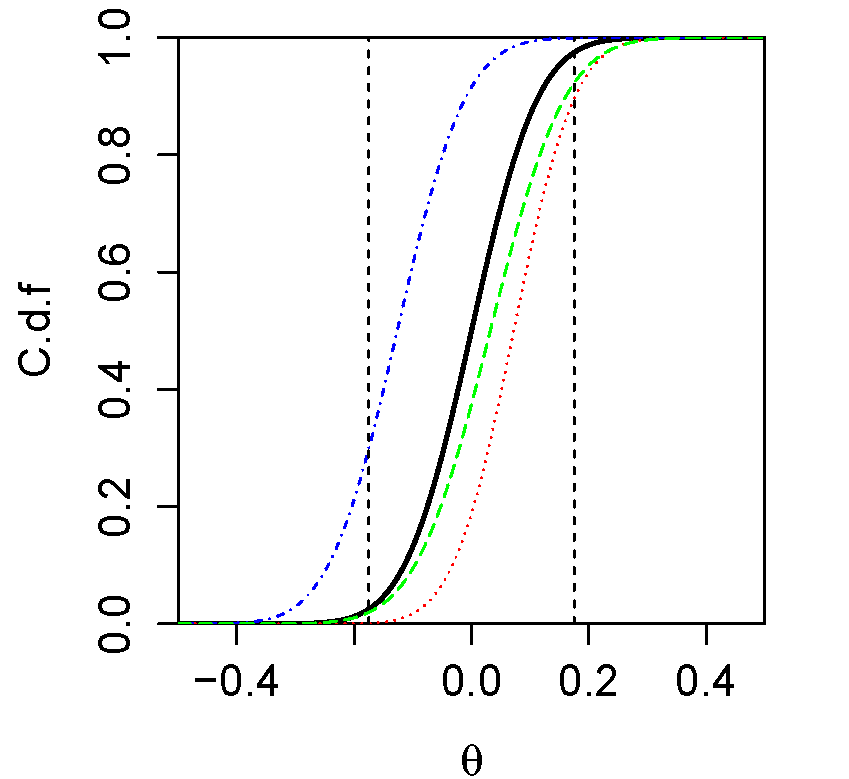} \\
\end{tabular}
 
\end{figure}

\begin{prop}[Downward bias and standard-error adjustment] \label{prop4}  Under Assumptions \ref{assp1}, \ref{assp4}, and \ref{assp9},  if $\hat{\Sigma}_T \stackrel{\P}{\rightarrow} \Sigma $ as $T \rightarrow \infty $, then, for all $\mathcal{E}_T\subset \sigma(X_{1:T}) $ and $k\in \[1,p \] $, 

\begin{enumerate}
\item[i)] $ \displaystyle \lim_{T \rightarrow \infty}\E \left\{ \P\left(\left.\theta^\bullet_{T,k} \in \left[\theta^*_{T,  k}-\frac{\hat{s}_{T, k,k}}{\sqrt{T}}u_{1-\frac{\alpha}{2}},\, \theta^*_{T,  k}-\frac{\hat{s}_{T,k,k}}{\sqrt{T}} u_{\frac{\alpha}{2}}\right]\right| \mathcal{E}_T\right)\right\}<1-\alpha$

\item[ii)] $ \displaystyle \lim_{T \rightarrow \infty} \E\left\{\P\negthickspace\left(\left.\negthickspace\theta^\bullet_{T,k} \negthickspace\in\negthickspace \left[\theta^*_{T,  k}\negthickspace-\negthickspace\sqrt{\frac{2}{T}}\hat{s}_{T, k,k}u_{1-\frac{\alpha}{2}},\, \theta^*_{T,  k}\negthickspace-\negthickspace\sqrt{\frac{2}{T}}\hat{s}_{T,k,k} u_{\frac{\alpha}{2}}\right]\right| \mathcal{E}_T\right)\right\}= 1-\alpha$
\end{enumerate}
 where  $\theta^*_{T, k}$, ${\hat{s}_{T, k,k}}^2$ and $u_{\alpha/2}$, respectively,  denote the $k$-th  element of the random vector $\theta^*_{T} $, the $k$-th diagonal element of the matrix $\hat{\Sigma}_T $, and the  $\alpha/2$ quantile of a standard univariate Gaussian $\mathcal{N}(0,1)$.
\end{prop}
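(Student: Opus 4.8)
The plan is to first strip off the conditioning by a tower-property argument, and then collapse both parts to a single Gaussian limit. By Lemma \ref{lem1} the generic proxy $\theta^\bullet_T$ is independent of $X_{1:T}$, while the interval endpoints built from $\theta^*_{T,k}$ and $\hat{s}_{T,k,k}$ are $\sigma(X_{1:T})$-measurable, so the inner quantity $\P(\,\cdot\mid\mathcal{E}_T)$ is the conditional expectation of an indicator given a sub-$\sigma$-algebra $\mathcal{E}_T\subset\sigma(X_{1:T})$. For any event $A$ and any such $\mathcal{E}_T$ the tower property gives $\E\{\P(A\mid\mathcal{E}_T)\}=\P(A)$, so the outer expectation simply erases the conditioning (this is exactly the immunity recorded in Theorem \ref{thm3} and Corollary \ref{cor1}). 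It therefore suffices, in both (i) and (ii), to evaluate the limit of the \emph{unconditional} probability that $\theta^\bullet_{T,k}$ falls in the stated interval.

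Next I would rewrite the event. Rearranging the two interval inequalities, $\theta^\bullet_{T,k}$ lies in the unadjusted interval if and only if
\begin{equation*}
u_{\alpha/2}\ \leqslant\ \frac{\sqrt{T}\,(\theta^*_{T,k}-\theta^\bullet_{T,k})}{\hat{s}_{T,k,k}}\ \leqslant\ u_{1-\frac{\alpha}{2}},
\end{equation*}
and in the adjusted interval if and only if the same holds with $\hat{s}_{T,k,k}$ replaced by $\sqrt{2}\,\hat{s}_{T,k,k}$. The distributional input is the joint behaviour of $\theta^*_T$ and $\theta^\bullet_T$: by Lemma \ref{lem1} they are independent and share the same law, and by Assumption \ref{assp9} the generic proxy obeys $\theta^\bullet_T=\theta_0+\xi^\bullet/\sqrt{T}+o_\P(T^{-1/2})$ with $\xi^\bullet$ centered Gaussian of covariance $\Sigma$. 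The common law forces $\sqrt{T}(\theta^*_T-\theta_0)\stackrel{d}{\rightarrow}\xi^*$, and since the two proxies are independent the pair $(\sqrt{T}(\theta^*_T-\theta_0),\sqrt{T}(\theta^\bullet_T-\theta_0))$ converges jointly in law to $(\xi^*,\xi^\bullet)$ with $\xi^*,\xi^\bullet$ independent copies of the same limit. The continuous mapping theorem then gives $\sqrt{T}(\theta^*_{T,k}-\theta^\bullet_{T,k})\stackrel{d}{\rightarrow}\xi^*_k-\xi^\bullet_k\stackrel{d}{\sim}\mathcal{N}(0,2\Sigma_{kk})$, and, using $\hat{\Sigma}_T\stackrel{\P}{\rightarrow}\Sigma$ so that $\hat{s}_{T,k,k}\stackrel{\P}{\rightarrow}\sqrt{\Sigma_{kk}}$, Slutsky's lemma yields that the studentized ratio above converges in distribution to $\sqrt{2}\,Z$ with $Z\stackrel{d}{\sim}\mathcal{N}(0,1)$; the remainders $R^\bullet_T,R^*_T=o_\P(T^{-1/2})$ are absorbed at this step.

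Finally I would pass to the limiting probability. Since $\sqrt{2}\,Z$ has a continuous law the endpoints are continuity points, so by the Portmanteau theorem, using the symmetry $u_{\alpha/2}=-u_{1-\frac{\alpha}{2}}$ and writing $\Phi$ for the standard normal c.d.f.,
\begin{equation*}
\lim_{T\rightarrow\infty}\P\Bigl(u_{\alpha/2}\leqslant\sqrt{2}\,Z\leqslant u_{1-\frac{\alpha}{2}}\Bigr)=2\,\Phi\!\left(\frac{u_{1-\frac{\alpha}{2}}}{\sqrt{2}}\right)-1.
\end{equation*}
Because $u_{1-\frac{\alpha}{2}}>0$ for $\alpha\in(0,1)$ and $\Phi$ is strictly increasing, $u_{1-\frac{\alpha}{2}}/\sqrt{2}<u_{1-\frac{\alpha}{2}}$ forces this limit strictly below $2\Phi(u_{1-\frac{\alpha}{2}})-1=1-\alpha$, which is (i). For (ii) the extra $\sqrt{2}$ in the adjusted half-width cancels the $\sqrt{2}$ in the limiting scale, so the studentized ratio now converges to $Z$ itself and the limit is exactly $\P(u_{\alpha/2}\leqslant Z\leqslant u_{1-\frac{\alpha}{2}})=1-\alpha$. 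The one genuinely load-bearing point is the \emph{joint} convergence that pins the limiting variance at $2\Sigma_{kk}$ rather than $\Sigma_{kk}$: it is the independence supplied by Lemma \ref{lem1} that doubles the variance, and hence exactly this doubling that the $\sqrt{2}$ adjustment in (ii) is designed to undo.
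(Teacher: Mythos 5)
Your proposal is correct and follows essentially the same route as the paper's proof: strip the conditioning by iterated expectation, studentize the event into $u_{\alpha/2}\leqslant \eta^{-1}\sqrt{T}(\theta^*_{T,k}-\theta^\bullet_{T,k})/\hat{s}_{T,k,k}\leqslant u_{1-\alpha/2}$, and use the independence of $\theta^*_T$ and $\theta^\bullet_T$ together with Assumption \ref{assp9} to obtain a limiting $\mathcal{N}(0,2\Sigma_{kk})$ for the numerator, whence the strict downward bias for $\eta=1$ and exact coverage for $\eta=\sqrt{2}$. The only cosmetic difference is that the paper adds and subtracts $\theta_0$ and invokes the continuous mapping theorem term by term, whereas you argue via joint convergence of the independent pair plus Slutsky, which is a slightly cleaner way of justifying the same variance doubling.
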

\begin{proof}It is an immediate consequence of the asymptotic normality of $ \theta^\bullet_{T,k} $.  For all $\eta >0 $,
by iterated conditioning,\begin{eqnarray*}
& & \lim_{T \rightarrow \infty}\E \left\{ \P\left(\left.\theta^\bullet_{T,k} \in \left[\theta^*_{T,  k}-\frac{\eta \hat{s}_{T, k,k}}{\sqrt{T}}u_{1-\frac{\alpha}{2}},\, \theta^*_{T,  k}-\frac{\eta \hat{s}_{T,k,k}}{\sqrt{T}} u_{\frac{\alpha}{2}}\right]\right| \mathcal{E}_T\right)\right\}\\
& = &  \lim_{T \rightarrow \infty}\P\left(\theta^\bullet_{T,k} \in \left[\theta^*_{T,  k}-\frac{\eta \hat{s}_{T, k,k}}{\sqrt{T}}u_{1-\frac{\alpha}{2}},\, \theta^*_{T,  k}-\frac{\eta \hat{s}_{T,k,k}}{\sqrt{T}} u_{\frac{\alpha}{2}}\right]\right)\\
& \stackrel{(a)}{=} & \lim_{T \rightarrow \infty} \P \left( u_{\frac{\alpha}{2}} \leqslant\frac{1}{\eta} \sqrt{T} \frac{(\theta^*_{T,  k}-\theta^\bullet_{T,k})}{\hat{s}_{T, k,k}} \leqslant  u_{1-\frac{\alpha}{2}} \right) \\
& \stackrel{(b)}{=} &\lim_{T \rightarrow \infty} \P\left( u_{\frac{\alpha}{2}} \leqslant \frac{1}{\eta} \sqrt{T} \frac{(\theta^*_{T,  k}-\theta_0)}{\hat{s}_{T, k,k}} +\frac{1}{\eta} \sqrt{T} \frac{(\theta_0-\theta^\bullet_{T,  k})}{\hat{s}_{T, k,k}}   \leqslant u_{1-\frac{\alpha}{2}} \right)\\
& \stackrel{(c)}{=} & \P\left( u_{\frac{\alpha}{2}} \leqslant \mathcal{N}(0,  \frac{\sqrt{2 }}{\eta}) \leqslant u_{1-\frac{\alpha}{2}} \right)\\
& \stackrel{}{=} &\begin{cases}\P\left( u_{\frac{\alpha}{2}} \leqslant \mathcal{N}(0,  \sqrt{2 }) \leqslant u_{1-\frac{\alpha}{2}} \right)< 1-\alpha & \text{if }\eta=1 \text{, so that it yields i);}\\
\P\left( u_{\frac{\alpha}{2}} \leqslant \mathcal{N}(0,  1) \leqslant u_{1-\frac{\alpha}{2}} \right)= 1-\alpha & \text{if } \eta=\sqrt{2} \text{, so that it yields ii).}\\
\end{cases}
\end{eqnarray*}
\textit{(a)} On one hand,      $\theta^\bullet_{T,k} \leqslant \theta^*_{T,  k}-\frac{\eta \hat{s}_{T,k,k}}{\sqrt{T}} u_{\frac{\alpha}{2}}  \Leftrightarrow  u_{\frac{\alpha}{2}} \leqslant  \frac{1}{\eta} \sqrt{T} \frac{(\theta^*_{T,  k}-\theta^\bullet_{T,k})}{\hat{s}_{T, k,k}}  
$.   On the other hand, similarly, $ \theta^*_{T,  k}-\frac{\eta \hat{s}_{T, k,k}}{\sqrt{T}}u_{1-\frac{\alpha}{2}} \leqslant \theta^\bullet_{T,k} \Leftrightarrow  \frac{1}{\eta} \sqrt{T} \frac{(\theta^*_{T,  k}-\theta^\bullet_{T,  k})}{\hat{s}_{T, k,k}}\leqslant u_{1-\frac{\alpha}{2}}   $. 
 \textit{(b)} Add and subtract $\theta_0$.  \textit{(c)} Under Assumption \ref{assp9}, by  the continuous mapping theorem  (e.g., Kallenberg, 1997/2002, 
Lemma 4.3), as $T \rightarrow \infty $,
$ \frac{1}{\eta} \sqrt{T} \frac{(\theta^*_{T,  k}-\theta_0)}{\hat{s}_{T, k,k}}+ \frac{1}{\eta} \sqrt{T} \frac{(\theta_0-\theta^\bullet_{T,k})}{\hat{s}_{T, k,k}} \stackrel{\P}{ \rightarrow} \frac{\xi^*_k}{\eta s_{k,k} }+\frac{\xi^\bullet_k}{\eta s_{k,k} }$, where $\xi^*_k \stackrel{d}{\sim} \mathcal{N}(0, s_{k,k})$ is independent from $\xi^\bullet_k $.
\end{proof}

Proposition \ref{prop4}i) shows that the downward bias holds under  general assumptions, independently of  the sub-sigma algebra of the data we condition on. Figure \ref{fig1} (p. \pageref{fig1}) illustrates the reason of this downward bias\,: the Gaussian approximation does not account for the fact that its mean and standard deviation are not known, but estimated, so that there is an approximation error. Proposition \ref{prop4}ii) shows that multiplying the  standard error  by $\sqrt{2} $  asymptotically accounts for the average approximation error, independently of  the sub-sigma algebra of the data we condition on. The RHS columns of Table \ref{tab5} suggest that this asymptotic adjustment is effective in finite sample. The proof of Proposition \ref{prop4} formalizes the rationale behind the  adjustment: asymptotically, after centering and scaling by $\sqrt{T}$, the average  approximation error exactly corresponds to the uncertainty about  $\theta^\bullet_T$, so that the variance is doubled by independence, which means that the standard error is multiplied by $\sqrt{2} $.

 As can be seen from Figure \ref{fig2} (p. \pageref{fig2}), the adjustment has a nonlinear effect on  confidence region and test levels. The adjustment has a stronger effect in the tails because Gaussian distributions are exponentially decreasing in the tails.   Tables \ref{tab3}, \ref{tab4}  and \ref{tab6} are conversion tables  that documents the effect of the adjustment at conventional levels. They should be of special interest to applied econometricians. Table \ref{tab3} shows that tests at nominal levels .01, .05 and .1 are  tests at  approximate adjusted nominal levels .069, .166 and .245, respectively. Conversely, Table \ref{tab4} shows that tests at adjusted nominal level  .01, .05, and .1  respectively requires   the non-adjusted p-values  computed by standard software to be approximately below $.027(10^{-2}) $, $.056( 10^{-1}) $, and $.020 $ for rejection of the test hypothesis.  Table \ref{tab6} shows the effect of the adjustment on critical values   for the non-adjusted $t\text{-values}$  computed  by standard software.    Tables \ref{tab3}, \ref{tab4} and \ref{tab6} shed a new light on
results
published in nonexperimental fields. In particular, in view of the data collected by Brodeur, L\'e, Sangnier and Zylberberg (2013, Figure I), the adjustment appears to affect   the significance at conventional levels of many results  in the economic literature.

\begin{figure}\caption{Relation between nominal level and adjusted nominal level of a test.}\label{fig2}
\includegraphics[scale=0.7]{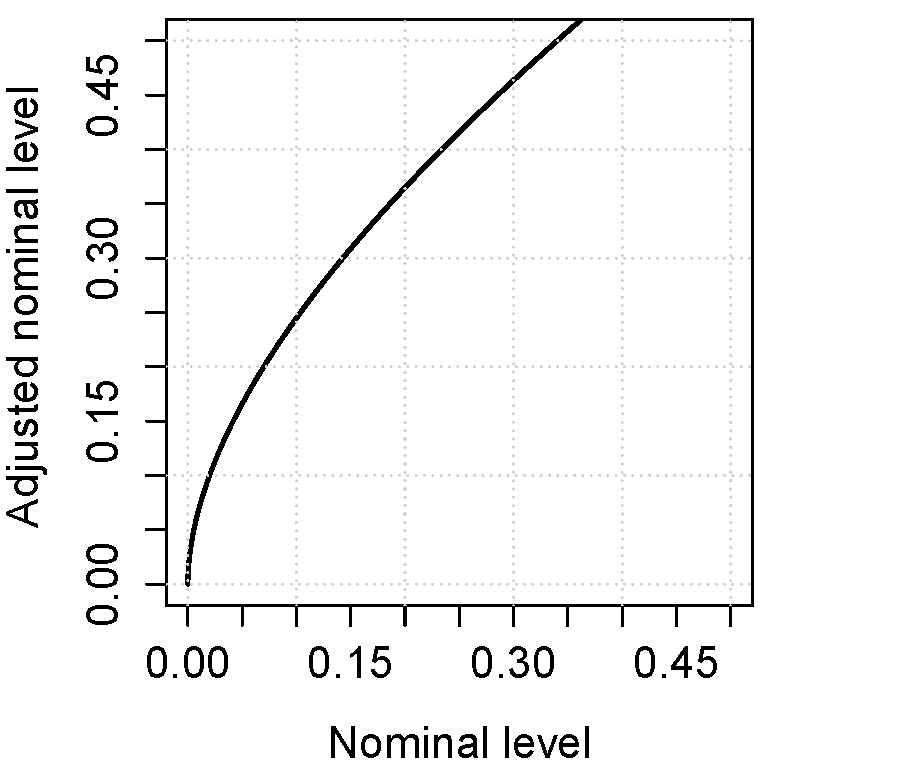}
\end{figure}
 
\begin{table}\caption{From nominal levels to adjusted nominal levels. } \label{tab3}
\begin{small}
\begin{tabular}{lllll} 
\hline
\hline
Nominal Level & \multicolumn{1}{c}{.01} & \multicolumn{1}{c}{.05} & \multicolumn{1}{c}{.1} & \multicolumn{1}{c}{.32} \\ 
\hline
Adj. nominal level (appr.) & \multicolumn{1}{c}{.069} & \multicolumn{1}{c}{.166} & \multicolumn{1}{c}{.245} & \multicolumn{1}{c}{.482} \\ 
\hline
\multicolumn{5}{l}{\begin{tiny}Adj. and appr. respectively stand for adjusted and approximation.\end{tiny}}\\
\end{tabular}
\end{small}
\end{table}

\begin{table} \caption{From adjusted nominal levels to nominal levels, and  adjusted critical values for non-adjusted t-statistics. } \label{tab4}
\begin{small}

 \begin{tabular}{lllll}
\hline
\hline
Adj. nominal level & \multicolumn{1}{c}{.01} & \multicolumn{1}{c}{.05} & \multicolumn{1}{c}{.1}& \multicolumn{1}{c}{.32} \\ 
\hline
Nominal Level (appr.) & \multicolumn{1}{c}{$.027( 10^{-2})$} & \multicolumn{1}{c}{$.056( 10^{-1})$} & \multicolumn{1}{c}{.020} & \multicolumn{1}{c}{.16}\\ 
\hline
\multicolumn{5}{l}{\begin{tiny}Adj. and appr. respectively stand for adjusted and approximation.\end{tiny}} \\
\end{tabular}
\end{small}

\end{table}

\begin{table} 

  \caption{From non-adjusted critical values to  adjusted critical values for non-adjusted t-statistics.  } \label{tab6}
\begin{small} 
 \begin{tabular}{lllll}
\hline
\hline
Non-adjusted critical values & \multicolumn{1}{c}{2.58} & \multicolumn{1}{c}{1.96} & \multicolumn{1}{c}{1.64}& \multicolumn{1}{c}{.99} \\ 
\hline
Adj. critical values (appr.) & \multicolumn{1}{c}{$3.64$} & \multicolumn{1}{c}{$2.77$} & \multicolumn{1}{c}{2.33} & \multicolumn{1}{c}{1.41}\\ 
\hline
\multicolumn{5}{l}{\begin{tiny}Adj. and appr. respectively stand for adjusted and approximation.\end{tiny}}\\
\end{tabular}

\end{small}

\end{table}

\section{Conclusion} 
In nonexperimental fields, it is inescapable to compute  test statistics and confidence regions  that are not probabilistically independent from previously examined data.   It has been  known for decades that  Neyman-Pearson and Bayesian inference theories are inadequate for such a practice. This paper recalls these inadequacies, and formally shows that they also hold  m.a.e. A novel inadequacy of the Neyman-Pearson theory for past-realized data is also established. Then, a general inference theory compatible with multiple use of the same data m.a.e. is outlined. We call it the neoclassical inference theory. 

The starting point of the neoclassical theory is   the acknowledgement that  econometric inference  relies on the use of a sample counterpart of the unknown parameter $\theta_0 $   as  a proxy for the latter one. Then, the idea  is to base inference on an approximation of the unconditional distribution of  the proxy. By definition, the unconditional distribution of the proxy is about all the possible values of the proxy  induced by all the possible samples that could have been observed. Thus, neoclassical inference does not depend on the realized data m.a.e. Therefore, if we set aside approximation errors, the neoclassical theory    explains why   econometric inference can rely on multiple use of the same data. 

The other side of the coin is that,  from a neoclassical point of view, the issue raised by multiple use of the same data boils down to the question of  the approximation errors, which is the topic of most of the econometric and statistical literature. Nevertheless,  Monte-Carlo simulations show that finding accurate approximations is not sufficient. Even when the approximation method is known to be  accurate,  errors can have a consequential effect on tests and confidence regions. In particular,   we prove that the Gaussian approximation yields a downward bias in the probability of neoclassical confidence regions to contain the generic proxy. Thus, we derive a general, but simple asymptotic standard-error adjustment to remove this bias.  Monte-Carlo simulations suggest that the adjustment is effective in finite sample. However, more work would be needed to study the impact of  approximation errors in other situations. The authors have work in progress in that direction.  

Beyond the question of multiple use of the same data, the  neoclassical inference theory is promising. The neoclassical inference theory sheds a new light on foundational and methodological debates in statistics, economics and finance  (e.g., calibration vs. estimation, and Bayesian inference vs. classical inference). The Example and section \ref{sec5} show that the neoclassical theory  provides a  unifying  framework for model calibration and several common econometric practices, whether they are labelled Bayesian or \`a la Neyman-Pearson.  Moreover, work in progress by the authors indicates that the  version of the neoclassical developed  in this paper is  generalizable.



\section*{Acknowledgements}
Helpful comments were provided by 
seminar participants at BI (finance and economics), Universit\'e Catholique de Louvain (CORE), University of Oslo (statistics), RCEF 2014 (Bayesian econometrics), SIPTA 2014, Tinbergen Institute (ector), CFE-ERCIM  2014, and Institut Henri Poincar\'e (semstats). Benjamin Holcblat acknowledges support from the Centre for Asset Pricing Research. 

\appendix
\section{Consistency adequacy} \label{ap1}

The following proposition shows that multiple use of the same data does not affect the consistency of a point estimator. In this Appendix \ref{ap1}, m.a.e. means that   we always consider the asymptotic limit to be exact for any sample size $T$. Nevertheless, for simplicity,  we also exclude the case \` a la Hoffmann-J{\o}rgensen (see Wellner and van der Vaart, 1996) in which finite-sample statistics do not need to be measurable.  

\begin{prop}[Consistency adequacy]\label{prop6} Let   $\hat{\theta}_T$ be an estimator of $\theta_0 $, i.e.,  a  measurable mapping from $(\Omegabf, \mathcal{E}_\Omegabf)$ to  $(\T, \mathcal{E}_\T)$, where $\mathcal{E}_\T $ denotes a $\sigma$-algebra on $\T$. Under Assumption \ref{assp1}, \begin{enumerate}
\item[i)] if $\hat{\theta}_T$ is strongly consistent, then, for all  $S \in \underline{\mathcal{S}}_T $, $\left\{ X_{1:T} \in S\right\}$ and $\{ \hat{\theta}_T  =\theta_0\}$ are independent m.a.e., i.e.,  
\begin{eqnarray*}
\P\left(\{ \hat{\theta}_T  =\theta_0\} \cap \left\{X_{1:T} \in S\right\}\right) = \P(\hat{\theta}_T  =\theta_0) \P(X_{1:T} \in S) \, m.a.e.;
\end{eqnarray*}

\item[ii)]  if $\hat{\theta}_T$ is weakly consistent, then, for all  $S \in \underline{\mathcal{S}}_T $, and for all neighborhood $N_{\theta_0} $ of $\theta_0 $, $\left\{ X_{1:T} \in S\right\}$ and $\{ \hat{\theta}_T \in N_{\theta_0} \}$ are independent m.a.e., i.e.,  
\begin{eqnarray*}
\P\left(\{ \hat{\theta}_T \in N_{\theta_0}   \}\cap \left\{X_{1:T} \in S\right\}\right) = \P( \hat{\theta}_T \in N_{\theta_0}   )  \P(X_{1:T} \in S)\, m.a.e.
\end{eqnarray*}

\end{enumerate}

\end{prop}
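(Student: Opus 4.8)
The plan is to reduce both claims to the elementary observation that an event of probability one is independent of every measurable event. Indeed, if $\P(A)=1$, then for any $B\in\mathcal{E}_\Omegabf$ we have $\P(A\cap B)=\P(B)-\P(A^c\cap B)=\P(B)$, since $\P(A^c\cap B)\leqslant\P(A^c)=0$; meanwhile $\P(A)\P(B)=\P(B)$, so that $\P(A\cap B)=\P(A)\P(B)$. The entire content of the proposition is therefore to verify that consistency forces the relevant event involving $\hat{\theta}_T$ to have probability one m.a.e., after which independence is automatic regardless of the choice of $S\in\underline{\mathcal{S}}_T$.

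For part (i), strong consistency means $\hat{\theta}_T\to\theta_0$ $\P$-a.s., i.e., $\P(\lim_{T\rightarrow\infty}\hat{\theta}_T=\theta_0)=1$. Under the m.a.e. convention adopted in this appendix, where the asymptotic limit is treated as exact for the given sample size, this reads $\P(\hat{\theta}_T=\theta_0)=1$ m.a.e. Putting $A=\{\hat{\theta}_T=\theta_0\}$ and $B=\{X_{1:T}\in S\}$, the displayed identity follows from the elementary remark above. For part (ii), weak consistency means that $\hat{\theta}_T$ converges to $\theta_0$ in probability, i.e., for every neighborhood $N_{\theta_0}$ of $\theta_0$ we have $\P(\hat{\theta}_T\in N_{\theta_0})\rightarrow 1$ as $T\rightarrow\infty$. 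Again under the m.a.e. convention this gives $\P(\hat{\theta}_T\in N_{\theta_0})=1$ m.a.e. Taking $A=\{\hat{\theta}_T\in N_{\theta_0}\}$ and the same $B$, the same remark yields the claimed independence. This is, in spirit, the same mechanism that drives Theorem \ref{thm3}, where independence of $\theta^\bullet_T$ from the data is exploited; here it is the degeneracy of the limiting event rather than an explicit randomization that supplies the independence.

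The argument is essentially definition chasing, in the manner of the proofs of Theorems \ref{thm1} and \ref{thm2}, and no genuine obstacle arises. The only point requiring care is the reading of m.a.e. in this appendix: strong consistency a priori controls only the almost-sure asymptotic behavior and says nothing about the finite-$T$ event $\{\hat{\theta}_T=\theta_0\}$, which for fixed $T$ may well have probability strictly less than one; one must therefore invoke the stated convention that the asymptotic limit is exact in order to pass to $\P(\hat{\theta}_T=\theta_0)=1$ m.a.e. Measurability of $\{\hat{\theta}_T=\theta_0\}$ and $\{\hat{\theta}_T\in N_{\theta_0}\}$ is guaranteed because $\hat{\theta}_T$ is $\mathcal{E}_\Omegabf/\mathcal{E}_\T$-measurable and the appendix explicitly sets aside the Hoffmann-J{\o}rgensen case in which finite-sample statistics need not be measurable.
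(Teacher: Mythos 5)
Your proposal is correct and follows essentially the same route as the paper: reduce both parts to the fact that a probability-one event is independent of every event, then use the m.a.e.\ convention to turn strong (resp.\ weak) consistency into $\P(\hat{\theta}_T=\theta_0)=1$ (resp.\ $\P(\hat{\theta}_T\in N_{\theta_0})=1$) m.a.e. The only cosmetic difference is that you derive the elementary independence fact via $\P(A^c\cap B)\leqslant\P(A^c)=0$ while the paper uses inclusion--exclusion on $\P(A\cup B)$; these are interchangeable.
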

\begin{proof} It is definition chasing. By a standard  property of probability, for all $A,B \in \mathcal{E}_\Omegabf  $, $\P(A \cup B)= \P(A)+\P(B)-\P(A \cap B) $. Thus, if $\P(A)=1 $, adding $\P(A \cap B)-1$  on both sides yields
\begin{eqnarray}
\P(A \cap B)= \P(B)=\P(A)\P(B) \label{eq3}
\end{eqnarray}
because $1=\P(A)\leqslant \P(A \cup B)\leqslant 1 $.  

 i) By definition of strong consistency, $\P(\lim_{T \rightarrow \infty}\hat{\theta}_T  =\theta_0)=1$, which means that    $\P(\hat{\theta}_T  =\theta_0)=1$ m.a.e. Then, apply (\ref{eq3}) with $A=\{\hat{\theta}_T  =\theta_0\} $ and $B=\left\{X_{1:T} \in S\right\}$. 

ii) By definition of weak consistency, for all neighborhood $N_{\theta_0} $ of $\theta_0 $,  $\lim_{T \rightarrow \infty}\P_{*}( \hat{\theta}_T \in N_{\theta_0}   ) =1$,  which means that $\P( \hat{\theta}_T \in N_{\theta_0}   ) =1 $ m.a.e.
Then, apply (\ref{eq3}) with $A= \{ \hat{\theta}_T \in N_{\theta_0}   \} $ and $B=\left\{X_{1:T} \in S\right\}$. 
\end{proof}

\begin{rk}\label{rk6} Inspection of the proof shows that consistency is independent of any event, i.e., we can replace $\left\{ X_{1:T} \in S\right\}$  by any event $E \in \mathcal{E}_\Omegabf $ in the proof, and thus in the statement of Proposition  \ref{prop6}. \hfill $\diamond $
 \end{rk}

\section{Details of  the proof of Theorem \ref{thm1}} \label{ap5}
In this appendix,  we provide a detailed    proof of Theorem \ref{thm1}, i.e., we provide details regarding the qualification ``m.a.e."   We only consider the   case \`a la Hoffmann-J{\o}rgensen, as the standard asymptotic case follows easily from it.
\begin{proof}
i)   $\left\{ X_{1:T} \in A_T\right\}$ and $\left\{  \theta_0 \in C_{1-\alpha,T}(X_{1:T})\right\} $ are not independent m.a.e. if, and only if,
\begin{eqnarray*}
& & \liminf_{{T} \rightarrow \infty} \P_*(\{\theta_0 \in C_{1-\alpha, T}(\underline{X}_{{T}})\}\cap \{X_{1:T} \in A_T\}) \neq  \liminf_{{T} \rightarrow \infty} \P_*(\theta_0 \in C_{1-\alpha, T}(\underline{X}_{{T}})\P (X_{1:T} \in A_T) \\
& \stackrel{(a)}{\Leftrightarrow} & \liminf_{{T} \rightarrow \infty} \frac{\P_*(\{\theta_0 \in C_{1-\alpha, T}(\underline{X}_{{T}})\}\cap\{X_{1:T} \in A_T\})}{\P (X_{1:T} \in A_T)} \neq  \liminf_{{T} \rightarrow \infty} \P_*(\theta_0 \in C_{1-\alpha, T}(\underline{X}_{{T}})) \\
& \stackrel{(b)}{\Leftrightarrow} & \liminf_{{T} \rightarrow \infty} \P_*(\theta_0 \in C_{1-\alpha, T}(\underline{X}_{{T}})| X_{1:T} \in A_T) \neq  \liminf_{{T} \rightarrow \infty} \P_*(\theta_0 \in C_{1-\alpha, T}(\underline{X}_{{T}}))\\
& \Leftrightarrow & \P( \theta_0 \in C_{1-\alpha, T}(X_{1:T})|X_{1:T} \in A_T) \neq \P(\theta_0 \in C_{1-\alpha, T}(X_{1:T})) \text{ m.a.e.}
\end{eqnarray*}
\textit{(a)} By assumption, $ \P( X_{1:T} \in A_T)=c>0$. \textit{(b)} $\frac{\P_*(\{\theta_0 \in C_{1-\alpha, T}(\underline{X}_{{T}})\}\cap\{X_{1:T} \in A_T\})}{\P (X_{1:T} \in A_T)}=\frac{1}{\P (X_{1:T} \in A_T)}\sup_{}\{ \P(E):E\subset (\{\theta_0 \in C_{1-\alpha, T}(\underline{X}_{{T}})\}\cap \{X_{1:T} \in A_T\}) \wedge\ E \in \mathcal{E} \} = \sup_{}\{ \frac{\P(E)}{\P (X_{1:T} \in A_T)}:E\subset (\{\theta_0 \in C_{1-\alpha, T}(\underline{X}_{{T}})\}\cap \{X_{1:T} \in A_T\})\wedge\ E \in \mathcal{E} \}=\sup_{}\{ \frac{\P(E\cap\{X_{1:T} \in A_T\})}{\P (X_{1:T} \in A_T)}:E\subset (\{\theta_0 \in C_{1-\alpha, T}(\underline{X}_{{T}})\}\cap \{X_{1:T} \in A_T\}) \wedge\ E \in \mathcal{E} (X_{1:T} \in A_T) \} = \sup_{}\{ \P(E| X_{1:T} \in S):E\subset (\{\theta_0 \in C_{1-\alpha, T}(\underline{X}_{{T}})\}\cap \{X_{1:T} \in A_T\}) \wedge\ E \in \mathcal{E} (X_{1:T} \in A_T)\}=\P_*(\theta_0 \in C_{1-\alpha, T}(\underline{X}_{{T}})| X_{1:T} \in A_T)$, where for all $A \in \mathcal{E} $,  $\mathcal{E}(A):=\{B \cap A: B \in\mathcal{E}  \} $.  

ii) Replace in the proof of (i), $\{\theta_0 \in C_{1-\alpha, T}(\underline{X}_{{T}}) \} $, $\inf $, and $\P_* $ by $\left\{ d_{ T}(\underline{X}_{ T}) = d_{\mathrm{A}}\right\} $, $\sup $, and $\P^* $, respectively. 
\end{proof}

\begin{rk} In the above proof, the assumption $  \P( X_{1:T} \in A_T)=c>0$ can be weakened to $\lim_{T \rightarrow \infty}\P( X_{1:T} \in A_T)=c>0 $. However, the non-existence of the limit  or     the non-measurability of the conditioning event  $\{X_{1:T} \in A_T\}$ would make the proof more difficult. In particular, in the latter case, we would need a  conditional version of    nonadditive outer and inner measures,  and there does not seem to be a consensus on this subject (e.g., Young and Wang, 1998).  These difficulties do  not affect the main conclusion of section \ref{sec2} as they are the counterpart of the difficulties  to establish the Neyman-Pearson validity of  a confidence region or a test.  
\qquad \hfill      $\diamond $

\end{rk}

%
%
%
%
\section{Neyman-Pearson inadequacy for past-realized data} \label{ap4}

As pointed out in Remark \ref{rk2} on p. \pageref{rk2},  Theorem \ref{thm1} can be viewed as a formalization of the Neyman-Pearson \textit{inadequacy  for past-realized data} when only\textit{ part} of the data have been realized before the determination of the confidence regions and tests. This appendix formalizes this  inadequacy in the case in which \textit{all} data at use have been previously realized. For simplicity, we  rule out  the case \`a la Hoffmann-J{\o}rgensen  in which finite-sample statistics do not need to be measurable. We also require the following assumption for the determination of the confidence intervals. 

\begin{assp} \label{assp6} Let $\P$ be  the probability measure on $(\Omegabf, \mathcal{E})$ s.t. $\P \circ X_{1:T}^{-1} $ is the unconditional  physical and unknown distribution of $X_{1:T} $.   \textbf{\emph{(a)}}There exists a mapping $G $ from the space of all probability measures on $(\Omegabf, \mathcal{E})$ to the parameter space $\T $  s.t.  $G(\P):=\theta_0$.  \textbf{\emph{(b)}} There exists a family of probability measures $(\Pter_{\theta} )_{\theta \in \T}$ on $(\Omegabf, \mathcal{E}_\Omegabf) $  s.t., for all $\theta  \in \T $, $\theta=G(\Pter_{\theta}) $, and $ \P$ is dominated by $\Pter_{\theta} $, i.e.,  $\P \ll\Pter_{ \theta} $. 

\end{assp}

Assumption \ref{assp6} is often  satisfied. Assumption \ref{assp6}(a) requires the parameter  $\theta_0 $ to depend on the underlying probability measure that defines the distribution of the data $X_{1:T}$. Without this assumption, it seems difficult to see how  $\theta_0 $ can be inferred from the data $X_{1:T} $.  Assumption \ref{assp6}(b)
first requires  Assumption \ref{assp6}(a) to hold  independently of the location of $\theta_0 $ in $\T $.   It corresponds to the idea that the parameter space $\T $ is the set of  possible values for $\theta_0 $. Second, Assumption \ref{assp6}(b) requires all the measures in the family $(\Pter_{\theta} )_{\theta \in \T }$  to dominate the unknown probability measure $\P$. This can be restrictive. Nevertheless,  Assumption \ref{assp6}(b) is often satisfied, and it is  weaker than  some assumptions in the  literature on maximum-likelihood (e.g., Lehmann and Casella, 1983/1998, sec. 6.3; Gouri\'eroux and Monfort, 1989/1996, sec. 7.D.) or empirical processes (e.g., Khmaladze, 1981). These references require, for  all $ (\theta, \dot \theta) \in \T^2$, the existence of a unique  pair of probability measures    $ (\Pter_{\theta},\Pter_{\dot\theta} )$ s.t. $ \theta=G(\Pter_\theta) $, $\dot\theta=G(  \Pter_{\dot\theta}) $, and $\Pter_\theta $ is equivalent to $\Pter_{\dot \theta} $, i.e.,  $\Pter_\theta \sim\Pter_{\dot \theta}$. We require Assumption \ref{assp6}(b) to ensure that, for all $\theta \in \T $, $\Pter_\theta$-null sets are also $\P$-null sets.

 Under Assumption \ref{assp6}, the following Proposition \ref{prop1} formalizes the Neyman-Pearson inadequacy  that arises when all data at use have been previously  realized.

\begin{prop}[Neyman-Pearson inadequacy for past-realized data] \label{prop1}
\begin{itemize}
\item[i)] Let  $\alpha \in [0, 1[ $, and $C_{1-\alpha,T}(X_{1:T}) $ be a $1-\alpha$  pivotal Neyman-Pearson confidence region under  $(\Pter_{\theta}(.| X_{1:T}) )_{\theta \in \T}$, i.e.,  (i) for all $\omega \in \Omegabf$, $C_{1-\alpha,T}(X_{1:T}(\omega))\subset \T$, (ii) for all $ \theta \in \T $,  $\{ x_{1:T} \in \underline{\Sbf}_T: \theta \in C_{1-\alpha,T}(x_{1:T})\} \in \underline{\mathcal{S}}_T$, and (iii)   for all $ \theta \in \T $, $\Pter_{\theta}(  {\theta} \in C_{1-\alpha,T}(X_{1:T})| X_{1:T}) \geqslant 1-      \alpha$ m.a.e.
Under Assumptions \ref{assp1} and \ref{assp6},   $C_{1-\alpha,T}(X_{1:T})= \T $ $\P$-a.s. m.a.e. \item[ii)] Let $d_{T}$ be a Neyman-Pearson test of level $\alpha \in [0,1[$ under $\P(.|X_{1:T}) $, i.e.,  $\P ( d_T(X_{1:T}) = d_\mathrm{A}| X_{1:T}) \leqslant \alpha$ m.a.e., if $\mathrm{H}$ is true. Under Assumption \ref{assp1}, if $\mathrm{H}$ is true,  $ d_T(X_{1:T})=d_{\mathrm{H}}  $  $\P$-a.s. m.a.e., which implies that   there does not exist a Neyman-Pearson test of level $\alpha \in [0,1[$ under $\P(.|X_{1:T}) $ s.t. the probability of type I error is
nonnegative m.a.e. 
\end{itemize}

\end{prop}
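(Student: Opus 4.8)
The plan is to exploit the fact that once we condition on the \emph{entire} data $X_{1:T}$, both the coverage event and the rejection event become $\sigma(X_{1:T})$-measurable, so the relevant conditional probabilities collapse to $\{0,1\}$-valued indicator functions; the level constraints, together with $\alpha<1$, then pin these indicators down completely. For part (i), I would first fix $\theta \in \T$ and use condition (ii) to note that $\{\omega\in\Omegabf : \theta \in C_{1-\alpha,T}(X_{1:T}(\omega))\}=X_{1:T}^{-1}\bigl(\{x_{1:T}\in\underline{\Sbf}_T: \theta \in C_{1-\alpha,T}(x_{1:T})\}\bigr)$ belongs to $\sigma(X_{1:T})$. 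Hence the $\sigma(X_{1:T})$-conditional expectation of its indicator is the indicator itself, i.e. $\Pter_{\theta}(\theta \in C_{1-\alpha,T}(X_{1:T})\mid X_{1:T})=\ind_{\{\theta \in C_{1-\alpha,T}(X_{1:T})\}}$ $\Pter_{\theta}$-a.s., a random variable valued in $\{0,1\}$. Condition (iii) forces this quantity to be $\geqslant 1-\alpha$ m.a.e.; since $\alpha<1$ gives $1-\alpha>0$, the indicator cannot equal $0$, so it equals $1$, and therefore $\theta \in C_{1-\alpha,T}(X_{1:T})$ holds $\Pter_{\theta}$-a.s. m.a.e. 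Assumption \ref{assp6}(b) supplies $\P \ll \Pter_{\theta}$, so the $\Pter_{\theta}$-null exceptional set is $\P$-null as well, yielding $\theta \in C_{1-\alpha,T}(X_{1:T})$ $\P$-a.s. m.a.e.

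The remaining step is to pass from ``for every fixed $\theta$, $\theta \in C_{1-\alpha,T}(X_{1:T})$ $\P$-a.s.'' to ``$C_{1-\alpha,T}(X_{1:T})=\T$ $\P$-a.s.'', that is, to interchange the universal quantifier over $\theta$ with the almost-sure quantifier over $\omega$. I expect this interchange to be the main obstacle, because $\T$ is an \emph{uncountable} Borel subset of $\R^p$ and the per-$\theta$ exceptional null sets cannot simply be unioned. When $\T$ is countable the interchange is immediate from countable subadditivity. In general I would route it through the joint measurability furnished by (ii): setting $A:=\{(\omega,\theta): \theta\notin C_{1-\alpha,T}(X_{1:T}(\omega))\}$, each $\theta$-section has $\P$-measure zero, so a Fubini argument against a reference measure on $\T$ gives that the $\omega$-sections are null for $\P$-a.e.\ $\omega$; obtaining the literal equality $C_{1-\alpha,T}(X_{1:T})=\T$ rather than equality up to a parameter-null set is where the m.a.e.\ qualification and any additional regularity of $C_{1-\alpha,T}$ must be absorbed.

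For part (ii) the same mechanism applies, now under $\P$ directly and with the reversed inequality, and it goes through cleanly without any quantifier interchange. Since $d_T(X_{1:T})$ is a function of $X_{1:T}$, the event $\{d_T(X_{1:T})=d_\mathrm{A}\}$ lies in $\sigma(X_{1:T})$, so $\P(d_T(X_{1:T})=d_\mathrm{A}\mid X_{1:T})=\ind_{\{d_T(X_{1:T})=d_\mathrm{A}\}}$ is $\{0,1\}$-valued $\P$-a.s. The level constraint forces this indicator to be $\leqslant \alpha<1$ when $\mathrm{H}$ is true, hence equal to $0$, so that $d_T(X_{1:T})=d_\mathrm{H}$ $\P$-a.s. m.a.e. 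Consequently the probability of type I error equals $0$ m.a.e., and therefore no Neyman--Pearson test of level $\alpha\in[0,1[$ under $\P(\,\cdot\mid X_{1:T})$ can carry a strictly positive type I error. Note that this half of the proposition invokes only Assumption \ref{assp1}: because the conclusion is a statement about a single fixed decision rule rather than a family indexed by $\theta$, neither the domination in Assumption \ref{assp6}(b) nor the delicate quantifier swap of part (i) is needed.
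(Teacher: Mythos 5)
Your proposal is correct and follows essentially the same route as the paper's own proof: condition (ii) makes the coverage (resp.\ rejection) event $\sigma(X_{1:T})$-measurable, so the conditional probability collapses to a $\{0,1\}$-valued indicator that the level constraint with $\alpha<1$ pins down, and Assumption \ref{assp6}(b) transfers the $\Pter_\theta$-null exceptional set to a $\P$-null one. The only divergence is that you explicitly flag the interchange of ``for all $\theta\in\T$'' with ``$\P$-a.s.'' over an uncountable parameter space as a genuine obstacle requiring a Fubini-type argument, whereas the paper passes from the per-$\theta$ statement to $C_{1-\alpha,T}(X_{1:T})=\T$ $\P$-a.s.\ without comment -- so your version is, if anything, more careful than the original on that step.
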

\begin{proof} i) By  definition of conditional probabilities, for all $ \theta \in \T $, $\Pter_{\theta}$-a.s.,
\begin{eqnarray*}
\Pter_{\theta}({\theta} \in C_{1-\alpha,T}(X_{1:T})| X_{1:T}) &= & \Eter_\theta[ \ind_{C_{1-\alpha,T}(X_{1:T})}({\theta})| X_{1:T}]\\
&= & \ind_{C_{1-\alpha,T}(X_{1:T})}({\theta}) \text{ }\\
& = & \begin{cases}1 & \text{ if } {\theta} \in C_{1-\alpha,T}(X_{1:T})  \\
0 & \text{otherwise} \\
\end{cases} \end{eqnarray*}
where the second equality comes from the upcoming Lemma \ref{lem3}i).
Then, $\Pter_{\theta}({\theta} \in C_{1-\alpha,T}(X_{1:T})| X_{1:T}) \geqslant 1-      \alpha $ m.a.e.,  for all $ \theta \in \T $, if, and only if, $\theta \in C_{1-\alpha,T}(X_{1:T}) $ $\Pter_{\theta}$-a.s. m.a.e. for all $ \theta \in \T $. Now, by Assumption \ref{assp6}(b),  for all $\theta  \in \T $, $\P \ll\  \Pter_{\theta} $. Thus,  $ C_{1-\alpha,T}(X_{1:T})=\T$ $\P$-a.s. m.a.e.

ii) By definition of conditional probabilities, \text{ $\P$-a.s.},  
\begin{eqnarray*}
\P ( d_T(X_{1:T}) = d_\mathrm{A}| X_{1:T})
&= & \E[ \ind_{\{ d_T(X_{1:T}) = d_\mathrm{A} \}}| X_{1:T}] \\
&= & \ind_{\{ d_T(X_{1:T}) = d_\mathrm{A} \}}\\
& = & \begin{cases}1 & \text{ if }  d_T(X_{1:T}) = d_\mathrm{A}  \\
0 & \text{ if }  d_T(X_{1:T}) = d_\mathrm{H}  \\
\end{cases}
\end{eqnarray*}
where the second equality comes from the upcoming Lemma \ref{lem3}ii). Now,  $\P ( d_T(X_{1:T}) = d_\mathrm{A}| X_{1:T}) \leqslant \alpha$ m.a.e., if, and only if,  $d_T(X_{1:T}) = d_\mathrm{H}  $ $\P$-a.s. m.a.e. Thus,  the result follows.\end{proof}

Proposition   \ref{prop1}ii) shows that, when all data at use have been previously realized,   only Neyman-Pearson tests with zero probability type I error m.a.e. are possible.  Although it is possible to design such tests,  most available tests have  a nonnegative probability of type I error m.a.e. In addition, the Neyman-Pearson approach to testing is to minimize the probability of type II error rather than the probability of type I error (Neyman and Pearson, 1933).    

Proposition \ref{prop1}i) shows that, when all data at use have been previously realized,  the only possible Neyman-Pearson confidence region is  the whole parameter space $\T $ $\P$-a.s. m.a.e. Such a confidence region is uninformative.  The assumptions of Proposition \ref{prop1}i) that are new w.r.t. Assumptions  \ref{assp1} and \ref{assp6} are mild.  When the unknown parameter $\theta_0 $ can be any value inside the parameter space $\T $, it seems difficult to see how a confidence region cannot be  pivotal. Thus, Assumption  \ref{assp1} and \ref{assp6} are often part of the definition of confidence regions. E.g., Ferguson, 1967, sec. 5.8; Gouri\'eroux and Monfort, 1989/1996, sec. 20.

  The combination of Theorem \ref{thm1} and  Proposition \ref{prop1}
suggests that Neyman-Pearson theory is inadequate for past-realized data. This inadequacy  is stronger than the inadequacy for multiple use of the same data, because used data are necessarily a subset of the realized data.  Moreover, conditioning on the past realization instead of on the knowledge is more in line with Neyman-Pearson theory for at least two  reasons. First, unlike conditioning on  knowledge, conditioning on  past realizations is not individual-specific, which is a feature often presented as an advantage of the Neyman-Pearson theory over Bayesian theory. Second, unlike Bayesian theory, the Neyman-Pearson theory  distinguishes between unknown and random
quantities, so that  it is difficult to understand why past-realized data, which are now fixed, should be regarded as random.  

\begin{rk} If  randomized test decision rules are allowed, Proposition \ref{prop1}(ii) is weaker. In this case, if, instead of a level of the test,  the probability of type I error is fixed, it can be shown that randomized test decision rules $d_T $ and the data   $\underline{X_T} $  are independent under $\mathrm{H} $.   In this paper, we do not consider   randomized test decision rules in details for brevity and relevance:  they  are rarely used in econometrics. Moreover, Theorem \ref{thm1} remains mainly unchanged for randomized  test decision rules.  

\hfill $\diamond $    

\end{rk}

\begin{rk}While the Neyman-Pearson inadequacy for
multiple use of the same data has been mentioned in the literature, the
inadequacy for past-realized data is novel to the best of our knowledge. \hfill $\diamond $  

\end{rk}

\begin{lem} \label{lem3}
\begin{itemize}
\item[i)]  Under the notations and assumptions of Proposition \ref{prop1}(i), for all $\dot \theta \in \T $, the indicator function $ \ind_{C_{1-\alpha,T}(X_{1:T}(.))}(\dot{\theta}): \Omegabf \rightarrow \{0,1\}$ is  $\sigma(X_{1:T})/\mathcal{P}(\{0,1\}) $-measurable.

\item[ii)] Under the notations and assumptions of Proposition \ref{prop1}(ii),  the indicator function $ \ind_{\{ d_T(X_{1:T}(.)) = d_\mathrm{A} \}}: \Omegabf \rightarrow \{0,1\}$ is  $\sigma(X_{1:T})/\mathcal{P}(\{0,1\}) $-measurable.

\end{itemize}
 \end{lem}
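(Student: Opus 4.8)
The plan is to establish both parts by direct verification from the definition of the generated $\sigma$-algebra $\sigma(X_{1:T})$. Since the target space is $(\{0,1\},\mathcal{P}(\{0,1\}))$, whose only nontrivial elements are $\{0\}$ and $\{1\}$, and since the preimages of $\emptyset$ and $\{0,1\}$ are $\emptyset$ and $\Omegabf$ (both in $\sigma(X_{1:T})$), it suffices in each part to show that the preimage of $\{1\}$ lies in $\sigma(X_{1:T})$; the preimage of $\{0\}$ is then its complement and hence also lies in $\sigma(X_{1:T})$.

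For (i), I would fix $\dot\theta \in \T$ and set $A_{\dot\theta} := \{x_{1:T} \in \underline{\Sbf}_T : \dot\theta \in C_{1-\alpha,T}(x_{1:T})\}$. Assumption (ii) of Proposition \ref{prop1}(i) gives exactly $A_{\dot\theta} \in \underline{\mathcal{S}}_T$. The key step is to rewrite the preimage of $\{1\}$ as a preimage under $X_{1:T}$: $\{\omega \in \Omegabf : \ind_{C_{1-\alpha,T}(X_{1:T}(\omega))}(\dot\theta)=1\} = \{\omega \in \Omegabf : \dot\theta \in C_{1-\alpha,T}(X_{1:T}(\omega))\} = X_{1:T}^{-1}(A_{\dot\theta})$. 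Since $A_{\dot\theta} \in \underline{\mathcal{S}}_T$, this set belongs to $\sigma(X_{1:T}) = \{X_{1:T}^{-1}(S) : S \in \underline{\mathcal{S}}_T\}$ by the very definition of the $\sigma$-algebra generated by $X_{1:T}$, which closes the argument.

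For (ii), the same template applies with $d_T$ in place of $C_{1-\alpha,T}$. By Definition \ref{defn9}, $d_T$ is $\underline{\mathcal{S}}_T/\mathcal{P}(\D)$-measurable, so $B := d_T^{-1}(\{d_\mathrm{A}\}) = \{x_{1:T} \in \underline{\Sbf}_T : d_T(x_{1:T}) = d_\mathrm{A}\} \in \underline{\mathcal{S}}_T$. The preimage of $\{1\}$ is then $\{\omega \in \Omegabf : d_T(X_{1:T}(\omega)) = d_\mathrm{A}\} = X_{1:T}^{-1}(B) \in \sigma(X_{1:T})$, as required.

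There is essentially no obstacle here: the result is pure definition chasing. The only point that demands care is to phrase each event as a preimage \emph{under $X_{1:T}$} of a set in $\underline{\mathcal{S}}_T$, so that we land in the generated $\sigma$-algebra $\sigma(X_{1:T})$ and not merely in the ambient $\sigma$-algebra $\mathcal{E}_\Omegabf$. Securing this finer measurability is exactly what Lemma \ref{lem3} is needed for, since it underpins the identity $\E[\ind_{C_{1-\alpha,T}(X_{1:T})}(\theta)\mid X_{1:T}] = \ind_{C_{1-\alpha,T}(X_{1:T})}(\theta)$ (and its testing analogue) invoked in the proof of Proposition \ref{prop1}.
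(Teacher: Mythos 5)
Your proof is correct and follows essentially the same route as the paper's: both arguments write the preimage of $\{1\}$ as $X_{1:T}^{-1}$ of the set $\{x_{1:T}\in\underline{\Sbf}_T:\dot\theta\in C_{1-\alpha,T}(x_{1:T})\}$ (respectively $d_T^{-1}(\{d_\mathrm{A}\})$), invoke the measurability clause (ii) of the confidence-region definition (respectively the $\underline{\mathcal{S}}_T/\mathcal{P}(\D)$-measurability of $d_T$), and conclude via the fact that $\{1\}$ generates $\mathcal{P}(\{0,1\})$. No gaps.
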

\begin{proof} i)  Let $\dot \theta \in \T $. For this proof, define the functions $f:  \underline{\Sbf}_T \rightarrow \{0,1 \} $ and $h: \Omegabf \rightarrow \{0,1 \}   $ s.t.  $f(.):=\ind_{C_{1-\alpha,T}(.)}(\dot{\theta}) $ and $h(.):=\ind_{C_{1-\alpha,T}(X_{1:T}(.))}(\dot{\theta}) $. Then,  $h^{-1}(1)= (f \circ X_{1:T})^{-1}(1)=X_{1:T}^{-1}[ f^{-1}(1)]=X_{1:T}^{-1}(\{x_{1:T} \in \underline{\Sbf}_T: \dot\theta \in C_{1-\alpha,T}(x_{1:T})  \}) \in \sigma(X_{1:T})$,
where the last equality follows 
the defining property (ii) of confidence regions (see Definition \ref{defn2} on p. \ref{defn2}) and Assumption \ref{assp1}(c). Now, $\sigma(\{1\})=\mathcal{P}(\{ 0,1\}) $, and inverse mapping preserves the set operations that generates $\sigma$-algebra (e.g., Kallenberg, 1997/2002, p. 3, eq. (1)). Thus, the result follows. 

ii) Follow the same reasoning as for (i) with $ f(.):=\ind_{\{ d_T(.) = d_\mathrm{A} \}}$ and $h(.):=\ind_{\{ d_T(X_{1:T}(.)) = d_\mathrm{A} \}} $.
\end{proof}

%

\section{Existence of neoclassical confidence regions} \label{ap3}

In this Appendix, we prove the existence of neoclassical confidence regions under mild assumptions. We adapt a proof from Holcblat (2012). 
For notational convenience, we omit the qualification m.a.e., although all equalities should be understood m.a.e.

\begin{prop}[Existence of neoclassical confidence regions]\label{prop2} Under Assumptions \ref{assp1}, \ref{assp4}, and \ref{assp7}, for all $\alpha \in [0,1] $, there exists a neoclassical confidence region, $R_{1-\alpha,T}$.

\end{prop}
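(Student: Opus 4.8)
The plan is to exhibit the set prescribed by Definition~\ref{defn5} and verify that it is genuinely a Borel subset of $\T$; the only real work is to show that the threshold $k_{\alpha,T}$ is well defined. Throughout I would suppress the qualification ``m.a.e.'' and abbreviate $f:=f_{\theta^\bullet_T}$. First I would introduce, for each $k\in\R$, the super-level set $A_k:=\{\theta\in\T:f(\theta)\geqslant k\}$. Since $f$ is $\mathcal{E}_\T/\mathcal{E}_\R$-measurable by Assumption~\ref{assp7}, each $A_k$ lies in $\mathcal{E}_\T=\mathcal{B}(\T)$, and the support $\supp(f)=\{f>0\}=\bigcup_{n\geqslant 1}A_{1/n}$ is Borel as a countable union of Borel sets; hence $A_k\cap\supp(f)$ is Borel for every $k$. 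This already settles the measurability requirement, so existence reduces entirely to showing that $k_{\alpha,T}$ is a well-defined number.

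To that end I would study $G(k):=\int_{A_k}f\,\d\mu=\P(\theta^\bullet_T\in A_k)$, which is legitimate by Assumption~\ref{assp7} and is non-increasing in $k$ because $k\leqslant k'$ forces $A_{k'}\subseteq A_k$. Nonemptiness of $K:=\{k\in\R:G(k)\geqslant 1-\alpha\}$ is immediate: for $k\leqslant 0$ one has $A_k=\T$ (as $f\geqslant 0$), so $G(k)=\int_\T f\,\d\mu=1\geqslant 1-\alpha$, whence $0\in K$. For boundedness from above I would split on $\mu$. If $\mu=\nu$, then $f(\theta)=\P(\theta^\bullet_T=\theta)\leqslant 1$, so $A_k=\emptyset$ and $G(k)=0$ once $k>1$. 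If $\mu=\lambda$, then $\int f\,\d\lambda=1<\infty$ gives $f<\infty$ $\lambda$-a.e., so $\lambda(\{f=\infty\})=0$, and dominated convergence (with dominating function $f\in L^1$) yields $\lim_{k\to\infty}G(k)=\int_{\{f=\infty\}}f\,\d\lambda=0$. In either case $G(k)\to 0$.

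Consequently, for $\alpha\in[0,1)$ we have $1-\alpha>0$, so some $k_0$ satisfies $G(k_0)<1-\alpha$; by monotonicity $K\subseteq(-\infty,k_0)$, and therefore $k_{\alpha,T}=\sup K$ is a finite real number and $R_{1-\alpha,T}:=A_{k_{\alpha,T}}\cap\supp(f)$ is the desired Borel confidence region. The degenerate value $\alpha=1$ I would dispatch separately: there $1-\alpha=0\leqslant G(k)$ for every $k$, so $K=\R$ and one simply takes $R_{1,T}=\emptyset$, which is Borel and trivially satisfies the defining inclusion. The main obstacle is precisely the boundedness-from-above step, since it is the only place where the structure of $f$ (its integrability/boundedness and the finiteness $f<\infty$ a.e.) is genuinely used; if desired, one can additionally confirm the coverage $G(k_{\alpha,T})\geqslant 1-\alpha$ by noting that $G$ is left-continuous (continuity from above of the finite measure $\P\circ{\theta^\bullet_T}^{-1}$ along $A_{k_n}\downarrow A_k$ for $k_n\uparrow k$), but this is not needed for existence per se.
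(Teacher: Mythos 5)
Your proof is correct and follows essentially the same route as the paper's: show that the defining supremum is taken over a nonempty set that is bounded above (nonemptiness from $G(0)=1$, boundedness from dominated convergence, which is exactly the content of the paper's Lemma \ref{lem2}(ii)), and obtain measurability of the super-level set directly from Assumption \ref{assp7}. The only substantive difference is that the paper also verifies, via left-continuity of $k\mapsto\P(\theta^\bullet_T\in A_k)$ (its Lemma \ref{lem2}(i)) along a sequence $k_n\uparrow k_{\alpha,T}$, that the set at the supremal threshold still carries mass at least $1-\alpha$ — the coverage property the definition is meant to guarantee — a step you correctly identify and sketch but defer as optional; it should be included to match what the proposition is understood to deliver.
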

\begin{proof} 
Define, for all $\alpha \in [0,1] $,  
\begin{eqnarray*}
\dot k := \sup_{k\in \R}\left\{k: \tP\left( \left\{ \theta\in \mathbf{\mathbf{\Theta}}: f_{\theta^\bullet_T}(\theta)\geqslant k \right\}\right) \geqslant1-\alpha \right\}
\end{eqnarray*}
On the one hand, under Assumptions \ref{assp1}, \ref{assp4}, and  \ref{assp7}, $\tP\left( \left\{ \theta\in \mathbf{\mathbf{\Theta}}: f_{\theta^\bullet_T}(\theta)\geqslant 0 \right\}\right)=1  $. On the other hand, under Assumptions \ref{assp1}, \ref{assp4}, and  \ref{assp7}, by the upcoming Lemma \ref{lem2}, for all $\alpha \in [0,1] $, there exists $\ddot k \in \R_+\cup \{\infty\} $ s.t. $k \geqslant \ddot{k}  $ implies $\tP\left( \left\{ \theta\in \mathbf{\mathbf{\Theta}}: f_{\theta^\bullet_T}(\theta)\geqslant {k}\right\}\right) \leqslant 1-\alpha$. Thus, there exists an increasing sequence $\left( k_{n}\right)_{n\geqslant1} $ with  to $k_n \uparrow\dot k $ such that $\forall n \geqslant 1 $, $\tP\left( \left\{ \theta: f_{\theta^\bullet_T}(\theta)\geqslant k_n \right\}\right) \geqslant 1-\alpha $. Then,  by Lemma \ref{lem2}i),
\begin{eqnarray*}
\tP\left( \left\{ \theta\in \mathbf{\mathbf{\Theta}}: f_{\theta^\bullet_T}(\theta)\geqslant \dot k\right\}\right)  & \stackrel{}{=} &  \lim_{n\rightarrow \infty} \tP\left( \left\{ \theta\in \mathbf{\mathbf{\Theta}}: f_{\theta^\bullet_T}(\theta)\geqslant k_n \right\}\right)\\
& \geqslant & 1-\alpha. 
\end{eqnarray*}
Now, under Assumption  \ref{assp7},     $ \left\{ \theta\in \mathbf{\mathbf{\Theta}}: f_{\theta^\bullet_T}(\theta)\geqslant \dot k\right\} \in \mathcal{E}_{\T} $, so that, by construction,  it is a neoclassical confidence region. 
\end{proof}

%

\begin{lem}\label{lem2} Under Assumptions \ref{assp1}, \ref{assp4}, and  \ref{assp7}, \begin{enumerate}
\item[i)]  $\forall k\geqslant0 $,  $k \mapsto \tP\left( \left\{ \theta\in \mathbf{\mathbf{\Theta}}: f_{\theta^\bullet_T}(\theta)\geqslant k \right\}\right) $ is a left-continuous decreasing function;
\item[ii)] for all $\alpha \in [0,1] $, there exists $\ddot k \in \R_+ \cup \{ \infty\} $ s.t. $\tP\left( \left\{ \theta\in \mathbf{\mathbf{\Theta}}: f_{\theta^\bullet_T}(\theta)\geqslant \ddot{k}\right\}\right)< 1-\alpha$.
\end{enumerate}

\end{lem}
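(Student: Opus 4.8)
Under Assumptions 1, 4, and 7, I must show: (i) the function $k \mapsto \P\circ{\theta^\bullet_T}^{-1}(\{\theta \in \T : f_{\theta^\bullet_T}(\theta) \geqslant k\})$ is a left-continuous decreasing function for $k \geqslant 0$; and (ii) for every $\alpha \in [0,1]$ there exists $\ddot k \in \R_+ \cup \{\infty\}$ with $\P\circ{\theta^\bullet_T}^{-1}(\{\theta : f_{\theta^\bullet_T}(\theta) \geqslant \ddot k\}) < 1-\alpha$.

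Let me think about this.

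The plan is to treat $g(k) := \P\circ{\theta^\bullet_T}^{-1}(\{\theta \in \T : f_{\theta^\bullet_T}(\theta) \geqslant k\})$ as the "upper level-set measure" of the p.d.f. $f_{\theta^\bullet_T}$, and to prove that it is a survival-function-type object, which inherits monotonicity and left-continuity from standard measure-theoretic continuity properties. Define the superlevel sets $A_k := \{\theta \in \T : f_{\theta^\bullet_T}(\theta) \geqslant k\}$, which are measurable by Assumption 7 since $f_{\theta^\bullet_T}$ is $\mathcal{E}_{\T}/\mathcal{E}_{\R}$-measurable. For monotonicity, I would observe that $k_1 \leqslant k_2$ implies $A_{k_2} \subseteq A_{k_1}$, so $g(k_2) = \P\circ{\theta^\bullet_T}^{-1}(A_{k_2}) \leqslant \P\circ{\theta^\bullet_T}^{-1}(A_{k_1}) = g(k_1)$ by monotonicity of the measure. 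This gives the "decreasing" claim immediately.

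For left-continuity, I would use continuity of measures from below. Fix $k > 0$ and take any increasing sequence $k_n \uparrow k$. Then $A_{k_n}$ is an increasing sequence of sets (since $k_n$ increasing means superlevel sets grow), and I claim $\bigcup_n A_{k_n} = A_k$. The inclusion $A_k \subseteq \bigcup_n A_{k_n}$ holds because $f_{\theta^\bullet_T}(\theta) \geqslant k \geqslant k_n$ puts $\theta \in A_{k_n}$ for every $n$; the reverse inclusion $\bigcup_n A_{k_n} \subseteq A_k$ is the key step and requires care: if $f_{\theta^\bullet_T}(\theta) \geqslant k_n$ for some $n$, this alone does \emph{not} give $f_{\theta^\bullet_T}(\theta) \geqslant k$. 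Rather, I must take a point $\theta \in \bigcup_n A_{k_n}$, so $f_{\theta^\bullet_T}(\theta) \geqslant k_n$ for some (hence by monotonicity for all large) $n$; but to conclude $f_{\theta^\bullet_T}(\theta)\geqslant k$ I need $\theta$ to lie in $A_{k_n}$ for \emph{cofinally many} $n$, and since $k_n \uparrow k$, the value $f_{\theta^\bullet_T}(\theta) \geqslant k_n$ for all $n$ forces $f_{\theta^\bullet_T}(\theta) \geqslant \sup_n k_n = k$. Thus $\bigcup_n A_{k_n} = \{\theta : f_{\theta^\bullet_T}(\theta) \geqslant k_n \text{ for all } n\} = A_k$, and continuity from below gives $g(k_n) = \P\circ{\theta^\bullet_T}^{-1}(A_{k_n}) \uparrow \P\circ{\theta^\bullet_T}^{-1}(A_k) = g(k)$, establishing left-continuity. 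I expect this identification of the union of superlevel sets to be the main obstacle, since the naive single-$n$ argument fails and one must use that membership in the union of a \emph{nested} family along an increasing sequence is equivalent to the sup condition.

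For part (ii), the plan is to argue by contradiction or by a direct integrability bound. Since $f_{\theta^\bullet_T}$ is a p.d.f. with $\int_\T f_{\theta^\bullet_T}\,d\mu = 1$, the measure $g(k)$ cannot stay bounded below by $1-\alpha$ for arbitrarily large $k$ unless $\alpha=0$ trivially or the mass concentrates at infinite density. Concretely, if $\mu = \lambda$ (Lebesgue), then by Markov's/Chebyshev's inequality the $\lambda$-measure of $A_k$ is at most $1/k$, and since $\P\circ{\theta^\bullet_T}^{-1}(A_k) = \int_{A_k} f_{\theta^\bullet_T}\,d\lambda$, dominated convergence (using $\mathbf{l}_{A_k} f_{\theta^\bullet_T} \to 0$ pointwise $\lambda$-a.e. as $k\to\infty$, because $f_{\theta^\bullet_T}(\theta)$ is finite $\lambda$-a.e.) yields $g(k) \to 0$ as $k \to \infty$. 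Hence for any $\alpha \in [0,1]$ one can choose $\ddot k$ finite large enough that $g(\ddot k) < 1-\alpha$ whenever $\alpha < 1$; and when $\alpha = 1$ the bound $g(\ddot k) < 0$ is vacuously unattainable but one simply takes $\ddot k = \infty$, since $g(\infty) = \P\circ{\theta^\bullet_T}^{-1}(\{\theta : f_{\theta^\bullet_T}(\theta) = \infty\}) = 0 < 1-\alpha = 0$ is handled by the convention allowing $\ddot k = \infty$. The counting-measure case $\mu = \nu$ is analogous and easier, as each atom carries positive mass and only finitely many atoms can have density exceeding any $k>0$. I would close by noting both subcases reduce to the fact that the density is finite $\mu$-almost everywhere and integrable, so its superlevel sets shed all mass as the threshold grows.
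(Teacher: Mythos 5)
Your part (ii) is essentially the paper's argument (a Markov-type bound on $\mu(\{\theta: f_{\theta^\bullet_T}(\theta)\geqslant k\})$ followed by dominated convergence using the $\mu$-a.e.\ finiteness and integrability of the density), so no issue there. The problem is in part (i). For an increasing sequence $k_n\uparrow k$, the superlevel sets $A_{k_n}:=\{\theta\in\T: f_{\theta^\bullet_T}(\theta)\geqslant k_n\}$ form a \emph{decreasing} chain ($A_{k_1}\supseteq A_{k_2}\supseteq\cdots$), not an increasing one: raising the threshold shrinks the superlevel set. Consequently your claimed identity $\bigcup_n A_{k_n}=A_k$ is false --- since the sets are nested downward, $\bigcup_n A_{k_n}=A_{k_1}$, which strictly contains $A_k$ whenever some $\theta$ satisfies $k_1\leqslant f_{\theta^\bullet_T}(\theta)<k$ --- and the set you actually characterize at the end, $\{\theta: f_{\theta^\bullet_T}(\theta)\geqslant k_n\text{ for all }n\}$, is the \emph{intersection} $\bigcap_n A_{k_n}$, not the union. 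Your final convergence statement is also oriented the wrong way: since the map is decreasing and $A_{k_n}\supseteq A_k$, one gets $\P\circ{\theta^\bullet_T}^{-1}(A_{k_n})\downarrow \P\circ{\theta^\bullet_T}^{-1}(A_k)$, not an increasing limit.

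The correct route --- and the one the paper takes --- is precisely the intersection identity you stumble onto: $\bigcap_{n\geqslant1}A_{k_n}=A_k$, because $f_{\theta^\bullet_T}(\theta)\geqslant k_n$ for every $n$ if and only if $f_{\theta^\bullet_T}(\theta)\geqslant\sup_n k_n=k$. Then downward continuity of the measure along decreasing sets (valid here because $\P\circ{\theta^\bullet_T}^{-1}$ is a probability measure, hence finite) gives $\lim_n\P\circ{\theta^\bullet_T}^{-1}(A_{k_n})=\P\circ{\theta^\bullet_T}^{-1}(A_k)$, which is left-continuity. Your underlying insight (the supremum characterization of the limiting level set) is the right one, but as written the union/intersection and increasing/decreasing bookkeeping is inverted and the step does not go through; it needs to be rewritten with decreasing sets and continuity from above.
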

\begin{proof}

\textit{i)} Under   Assumption  \ref{assp7}, $\tP $ is probability measure, so that $k \mapsto \tP\left( \left\{ \theta\in \mathbf{\mathbf{\Theta}}: f_{\theta^\bullet_T}(\theta)\geqslant k \right\}\right) $ is decreasing by monotonicity of measures. Prove left-continuity. Let $\left( k_{n}\right)_{n\geqslant1}$ s.t. $k_n \uparrow\overline k \in \R_+ \cup \{\infty\}$. Then,  
\begin{eqnarray*}
\tP\left( \left\{ \theta\in \mathbf{\mathbf{\Theta}}: f_{\theta^\bullet_T}(\theta)\geqslant \overline{k}\right\}\right)  & = & \tP\left(\bigcap_{n \geqslant 1} \left\{ \theta\in \mathbf{\mathbf{\Theta}}: f_{\theta^\bullet_T}(\theta)\geqslant k_n \right\}\right)\\
& \stackrel{}{=} &  \lim_{n\rightarrow \infty} \tP\left( \left\{ \theta\in \mathbf{\mathbf{\Theta}}: f_{\theta^\bullet_T}(\theta)\geqslant k_n \right\}\right)
\end{eqnarray*}
where the last equality follows from a standard continuity property of measures under Assumption  \ref{assp7} (e.g.  in Kallenberg, 1997/2002, p. 8,  Lemma 1.14). \textit{}

\textit{ii)} For all real number $k >0 $, 
\begin{eqnarray*}
& &   k\int_\T \ind_{\{ \dot\theta \in \T: f_{\theta^\bullet_T}(\dot\theta)\geqslant k\}}(\theta) \mu( \d \theta) \leqslant \int_{\T}  f_{\theta^\bullet_T}(\theta)\mu( \d \theta)\\
%
& \stackrel{(a)}{\Rightarrow} & \lim_{k \rightarrow \infty}  \int_\T \ind_{\{ \dot\theta \in \T: f_{\theta^\bullet_T}(\dot\theta)\geqslant k\}}(\theta) \mu( \d \theta) \leqslant\lim_{k \rightarrow \infty} \frac{1}{k} =0\\
& \stackrel{(b)}{\Rightarrow} &   \int_\T \lim_{k \rightarrow \infty}\ind_{\{ \dot\theta \in \T: f_{\theta^\bullet_T}(\dot\theta)\geqslant k\}}(\theta) \mu( \d \theta)= \lim_{k \rightarrow \infty}  \int_\T \ind_{\{ \dot\theta \in \T: f_{\theta^\bullet_T}(\dot\theta)\geqslant k\}}(\theta) \mu( \d \theta)=0\\
& \stackrel{(c)}{\Rightarrow} &   \lim_{k \rightarrow \infty}f_{\theta^\bullet_T}(\theta)\ind_{\{ \dot\theta \in \T: f_{\theta^\bullet_T}(\dot\theta)\geqslant k\}}(\theta)=0 \text{ $\mu$-a.e.} \\
& \stackrel{(d)}{\Rightarrow} &   \lim_{k \rightarrow \infty} \int_\T f_{\theta^\bullet_T}(\theta)\ind_{\{ \dot\theta \in \T: f_{\theta^\bullet_T}(\dot\theta)\geqslant k\}}(\theta) \mu( \d \theta)=\int_\T \lim_{k \rightarrow \infty}f_{\theta^\bullet_T}(\theta)\ind_{\{ \dot\theta \in \T: f_{\theta^\bullet_T}(\dot\theta)\geqslant k\}}(\theta) \mu( \d \theta)=0  
\end{eqnarray*}
\textit{(a) }By Assumption \ref{assp7},  $ \int_{\T}  f_{\theta^\bullet_T}(\theta)\mu( \d \theta)=\tP(\T)=1$. \textit{(b)} Under Assumption  \ref{assp7}, for all $k \geqslant 1 $, $\ind_{\{\theta \in \T: f_{\theta^\bullet_T}(\theta)\geqslant k \}} \leqslant k\ind_{\{\theta \in \T: f_{\theta^\bullet_T}(\theta)\geqslant k \}} \leqslant f_{\theta^\bullet_T}(\theta) $ where $\int_\T |f_{\theta^\bullet_T}(\theta)| \mu(\d \theta) < \infty $.  Thus apply Lebesgue's dominated convergence theorem. \textit{(c)} First, by  definition of Lebesgue's integral, $\lim_{k \rightarrow \infty}\ind_{\{ \dot\theta \in \T: f_{\theta^\bullet_T}(\dot\theta)\geqslant k\}}(\theta)=0 $  $\mu$-a.e. (e.g.,  in Kallenberg, 1997/2002, p. 13,  Lemma 1.24). Second, Assumption \ref{assp7} implies that $f_{\theta^\bullet_T}(\theta) $ is finite $\mu$-a.e.: if there existed $B \in  \mathcal{E}_{\T}$ s.t. $\mu(B)>0 $ and $f_{\theta^\bullet_T}=\infty$ on $ B$, then, by definition of Lebesgue's integral, $\infty = \int_B f_{\theta^\bullet_T}(\theta)\mu(\d \theta) \leqslant \int_\T f_{\theta^\bullet_T}(\theta)\mu(\d \theta)=\tP(\T)=1 $.  \textit{(d)} 
Under Assumption  \ref{assp7}, for all $k\in \R $, $|f_{\theta^\bullet_T}(\theta)\ind_{\{ \dot\theta \in \T: f_{\theta^\bullet_T}(\dot\theta)\geqslant k\}}(\theta)| \leqslant f_{\theta^\bullet_T}(\theta)$
where $\int_\T |f_{\theta^\bullet_T}(\theta)| \mu(\d \theta) < \infty $. Thus apply Lebesgue's dominated convergence theorem.
 
Therefore, for all $\alpha \in [0,1] $, there exists $\ddot k \in \R_+ \cup \{\infty\} $ s.t. $\tP\left( \left\{ \theta\in \mathbf{\mathbf{\Theta}}: f_{\theta^\bullet_T}(\theta)\geqslant \ddot{k}\right\}\right)= \int_\T f_{\theta^\bullet_T}(\theta)\ind_{\{ \dot\theta \in \T: f_{\theta^\bullet_T}(\dot\theta)\geqslant \ddot k\}}(\theta) \mu( \d \theta)\leqslant 1-\alpha$. 
\end{proof}

\section{Analyses of the approximation error in the Example and  some variants of it}\label{ap8} 
The central object of study in assessing an applied neoclassical method is the distribution of $\widehat{\mathbb{P} \circ {\theta_T^\bullet}^{-1}}$. 
This problem can conceptually be dealt with in the same way as in the classical case. We here include some illustrations for deriving aspects of the law of $\widehat{\mathbb{P} \circ {\theta_T^\bullet}^{-1}}$ with respect to the probability measure of the data generating process. 
Generalizing this investigation parallels the development of classical statistical inference theory. We here mean merely to point out that such an investigation is a matter of mathematical development and sophistication, and is not a conceptual problem for the neoclassical framework.

In subsection \ref{section::exactnormal} we study the distribution of the cumulative distribution function and density of $\hP $ where we restrict attention to the example with i.i.d $\mathcal{N}(\theta_0, s)$ observations. In subsection \ref{appendix::metricbound} we study the concentration of $\hP$ around $\tP$ when $\mathcal{N}(\bar X_T, s)$ is used to approximate the distribution of $\theta_0 = \E (X_1)$ when $X_{1:T}$ are i.i.d. observations not necessarily from a Gaussian distribution. In the Gaussian case, we deduce the exact finite-sample distribution of the distance between $\hP$ and $\tP$ for the Hellinger and Wasserstein distances.

\subsection{The exact distribution of $\hP $} \label{section::exactnormal}
Let $\hat{F}_{\theta^\bullet_T}(x, \omega)$ be the cumulative distribution function induced by $\widehat{\mathbb{P} \circ {\theta_T^\bullet}^{-1}}$. 
Consider the basic example with i.i.d. $\mathcal{N}(\theta_0, s)$ observations, so that 
$
\hat{F}_{\theta^\bullet_T}(x, \omega) = \mathfrak{N}(\frac{x - \bar X_T(\omega)}{s_T(\omega)/\sqrt{T} }; 0; 1).
$
Let us identify the distribution of $\hat{F}_{\theta^\bullet_T}(x)$, first when $x$ is fixed, and, for the simpler case when $s$ is assumed known also deal with $x \mapsto \hat{F}_{\theta^\bullet_T}(x)$ as a stochastic process. 

For a given $x$, we have that the distribution of the random variable $\hat{F}_{\theta^\bullet_T}(x, \omega)$ is known exactly when the observations are i.i.d. $\mathcal{N}(\theta_0, s)$, since clearly
$H_x(y) := \mathbb{P} ( \omega \in \Omega : \hat{F}_{\theta^\bullet_T}(x, \omega) \leqslant y) = \mathbb{P} ( \omega \in \Omega :  \mathfrak{N} ( [x - \bar X_T(\omega)]/[s_T(\omega)/\sqrt{T}] ; 0; 1 )  \leqslant y ) = \mathbb{P} ( [x - \bar X_T]/[s_T/\sqrt{T}] \leqslant \mathfrak{N}^\inv(y  ; 0; 1) )$ where $\mathfrak{N}^\inv(y  ; 0; 1)$ is the inverse function of $\mathfrak{N}(x  ; 0; 1)$, i.e.~the standard Normal quantile function.
It is well known that $W := (T-1)s_T^2/s^2 \stackrel{d}{\sim} \chi_{T-1}^2$ is such that $(\bar X_T,W)$ are independent. Hence $Z := (x - \bar X )/(s/\sqrt{T}) \stackrel{d}{\sim} \mathcal{N}(-x/s,1)$ and $W$ are also independent. We hence see that $H_x$ is the cumulative distribution function of a quotient of two independent random variables with a known distribution, and that $H_x$ is therefore easy to obtain. Clearly, $H_x$ only depends on $T$ and the ratio $x/s$. If $s$ is assumed known, the distribution is known exactly. In the case of $s$ known, we also note that $\hat{F}_{\theta^\bullet_T}(x) = \mathfrak{N}( G(x) ; 0; 1)$ where $G$ is the continuous Gaussian process $G(x) = \sqrt{T} (x - \theta_0)/s + \varepsilon$ where $\varepsilon \stackrel{d}{\sim} \mathcal{N}(0, 1)$ and that $\hat{f}_{\theta^\bullet_T}(x) = (d/dx) \hat{F}_{\theta^\bullet_T}(x)$ equals $\sqrt{T} \mathfrak{n}( G(x) ; 0 ; 1) / s$.
Finally, $\arg \max_x \hat{f}_{\theta^\bullet_T}(x) = \arg \max_x \sqrt{T} \mathfrak{n}( G(x) ; 0 ; 1) / s = \arg \max_x \mathfrak{n}( G(x) ; 0 ; 1)$  is the solution to $G(x) = 0$, i.e. we regain the observation that $\arg \max_x \hat{f}_{\theta^\bullet_T}(x) = \theta_0 - (s/\sqrt{T}) \varepsilon \stackrel{d}{\sim} \mathcal{N}(\theta_0, s/\sqrt{T})$.

\subsection{A probability bound for $\rho (\hP, \tP) $} \label{appendix::metricbound}
 
A feature of $\widehat{\mathbb{P} \circ {\theta_T^\bullet}^{-1}}$ which is of special interest is its concentration around $\mathbb{P} \circ {\theta_T^\bullet}^{-1}$. 
Consider $\varrho = \rho(\widehat{\mathbb{P} \circ {\theta_T^\bullet}^{-1}}, \mathbb{P} \circ {\theta_T^\bullet}^{-1})$ based on some metric $\rho$ on the space of all probability measures on $(\T, \mathcal{E}_\T)$. As $\widehat{\mathbb{P} \circ {\theta_T^\bullet}^{-1}}$ is data-dependent, clearly $\varrho$ is a random mapping. 
Studying the law of $\varrho$ directly is generally complicated. This problem is however, connected to several well-studied problems in classical statistics and probability. We here illustrate some basic issues relating to the study of $\varrho$.

Let $\mathbb{P} \circ {\theta_{T, \infty}^\bullet}^{-1}$ be a distribution that is known to approximate $\mathbb{P} {\circ \theta_\infty^\bullet}^\inv$ on some appropriate scale. As in the proof of Lemma \ref{lem4}, we use the triangle inequality to see that 
\begin{equation} \label{equ::dbound}
\varrho \leqslant \rho(\widehat{\mathbb{P} \circ {\theta_T^\bullet}^{-1}}, \mathbb{P} \circ {\theta_{T, \infty}^\bullet}^{-1}) + \rho( \mathbb{P} \circ {\theta_{T, \infty}^\bullet}^{-1}, \mathbb{P} \circ {\theta_T^\bullet}^{-1}).
\end{equation}
If $\mathbb{P} \circ {\theta_{T, \infty}^\bullet}^{-1}$ is well-chosen, this bound can be used to derive bounds for the exceedance probabilities of $\varrho$. We note that this triangle inequality bound is general, but is likely to be crude compared to other bounds where more of the structure of the problem is used. 

In the case when $\sqrt{T} (\theta_T^\bullet - \theta_0) \xrightarrow[T \rightarrow \infty]{\mathcal{L}} \mathcal{N}(0, \Sigma^{\frac{1}{2}})$ for some covariance matrix $\Sigma$, the natural choice of $\mathbb{P} \circ {\theta_{T, \infty}^\bullet}^{-1}$ is the distribution $\mathcal{N}(\theta_0, \Sigma^{\frac{1}{2}}/\sqrt{T})$. 
In this case, we typically have that $\widehat{\mathbb{P} \circ {\theta_T^\bullet}^{-1}}$ has the data-dependent distribution $\mathcal{N}(\theta_0^*(\omega),  \hat \Sigma^{\frac{1}{2}}(\omega)/\sqrt{T})$, and we see that the first term in the above display quantifies the loss in precision in not knowing the population parameters that are in the Normal approximation of the law of $\theta_T^\bullet$. 

The first term in the bound of eq.~\eqref{equ::dbound} compares the distance between two exactly Normal distributions, one of which is data-dependent. This is the only stochastic term in eq.~\eqref{equ::dbound} and bounds, or even exact expressions for comparing the law of two Normals are available for several probability metrics as we will illustrate shortly.

The second term in the bound of eq.~\eqref{equ::dbound} quantifies how good this Normal approximation is to the distribution of $\theta_T^\bullet$ if these parameters are known, and is a well-studied problem, especially for the Kolmogorov metric where the celebrated Berry-Esseen Theorem applies.

\subsubsection{The i.i.d. Gaussian case}
In our example with i.i.d. $\mathcal{N}(\theta_0, s)$ observations, both the distribution of $\bar X_T^\bullet$ and the approximation $\mathcal{N}(\bar X_T(\omega), \frac{s_T(X_{1:T}(\omega))}{\sqrt{T}})$ are Gaussian and we can work directly with $\varrho = \rho(\mathcal{N}(\bar X_T, s_T/\sqrt{T}), \mathcal{N}(\theta_0, s/\sqrt{T}))$.

When $\rho$ is the Hellinger or total variation distance, we will see that $\varrho$ does not converge to zero, but does converge to zero with the Wasserstein distance. The Wasserstein distance is a finer metric than the Prohorov and the L\'evy metric. If at least one of the probability distributions that are compared have a density with respect to Lebesgue measure with finite supremum -- which is the case when comparing two Gaussians -- the L\'evy metric is in turn equivalent to the Kolmogorov metric (Gibbs and Su, 2002). On the ladder of probability metrics, the fact that $\varrho$ is $o_P(1)$ when $\rho$ is the Wasserstein metric but not when $\rho$ is the Hellinger metric gives information on how fine-grained the neoclassical approximation is in an elementary case.

Let $\rho$ be the Hellinger distance, i.e., the metric on the space of probability measures with densities with respect to Lebesgue measure given by $\rho_H(\nu_1,\nu_2) = \left[ \int_{\mathbb{R}} (\sqrt{f_1} - \sqrt{f_2})^2 \, \d \lambda \right]^{1/2}$ where $f_1, f_2$ are the densities with respect to Lebesgue measure of $\nu_1, \nu_2$ respectively (Gibbs and Su, 2002).
Letting $f_1, f_2$ be the densities of $\mathcal{N}(\bar X_T, s_T/\sqrt{T})$ and $\mathcal{N}(\theta_0, s/\sqrt{T})$ respectively, we see that
$1 - \varrho^2/2 =  \int_\mathbb{R} \sqrt{f_1 f_2} \, \d \lambda  = \sqrt{ \frac{2 (s_T/\sqrt{T}) (s/\sqrt{T})}{s_T^2/T + s^2/T}    } \exp \left\{ -\frac{1}{4} \frac{ (\bar X_T - \theta_0)^2}{s_T^2/T + s^2/T} \right\} 
= \sqrt{ \frac{2 s_T s }{s_T^2 + s^2}    } \exp \left\{ -\frac{T }{4 } \frac{ (\bar X_T - \theta_0)^2}{s_T^2 + s^2} \right\}$, which shows that $\varrho = \sqrt{2} \left( 1 - \sqrt{\frac{2 s_T s }{s_T^2 + s^2}} \exp \left\{ -\frac{s^2 }{4  } \frac{ Z_T^2}{s_T^2 + s^2} \right\}  \right)^{1/2}$ where $Z_T := \sqrt{T} (\bar X_T - \theta_0)/s \stackrel{d}{\sim} \mathcal{N}(0,1)$.
If $s$ is assumed known, we can replace $s_T$ with $s$ in the above expression and get $\varrho = \sqrt{2} \sqrt{ 1 -  \e^{ - \chi_1^2/8 } }$, which has a distribution that does not change with $T$. In both cases, $\varrho$ does not converge to zero, but is a non-degenerate random variable. The same conclusion holds for the total variation metric, since it is equivalent to the Hellinger metric (Gibbs and Su, 2002).

Let $\rho$ be the Wasserstein distance, i.e., the metric on the space of probability measures given by $\rho_W(\nu_1,\nu_2) = \int_{\mathbb{R}} |F_1(x) - F_2(x)| \d x$ where $F_1, F_2$ are the cumulative distribution functions $\nu_1, \nu_2$ respectively (Gibbs and Su, 2002). 
Letting $F_1, F_2$ be the cumulative distribution functions of $\mathcal{N}(\bar X_T, s_T/\sqrt{T})$ and $\mathcal{N}(\theta_0, s/\sqrt{T})$ respectively, (Givens and Shortt, 1984, equation (4)) shows 
\begin{equation}
\varrho = \rho_W(F_1, F_2) = \sqrt{ |\bar X_T - \theta_0|^2 + s_T^2/T + s^2/T - 2 s s_T/T)}
\end{equation}
which clearly goes to zero in probability. When $s$ is assumed known, we see that $\varrho = |\bar X_T - \theta_0| = s |\mathcal{N}(0,1)|/\sqrt{T}$, which follows a folded Normal distribution, so that $\E \varrho = T^{-1/2} \sqrt{2/\pi}$. In both cases, $\varrho = O_P(T^{-1/2})$.

\subsubsection{The general i.i.d. case}
Let us now work with the case with i.i.d. observations $X_{1:T}$ from a distribution with finite third order moment $\zeta$, variance $s^2$ and expectation $\theta_0$, but not necessarily being absolutely continuous with respect to Lebesgue measure. We use $\bar X_T$ as a proxy for $\theta_0$ and, motivated by the Central Limit Theorem, approximate $\mathbb{P} \circ {\theta_{T}^\bullet}^{-1}$ with $\mathcal{N}(\bar X_T(\omega), s_T(\omega)/\sqrt{T})$. Because $\mathbb{P} \circ {\theta_{T}^\bullet}^{-1}$ is not Normal, we resort to the bound in eq.~\eqref{equ::dbound}.
For the second term in the bound, we will use the Berry-Esseen bound, which is given in terms of the Kolmogorov distance $\rho_K(\nu_1, \nu_2) = \sup_{x} |F_1(x) - F_2(x)|$ where $F_1, F_2$ being the c.d.f. induced by $\nu_1, \nu_2$ respectively.

From eq.~\eqref{equ::dbound}, we see 
$\rho_K(\mathcal{N}(\bar X_T, s_T/\sqrt{T}),  \mathbb{P} \circ {\theta_{T}^\bullet}^{-1})  \leqslant \varrho_1 + \varrho_2$ where $\varrho_1 := \rho_K(\mathcal{N}(\bar X_T, s_T/\sqrt{T}), \mathcal{N}(\theta_0, s/\sqrt{T}))$ and $\varrho_2 := \rho_K(\mathcal{N}(\theta_0, s/\sqrt{T}), \mathbb{P} \circ {\theta_{T}^\bullet}^{-1})$.
The first term compares two Gaussians. Calculations show that
$\varrho_1 = |\mathfrak{N}(Q;0;1) - \mathfrak{N}(s_t Q/s + Z_T;0;1)$ for $Z_T = \sqrt{T}(\bar X_T - \theta_0)/s $
where $Z_T = \sqrt{T} (\bar X_T - \theta_0)/s \stackrel{d}{\sim} \mathcal{N}(0,1)$ and $Q = (s_T Z/s - \sqrt{ Z_T^2 + 2(s^2/s_T^2 - 1) \log (s/s_T) })/(1-s^2/s_T^2)$. We hence have an exact expression for $\varrho_1$, and we see it is $o_P(1)$. 
We have
$
\rho_K(\mathcal{N}(\theta_0, s/\sqrt{T}), \mathbb{P} \circ {\theta_{T}^\bullet}^{-1}) = \sup_{x \in \mathbb{R}} | \mathfrak{N}(\sqrt{T} (x - \theta_0)/s ; 0; 1) - \P( \bar X_n \leqslant x) | 
= \sup_{x \in \mathbb{R}} | \mathfrak{N}(\sqrt{T} (x - \theta_0)/s ; 0; 1) - \P( Z_n \leqslant \sqrt{T} (x - \theta_0)/s)) |  \
= \sup_{z\in \mathbb{R}} | \mathfrak{N}(\sqrt{T} z ; 0; 1)- \P( Z_T \leqslant \sqrt{T} z) | = \sup_{z\in \mathbb{R}} | \mathfrak{N}(z ; 0; 1)- \P( Z_T \leqslant z) | 
$
which is bounded by $C \zeta s^{-3}/\sqrt{T}$ by the Berry-Esseen bound for a $(2 \pi)^{-1/2} < C < 0.4748$ (Shevtsova, 2011).

\section{Consistency of approximations} \label{ap7}

In this appendix, we investigate the consistency of the approximations presented in subsection \ref{sec5_2}. As in the main text of this paper, we use the Prokhorov metric, and  the  Lemma \ref{lem4} (p. \pageref{lem4}). The following Lemma \ref{lem5}
ensures that the condition (b) of Lemma \ref{lem4}  is satisfied,  so that it only remains to check the condition (a) of Lemma \ref{lem4}
for each approximation.  
\begin{lem}\label{lem5} Let $\rho_{P}(.,.)$ be the Prokhorov metric. Under Assumptions \ref{assp1} and \ref{assp4}, if $\theta^\bullet_{T} \stackrel{\P}{\rightarrow} \theta_0 $ as $T \rightarrow \infty $, then $\rho_P(\tP, \delta_{\theta_0}) \stackrel{}{\rightarrow} 0 $ as $T \rightarrow \infty $.
\end{lem}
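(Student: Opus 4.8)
The plan is to reduce the claim to two standard facts: that convergence in probability entails weak convergence, and that the Prokhorov metric metrizes weak convergence. By Assumption \ref{assp4}(b), $\T$ is a Borel subset of $\R^p$, hence a separable metric space under the Euclidean distance. On the space of probability measures on such a space, the Prokhorov metric $\rho_P$ generates the topology of weak convergence (e.g., Billingsley, 1968/1999, pp. 72--73), exactly as used in the Example. Consequently, it suffices to establish that the distribution of the generic proxy converges weakly to the Dirac mass at $\theta_0$, i.e., $\tP \Twconv \delta_{\theta_0}$, since this is equivalent to $\rho_P(\tP, \delta_{\theta_0}) \rightarrow 0$.

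To obtain this weak convergence from the hypothesis $\theta^\bullet_T \stackrel{\P}{\rightarrow} \theta_0$, I would argue directly with bounded continuous test functions. Let $g: \T \rightarrow \R$ be bounded and continuous. The continuous mapping theorem (e.g., Kallenberg, 1997/2002, Lemma 4.3) gives $g(\theta^\bullet_T) \stackrel{\P}{\rightarrow} g(\theta_0)$; since $g$ is bounded, the bounded convergence theorem for convergence in probability then upgrades this to convergence of expectations, $\E g(\theta^\bullet_T) \rightarrow g(\theta_0) = \int_\T g \, \d \delta_{\theta_0}$. As $g$ was an arbitrary bounded continuous function, this is precisely the statement $\tP \Twconv \delta_{\theta_0}$ (equivalently, one may verify weak convergence through the Portmanteau theorem by checking pointwise convergence of the c.d.f.\ of $\tP$ to that of $\delta_{\theta_0}$ at every continuity point $\theta \in \T \setminus \{\theta_0\}$, in the manner of the CDF computations in the Example). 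Combining this with the metrization recorded above yields $\rho_P(\tP, \delta_{\theta_0}) \rightarrow 0$ as $T \rightarrow \infty$, as desired.

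I do not expect a substantive obstacle here; the result is essentially the classical equivalence between convergence in probability to a constant and weak convergence to the corresponding Dirac mass, dressed in the notation of generic proxies. The only points requiring a little care are bookkeeping rather than conceptual: one must ensure that the metric underlying the convergence in probability of $\theta^\bullet_T$ (the Euclidean distance on $\T \subset \R^p$) is the same one that induces the topology the Prokhorov metric metrizes, which holds immediately under Assumption \ref{assp4}(b), and that $\theta^\bullet_T$ is a genuine measurable random element of $(\T, \mathcal{E}_\T)$, which is guaranteed by Lemma \ref{lem1}.
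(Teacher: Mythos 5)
Your argument is correct and follows essentially the same route as the paper's proof, which simply cites that convergence in probability implies convergence in law (Kallenberg, 1997/2002, Lemma 4.7) and that the Prokhorov metric metrizes weak convergence; you merely unpack the first citation via the continuous mapping theorem and bounded convergence. No gap.
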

\begin{proof} Convergence in probability implies convergence in law (e.g., Kallenberg, 1997/2002, Lemma 4.7), which, in turn, is equivalent to the convergence w.r.t. $\rho_P $.
\end{proof}

\subsection{Consistency of calibration} \label{ap7_1}
In this subsection, we study the asymptotic properties of calibration from a neoclassical point of view under the assumption that calibration yields consistent parameter values.  This assumption should  hold if the calibrated parameter value  corresponds to estimates from existing
empirical studies, or to the minimizer
of some goodness-of-fit measure (e.g., Gouri\'eroux and Monfort, 1996, sec. 2.1.2.).  For  criterium-adjusted calibration, we require the following properties from the criterium function.   

\begin{assp}\label{assp12} \emph{\textbf{ (a)}} The criterion function  $u : \R^p  \times \R^p  \rightarrow \R_+ $ equals zero outside $\T^2 $, and $\theta \mapsto u(\theta, \dot \theta) $ is Borel measurable for all $\dot \theta \in \T $. \emph{\textbf{ (b)}} For all $\dot \theta \in \T $, $\theta \mapsto u(\dot \theta,\theta )$ is continuous in a neighborhood of $\theta_0 $. \emph{\textbf{ (c)}} There exists $r_{1}>0 $ s.t.  $\int_\T \sup_{\dot\theta \in B_{r_1}(\theta_0)} u( \theta, \dot\theta ) \lambda( \d \theta) < \infty$, where $B_{r_1}(\theta_0)$ denotes a ball in $\T$ centered at $\theta_0$ with radius $r_{1}$. \emph{\textbf{ (d)}}  There exists $r_{2}>0 $ s.t., for all $\dot \theta \in B_{r_2}(\theta_0) $, $0<\int_\T u( \theta, \dot\theta ) \lambda( \d \theta) $.
\end{assp}
Assumption \ref{assp12}(a) requires the criterion function to be positive and to take zero values outside $\T^2$.   As explained in Remark \ref{rk3} on p. \pageref{rk3},  Assumption \ref{assp12}(a) (combined with Assumption \ref{assp12}(d)) allows us to transform criterion functions into a p.d.f., so that we remain within the elementary framework of section \ref{sec4_2}.  Assumption \ref{assp12}(b) requires continuity of criterion function, which is a standard and relatively mild requirement: continuous functions are dense in the set of Borel measurable functions (Kallenberg, 1997/2002, p. 19, Lemma 1.37).
Assumption \ref{assp12}(c) allows us to use Lebesgue dominated convergence theorem. Compactness of $\T$, and continuity of the criterion function over $\T^2 $ would ensure Assumption \ref{assp12}(c). Assumption \ref{assp12}(d), which seems mild, and allows us to normalize the criterion function, so that it integrates to one. 

\begin{prop}\label{prop3} Denote the Prokhorov metric with $\rho_P $. Under Assumptions \ref{assp1}, and \ref{assp4},  if $\theta^*_{T,C} \stackrel{\P}{\rightarrow} \theta_0 $ as $T \rightarrow \infty $, then
\begin{enumerate}
\item[i)]  $\rho_{P}(\delta_{\theta^*_{T,C}}, \delta_{\theta_0} ) \stackrel{\P}{\rightarrow} 0 $, as $T \rightarrow \infty $, which, in turn, implies consistency of plain calibration by Lemmas \ref{lem4} and \ref{lem5};
\item[ii)] under the additional Assumption  \ref{assp12},  $\rho_P\left( \frac{\int_. u(\theta,\theta^*_{T,C} )\lambda( \d \theta )}{ \int_\T u(\dot \theta, \theta^*_{T,C})\lambda(\d \dot \theta) }, \frac{\int_. u(\theta,\theta_0 )\lambda( \d \theta )}{ \int_\T u(\dot \theta, \theta_0)\lambda(\d \dot \theta) } \right) \stackrel{\P}{\rightarrow} 0 $, as $T \rightarrow \infty $, which, in turn, implies consistency of weighted calibration by Lemmas \ref{lem4} and \ref{lem5}. 
\end{enumerate}
\end{prop}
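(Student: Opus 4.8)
The proposition has two parts, and in each the substantive content is the stated convergence in the Prokhorov metric $\rho_P$; once these are secured, consistency (Assumption \ref{assp11}) follows by combining condition (a) of Lemma \ref{lem4} with condition (b) supplied by Lemma \ref{lem5}, exactly as announced in the preamble to this appendix. So the plan is to concentrate on proving the two $\rho_P$-convergences and then to cite Lemmas \ref{lem4} and \ref{lem5} for the consistency conclusions.

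Part (i) is essentially immediate. For two Dirac masses on the Euclidean space $\T$ one has $\rho_P(\delta_a,\delta_b)\leqslant \min(1,|a-b|)\leqslant |a-b|$; to see the first bound, note that for any $\epsilon>|a-b|$ and any Borel $A$, membership $a\in A$ forces $b$ into the $\epsilon$-neighborhood $A^{\epsilon}$, whence $\delta_a(A)\leqslant \delta_b(A^{\epsilon})+\epsilon$. Therefore $\rho_P(\delta_{\theta^*_{T,C}},\delta_{\theta_0})\leqslant |\theta^*_{T,C}-\theta_0|\stackrel{\P}{\rightarrow}0$ by hypothesis. Equivalently, $x\mapsto \delta_x$ is $1$-Lipschitz from $(\T,|\cdot|)$ into the Prokhorov space, so the continuous mapping theorem applied to $\theta^*_{T,C}\stackrel{\P}{\rightarrow}\theta_0$ gives the claim.

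For part (ii), write $\mu_{\theta'}$ for the normalized-criterion probability measure with $\lambda$-density $f_{\theta'}(\theta):=u(\theta,\theta')/\int_\T u(\dot\theta,\theta')\lambda(\d\dot\theta)$, well defined and measurable by Assumption \ref{assp12}(a),(d), so that the two measures in the statement are $\mu_{\theta^*_{T,C}}$ and $\mu_{\theta_0}$. The plan is to reduce the convergence in probability to a deterministic statement via the subsequence characterization: $\rho_P(\mu_{\theta^*_{T,C}},\mu_{\theta_0})\stackrel{\P}{\rightarrow}0$ holds if, and only if, every subsequence admits a further subsequence along which it converges to $0$ $\P$-a.s. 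Given any subsequence, since $\theta^*_{T,C}\stackrel{\P}{\rightarrow}\theta_0$ I would extract a further subsequence on which $\theta^*_{T,C}(\omega)\to\theta_0$ for $\P$-a.e. $\omega$, reducing matters to the purely analytic claim: if a deterministic sequence $\theta_n\to\theta_0$ in $\T$, then $\rho_P(\mu_{\theta_n},\mu_{\theta_0})\to 0$. This analytic claim I would settle by Scheff\'e's lemma. For fixed $\theta$, Assumption \ref{assp12}(b) gives $u(\theta,\theta_n)\to u(\theta,\theta_0)$, while Assumption \ref{assp12}(c) furnishes the integrable majorant $G(\theta):=\sup_{\dot\theta\in B_{r_1}(\theta_0)}u(\theta,\dot\theta)$ dominating $u(\cdot,\theta_n)$ once $\theta_n\in B_{r_1}(\theta_0)$; dominated convergence then yields $\int_\T u(\dot\theta,\theta_n)\lambda(\d\dot\theta)\to\int_\T u(\dot\theta,\theta_0)\lambda(\d\dot\theta)$, a limit strictly positive by Assumption \ref{assp12}(d) at $\theta_0$. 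Hence $f_{\theta_n}\to f_{\theta_0}$ pointwise $\lambda$-a.e.; since each is a probability density with respect to $\lambda$, Scheff\'e's lemma gives $\int_\T |f_{\theta_n}-f_{\theta_0}|\,\lambda(\d\theta)\to 0$, i.e. total-variation convergence, which implies weak convergence, which $\rho_P$ metrizes on $\R^p$. Feeding this back through the subsequence principle proves (ii).

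The routine parts (the Dirac estimate, the dominated-convergence bookkeeping, and the invocation of Scheff\'e) are straightforward. The main obstacle is the passage from the deterministic convergence to convergence in probability: one must confirm that $\rho_P(\mu_{\theta^*_{T,C}},\mu_{\theta_0})$ is a genuine random variable, apply the subsequence principle correctly, and verify that along the a.s.-convergent subsequence $\theta^*_{T,C}(\omega)$ eventually enters both $B_{r_1}(\theta_0)$ and $B_{r_2}(\theta_0)$ so that Assumptions \ref{assp12}(c)--(d) are in force. This is handled cleanly by the subsequence method, after which consistency of both plain and criterion-adjusted calibration follows from Lemmas \ref{lem4} and \ref{lem5}.
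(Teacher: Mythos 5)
Your proposal is correct, and both parts take a route that is genuinely different from the paper's. For (i) the paper verifies weak convergence through the Portmanteau theorem, checking pointwise convergence of the c.d.f.\ $\ind_{[\theta^*_{T,C},\infty[}(\theta)$ to $\ind_{[\theta_0,\infty[}(\theta)$ at continuity points of the limit; your direct estimate $\rho_P(\delta_a,\delta_b)\leqslant\min(1,|a-b|)$ reaches the same conclusion in one line and avoids any discussion of continuity points. For (ii) the paper applies the dominated convergence theorem under Assumption \ref{assp12}(a)--(c) to get $\int_B u(\theta,\theta^*_{T,C})\lambda(\d\theta)\stackrel{\P}{\rightarrow}\int_B u(\theta,\theta_0)\lambda(\d\theta)$ for every $B\in\mathcal{E}_\T$, normalizes by Assumption \ref{assp12}(d), and again invokes Portmanteau; you instead pass through the subsequence characterization of convergence in probability to reduce to a deterministic claim, and then use pointwise convergence of the normalized densities together with Scheff\'e's lemma to obtain total-variation convergence, which dominates Prokhorov convergence. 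Your version buys two things: it makes explicit the subsequence step that legitimizes feeding an in-probability hypothesis into a deterministic convergence theorem (the paper applies DCT directly to a $\stackrel{\P}{\rightarrow}$ statement, which strictly speaking requires exactly the reduction you spell out), and Scheff\'e delivers the stronger total-variation conclusion, sidestepping the question of whether setwise convergence over all Borel sets is the right input for Portmanteau. The paper's version is shorter and stays uniformly within the Portmanteau framework used elsewhere in Appendix F. The consistency conclusions via Lemmas \ref{lem4} and \ref{lem5} are identical in both treatments.
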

\begin{proof} 
\textit{i)} By Portmanteau theorem, it is sufficient to check  the point-wise convergence of cumulative distribution functions (c.d.f.) at the continuity points of the limiting c.d.f.:
for all $\theta \in \T  $ that are continuity points of $\ind_{[\theta_0, \infty[}(.) $, $\lim_{T \rightarrow \infty}\ind_{[\theta^*_{T,C}, \infty[}(\theta) \stackrel{\P}{\rightarrow} \ind_{[\theta_0, \infty[}(\theta) $, as $T \rightarrow \infty $ , by assumption.  

\textit{ii)} Under Assumption \ref{assp12} (a)-(c), by the Lebesgue dominated convergence theorem,  for all $B \in  \mathcal{E}_{\T}  $, as $T \rightarrow \infty $,
\begin{eqnarray*}
\int_B u(\theta,\theta^*_{T,C} )\lambda( \d \theta ) \stackrel{\P}{\rightarrow} \int_B u(\theta,\theta_0 )\lambda( \d \theta ).
\end{eqnarray*}
Therefore, by Assumption \ref{assp12} (d),  for all $B \in  \mathcal{E}_{\T}  $, as $T \rightarrow \infty $,
\begin{eqnarray*}
\frac{\int_B u(\theta,\theta^*_{T,C} )\lambda( \d \theta )}{ \int_\T u(\dot \theta, \theta^*_{T,C})\lambda(\d \dot \theta) } \stackrel{\P}{\rightarrow} \frac{\int_B u(\theta,\theta_0 )\lambda( \d \theta )}{ \int_\T u(\dot \theta, \theta_0)\lambda(\d \dot \theta) }.
\end{eqnarray*}
where the LHS and RHS are probability measures  on $(\T,  \mathcal{E}_{\T}) $ by Assumption  \ref{assp12} (a) and standard properties of the Lebesgue integral. Then the result follows by the Portmanteau theorem.    
\end{proof}

\subsection{Consistency of Gaussian and Laplace approximations} \label{ap7_2} In this subsection, we  show that asymptotic normality of $ \hP $ implies consistency of $ \hP $ under mild assumptions, i.e., asymptotic normality of $ \hP $ implies condition  (a) of Lemma \ref{lem4}.  Then, from this general result, we deduce consistency of the  Gaussian and Laplace approximations.
 
\subsubsection{Consistency from asymptotic normality} \label{ap7_2_1}
 It is  is well-known that asymptotic normality implies consistency for random variables. To the best of our knowledge,  there is no result available for random probability measures. We require the following two mild Assumptions   \ref{assp16}  and \ref{assp13} to prove this result for  $ \hP $. Assumption \ref{assp16} ensures that we can study the convergence of $ \hP $ on the space of probability measures.

 \begin{assp} \label{assp16} The approximation of the distribution of the generic proxy is  a random probability measure   from $\underline{\Sbf}_\infty$ to $\T$ for $T$ big enough $\P$-a.s., i.e., for $T$ big enough, (a) [measurability condition] for all $B \in \mathcal{E}_\T$, $x_{1:\infty} \mapsto \hP(B)$ is $\underline{\mathcal{S}}_\infty/\mathcal{B} $-measurable; and (b) [probability condition]  $\hP $ is a probability  measure $\P$-a.s.

 \end{assp}
 
Assumption \ref{assp16} requires that we can approximate the distribution of $\theta^\bullet_T $  $\P$-a.s.  in a measurable way. The following  Assumption \ref{assp13}
allows us to consider the Cholesky decomposition of the proxy of the asymptotic variance-covariance matrix, and its inverse.
\begin{assp} \label{assp13}  $\tilde{\Sigma}_T\stackrel{\P}{\rightarrow}\tilde{\Sigma} $, where  $\tilde{\Sigma} $ is a positive-definite  matrix, and   $(\tilde{\Sigma}_T)_{T \in \[1,\infty \[} $   a sequence of $\sigma(X_{1:\infty})/\mathcal{B}(\R^{p^2}) $-measurable matrices that are symmetric w.p.a.1, as $T  \rightarrow \infty $. 
\end{assp}

\begin{rk} $\tilde{\Sigma} $ does not need to be $\Sigma$, which is the asymptotic variance of $\sqrt{T}(\theta^\bullet_T- \theta_0) $: see Assumption \ref{assp9} on p. \pageref{assp9}. \hfill $\diamond $

\end{rk}

The following Lemma is the main result of this subsection. 

\begin{lem}\label{lem8} Denote the c.d.f.
 of $\hP $ with $\hF $, i.e., for all $\theta \in \T $, $\hF(\theta)=\hP(]-\infty, \theta]) $.  Under Assumptions \ref{assp1},  \ref{assp4}, \ref{assp16}, and \ref{assp13} if, as $T \rightarrow \infty $,
\begin{itemize}
\item[(a)]  $\tilde{\theta}_{T} \stackrel{\P}{\rightarrow} \theta_0 $, where $\tilde{\theta}_{T} $ is $\sigma(X_{1:\infty})/\mathcal{E}_\T $-measurable, 
\item[(b)] 
 for all $b \in \R^p $, $\hat{F}_{\theta^\bullet_T} \left(\tilde{\theta}_{T}+\tilde{\Sigma}_T^{\frac{1}{2}}\frac{b}{\sqrt{T}}\right) \stackrel{\P}{\rightarrow} \mathfrak{N}(b;0;\grave{\Sigma}^{\frac{1}{2}}) $,
\end{itemize}
  then, $ $ as $T \rightarrow \infty $, 
$$
\rho_P\left( \hF , \delta_{\theta_0} \right) \stackrel{\P}{\rightarrow} 0, \footnote{ In this statement, we identify the c.d.f. with  its corresponding probability measure. Hereafter, we follow the same abuse of notation.}$$  
which, in turn, implies consistency of $\hP $ by Lemmas \ref{lem4} and \ref{lem5}.
\end{lem}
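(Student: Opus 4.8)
The plan is to prove the displayed convergence $\rho_P(\hF,\delta_{\theta_0})\stackrel{\P}{\rightarrow}0$ by showing directly that the random measure $\hP$ concentrates its mass near $\theta_0$, i.e.\ that $\hP(B_\epsilon(\theta_0))\stackrel{\P}{\rightarrow}1$ for every $\epsilon>0$. Since the Prokhorov metric metrizes weak convergence and $\delta_{\theta_0}$ is a point mass, this is equivalent to $\rho_P(\hP,\delta_{\theta_0})\stackrel{\P}{\rightarrow}0$; here Assumption \ref{assp16}(b) guarantees that $\hP$ is a genuine probability measure (so the box/inclusion--exclusion manipulations below are legitimate), and Assumption \ref{assp16}(a) that $\rho_P(\hF,\delta_{\theta_0})$ is a well-defined random variable. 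I identify $\hF$ with $\hP$ throughout, as the footnote permits.

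The device is to read condition (b) not as pointwise information on $\hF$ at a fixed argument, but as information on the $\hP$-mass of boxes aligned with the grid $\{\tilde\theta_T+\tilde\Sigma_T^{1/2}b/\sqrt{T}:b\in\R^p\}$. Fix $M>0$, set $\mathbf 1:=(1,\dots,1)'\in\R^p$, and let $B_T^{(M)}$ be the half-open box with lower corner $\tilde\theta_T-\tilde\Sigma_T^{1/2}M\mathbf 1/\sqrt{T}$ and upper corner $\tilde\theta_T+\tilde\Sigma_T^{1/2}M\mathbf 1/\sqrt{T}$. By the inclusion--exclusion (rectangle) formula, $\hP(B_T^{(M)})$ is an alternating sum of the $2^p$ values $\hF(\tilde\theta_T+\tilde\Sigma_T^{1/2}b_S/\sqrt{T})$ over corners $b_S\in\{-M,M\}^p$, each of which is \emph{fixed}. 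Condition (b) applies to every such corner, giving $\hF(\tilde\theta_T+\tilde\Sigma_T^{1/2}b_S/\sqrt{T})\stackrel{\P}{\rightarrow}\mathfrak N(b_S;0;\grave\Sigma^{1/2})$; letting $\zeta$ denote the $\R^p$-valued Gaussian vector with c.d.f.\ $\mathfrak N(\cdot\,;0;\grave\Sigma^{1/2})$, the same alternating sum yields $\hP(B_T^{(M)})\stackrel{\P}{\rightarrow}\P\bigl(\zeta\in(-M\mathbf 1,M\mathbf 1]\bigr)$, which increases to $1$ as $M\uparrow\infty$.

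First I would convert this into control of a fixed ball. Because $\tilde\Sigma_T\stackrel{\P}{\rightarrow}\tilde\Sigma$ with $\tilde\Sigma$ positive definite (Assumption \ref{assp13}), the continuous mapping theorem gives $\tilde\Sigma_T^{1/2}\stackrel{\P}{\rightarrow}\tilde\Sigma^{1/2}$, so the operator norm of $\tilde\Sigma_T^{1/2}$ is bounded in probability and every corner of $B_T^{(M)}$ lies within distance $\|\tilde\Sigma_T^{1/2}\|_{\mathrm{op}}M\sqrt{p}/\sqrt{T}=o_\P(1)$ of $\tilde\theta_T$; combined with $\tilde\theta_T\stackrel{\P}{\rightarrow}\theta_0$ from (a), this forces $B_T^{(M)}\subset B_\epsilon(\theta_0)$ with probability tending to one, for any fixed $\epsilon>0$. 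Given $\eta>0$, choose $M$ so large that $\P(\zeta\in(-M\mathbf 1,M\mathbf 1])>1-\eta$; then $\hP(B_\epsilon(\theta_0))\geqslant\hP(B_T^{(M)})>1-2\eta$ with probability tending to one. As $\eta$ is arbitrary, $\hP(B_\epsilon(\theta_0))\stackrel{\P}{\rightarrow}1$ for every $\epsilon$, which is exactly the asserted weak convergence in probability, $\rho_P(\hF,\delta_{\theta_0})\stackrel{\P}{\rightarrow}0$. The closing clause follows by reading this as condition (a) of Lemma \ref{lem4} with $\P\circ{\theta^\bullet_\infty}^{-1}=\delta_{\theta_0}$, condition (b) being supplied by Lemma \ref{lem5}.

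The step I expect to be the main obstacle is precisely the passage from the ``$1/\sqrt{T}$-local'' information in condition (b) to a statement about a fixed-radius ball: the rescaling matrix $\tilde\Sigma_T^{1/2}$ is not diagonal, so it does not respect the coordinatewise order underlying c.d.f.'s, and the naive substitution $\theta=\tilde\theta_T+\tilde\Sigma_T^{1/2}b_T/\sqrt{T}$ forces $b_T\to\infty$, where (b) — valid only for fixed $b$ — gives nothing. Aligning $B_T^{(M)}$ with the $\tilde\Sigma_T^{1/2}$-grid is what circumvents this, keeping each $b_S$ fixed while the box still shrinks to $\theta_0$. A secondary, routine technicality is that every convergence above is in probability over the data; this is handled by the subsequence principle, passing to a further subsequence along which the finitely many corner convergences in (b) hold almost surely, establishing $\rho_P(\hP,\delta_{\theta_0})\to0$ pathwise there and hence in probability for the full sequence.
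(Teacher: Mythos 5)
Your overall strategy --- showing directly that $\hP$ puts mass tending to one (in probability) on every fixed ball around $\theta_0$ and reading this off as $\rho_P(\hF,\delta_{\theta_0})\stackrel{\P}{\rightarrow}0$ --- is genuinely different from the paper's, which lifts $\hP$ to an actual random vector $Z_T$ on an enlarged space via the randomization Lemma \ref{lem7}, standardizes to $Y_T=\sqrt{T}\tilde{\Sigma}_T^{-1/2}(Z_T-\tilde{\theta}_{T})$, deduces $Y_T\Rightarrow\mathcal{N}(0,\grave{\Sigma})$ under the lifted measure from condition (b) plus uniform integrability, and then obtains $Z_T\stackrel{}{\rightarrow}\theta_0$ from tightness. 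Your route could in principle work, but the execution has a genuine gap for $p\geqslant 2$, located exactly at the step you flag as the main obstacle and claim to have circumvented. The rectangle (inclusion--exclusion) formula expresses $\hP((l,u])$ as an alternating sum of $\hF$ over the $2^p$ \emph{coordinatewise} corners $c_S\in\prod_{i=1}^p\{l_i,u_i\}$. With $l=\tilde{\theta}_{T}-\tilde{\Sigma}_T^{1/2}M\mathbf{1}/\sqrt{T}$ and $u=\tilde{\theta}_{T}+\tilde{\Sigma}_T^{1/2}M\mathbf{1}/\sqrt{T}$, the mixed corners are \emph{not} of the form $\tilde{\theta}_{T}+\tilde{\Sigma}_T^{1/2}b_S/\sqrt{T}$ with $b_S\in\{-M,M\}^p$ unless $\tilde{\Sigma}_T^{1/2}$ is diagonal: writing $A=\tilde{\Sigma}_T^{1/2}$, each sign pattern $\sigma\in\{-1,1\}^p$ would require some $\epsilon\in\{-1,1\}^p$ with $[A\epsilon]_i=\sigma_i[A\mathbf{1}]_i$ for all $i$, which generically has no solution (take $p=2$, $A=\bigl(\begin{smallmatrix}1&1/2\\1/2&1\end{smallmatrix}\bigr)$, $\sigma=(1,-1)$). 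Conversely, the image of the cube $(-M\mathbf{1},M\mathbf{1}]$ under $b\mapsto\tilde{\theta}_{T}+\tilde{\Sigma}_T^{1/2}b/\sqrt{T}$ is a parallelepiped, and the alternating sum of $\hF$ over its vertices is not the $\hP$-measure of any set. So the identity ``$\hP(B_T^{(M)})=$ alternating sum of the grid values'' fails: you have correctly computed the limit of the alternating sum, namely $\P(\zeta\in(-M\mathbf{1},M\mathbf{1}])$, but the pre-limit quantity is not the mass of a shrinking set, and the concentration conclusion does not follow. (A smaller point: $(l,u]$ need not even be a nonempty box, since $\tilde{\Sigma}_T^{1/2}\mathbf{1}\geqslant 0$ componentwise is not guaranteed.)

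For $p=1$, or more generally whenever $\tilde{\Sigma}_T$ is diagonal, your argument is correct and arguably more elementary than the paper's; it happens to cover both applications the paper makes of Lemma \ref{lem8}, since Proposition \ref{prop5} uses $\tilde{\Sigma}_T=\diag(\hat{\Sigma}_T)$ and Proposition \ref{prop7} uses $\tilde{\Sigma}_T=I$. To repair the general case you would need either to strengthen condition (b) so that it applies along random arguments $b_T\stackrel{\P}{\rightarrow}b$ (the true coordinatewise corners correspond to $b_T=\sqrt{T}\tilde{\Sigma}_T^{-1/2}(c_S-\tilde{\theta}_{T})=M\tilde{\Sigma}_T^{-1/2}[\sigma\odot(\tilde{\Sigma}_T^{1/2}\mathbf{1})]$, which does converge in probability to a fixed vector, so a sandwiching or continuity argument would close this), or to follow the paper's route of realizing $\hP$ as the conditional law of a bona fide random vector and invoking tightness, which sidesteps c.d.f.\ differencing altogether. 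Your final reduction to Lemmas \ref{lem4} and \ref{lem5} and the subsequence remark are fine as stated.
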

\begin{proof}  The idea of the proof is to map probability measures to random variables, and then note that   asymptotic normality trivially implies consistency for random variables.  By the upcoming Lemma \ref{lem7}, for  $T  $ big enough, there exists a random variable $Z_T :(\Omegabf \times \T,\mathcal{E}_\Omegabf \otimes \mathcal{E}_\T )\rightarrow (\T, \mathcal{E}_\T) $ s.t. for all $\theta \in \R^p $,
\begin{eqnarray}
\breve{\P}(Z_T \leqslant \theta|X_{1:\infty})=\hat{F}_{\theta^\bullet_T}(\theta),\label{eq13} \end{eqnarray} 
 where $\breve{\P}$ is a probability measure defined by equation (\ref{eq15}) in Lemma  \ref{lem7}.  If a sequence of symmetric matrices   converges to a positive-definite matrix, the matrices  of the sequence  are positive-definite for an index big enough.\footnote{By Corollary III.2.6 in Bhatia (1997), $ \max_{j \in \[1,p \]} |\text{eig}_j^{\downarrow}(\tilde{\Sigma}_T)-\text{eig}_j^{\downarrow}(\tilde{\Sigma})| \leqslant \Vert \tilde{\Sigma}_T-\tilde{\Sigma} \Vert  $ where $ \text{eig}_j^{\downarrow}(\tilde{\Sigma})$ denotes the eigenvalues of $ \tilde{\Sigma}$  in descending order. Now,  all the eigenvalues of $\tilde{\Sigma} $ are strictly positive, so that the eigenvalues of $\tilde{\Sigma}_T $ are strictly positive$ $ w.p.a.1, as $T\rightarrow \infty $. } Thus, by Assumption  \ref{assp13},  $ {\tilde{\Sigma}_T}$ is a positive-definite matrix w.p.a.1 as $T \rightarrow \infty $, which implies that it has a Cholesky decomposition w.p.a.1 as $T \rightarrow \infty $.
Therefore, for  $T  $ big enough, define 
\begin{eqnarray}
Y_T:=\sqrt{T}\tilde{\Sigma}^{-\frac{1}{2}}_T(Z_T-\tilde{\theta}_{T}) , \label{eq14}
\end{eqnarray}
so that, for all $y \in \R^p $, 
\begin{eqnarray*} 
\breve{\P}(Y_T\leqslant y| X_{1:\infty})& = & \breve{\P}(\sqrt{T}\tilde{\Sigma}^{-\frac{1}{2}}_T(Z_T-\tilde{\theta}_{T})\leqslant y| X_{1:\infty})\\
& = & \breve{\P}\left(Z_T \leqslant \tilde{\theta}_{T} +\tilde{\Sigma}^{\frac{1}{2}}_T\frac{y}{\sqrt{T}} | X_{1:\infty} \right)\\
& \stackrel{(a)}{=} & \hat{F}_{\theta^\bullet_T}( \tilde{\theta}_{T} +\tilde{\Sigma}^{\frac{1}{2}}_T\frac{y}{\sqrt{T}}) \text{  }\\ %
& \stackrel{(b)}{\stackrel{\P}{\rightarrow}} &   \mathfrak{N}(y;0;\grave{\Sigma}^{\frac{1}{2}}) \text{ as $T \rightarrow \infty $} 
\end{eqnarray*}
\textit{(a)} By the $\sigma(X_{1:\infty})$-measurability of  $\tilde{\theta}_{T} $ and $\tilde{\Sigma}_T$ (see condition (a), and Assumption \ref{assp16}), disintegration of $\breve{\P}$ w.r.t. $X_{1:\infty} $ allows us to regard  $\tilde{\theta}_{T} $ and $\tilde{\Sigma}_T $ as fixed (e.g., Kallenberg, 1997/2002, Theorem 6.4), and thus to use equation (\ref{eq13}). \textit{(b)} Use condition (b). 

Now, on one hand, convergence in probability is equivalent to convergence in $L^1$ for uniformly integrable sequences (e.g., Kallenberg, 1997/2002, Proposition 4.12).  On the other hand, $\breve{\P}(Y_T\leqslant y| X_{1:\infty})\leqslant 1 $, which implies that it is uniformly integrable. Therefore, as $T \rightarrow \infty $, 
\begin{eqnarray*}
& & \breve{\E}\left|\breve{\P}(Y_T\leqslant y| X_{1:\infty})-\mathfrak{N}(y;0;\grave{\Sigma}^{\frac{1}{2}})\right| \stackrel{\breve{\P}}{\rightarrow }0 \\
& \stackrel{(a)}{\Rightarrow}  & \breve{|\P}(Y_T\leqslant y)-\mathfrak{N}(y;0;\grave{\Sigma}^{\frac{1}{2}})| \stackrel{\breve{\P}}{\rightarrow }0\\
&\stackrel{(b)}{\Rightarrow}  & Y_T \stackrel{d}{\rightarrow} \mathcal{N}(0,\grave{\Sigma}) \text{ , under $\breve{\P} $ }\\
%
& \stackrel{(c)}{\Rightarrow}   & \frac{ Y_T}{\sqrt{T}}=O_{\breve{\P}}\left(\frac{1}{\sqrt{T}}\right)\\
& \stackrel{(d)}{\Rightarrow}  & \frac{ Y_T}{\sqrt{T}}+\tilde{\theta}_{T} \stackrel{\breve{\P}}{\longrightarrow}  \theta_0\\
& \stackrel{(e)}{\Rightarrow}  & Z_T \stackrel{d}{\rightarrow} \theta_0 \text{ , under $\breve{\P} $ }\\
& \stackrel{(f)}{\Rightarrow}  & \breve{\P}(Z_T \leqslant \theta)\stackrel{\breve{\P}}{\rightarrow} \ind_{[\theta_0, \infty[}(\theta) \text{ at any continuity point $\theta $ of $\ind_{[\theta_0, \infty[}(.) $}\\
& \stackrel{(g)}{\Rightarrow}  &\hat{F}_{\theta^\bullet_T}(\theta)= \breve{\P}(Z_T \leqslant \theta|X_{1:\infty})\stackrel{\breve{\P}}{\rightarrow} \ind_{[\theta_0, \infty[}(\theta) \text{ at any continuity point $\theta $  of $\ind_{[\theta_0, \infty[}(.) $}\\
& \stackrel{(h)}{\Rightarrow}  & \rho_P\left( \hF , \delta_{\theta_0} \right) \stackrel{\P}{\rightarrow} 0 
\end{eqnarray*}
\textit{(a)} By iterated conditioning and positivity of probabilities,    $\breve{|\P}(Y_T\leqslant y)-\mathfrak{N}(y;0;\grave{\Sigma}^{\frac{1}{2}})|=\left|\breve{\E}\breve{|\P}(Y_T\leqslant y| X_{1:\infty})|-\breve{\E}|\mathfrak{N}(y;0;\grave{\Sigma}^{\frac{1}{2}})|\right| \leqslant  \breve{\E}\left|\breve{\P}(Y_T\leqslant y| X_{1:\infty})-\mathfrak{N}(y;0;\grave{\Sigma}^{\frac{1}{2}})\right|$, where the inequality follows from the reverse triangle inequality for the $L^1 $ norm. \textit{(b)-(c)} Portmanteau theorem. \textit{(d)} By condition (a),  $\tilde{\theta}_{T} \stackrel{\P}{\rightarrow} \theta_0$,  as $T \rightarrow \infty $, which implies that $\tilde{\theta}_{T} \stackrel{\breve{\P}}{\rightarrow} \theta_0 $, as $T \rightarrow \infty $ by the upcoming Lemma \ref{lem7}(iii). \textit{(e)} Definition of $Y_T $ by equation (\ref{eq14}). \textit{(f)} Apply portmanteau theorem. \textit{(g)} By consistency adequacy  (see Remark \ref{rk6} in Appendix \ref{ap1} on p. \pageref{ap1}, and note that  $\breve{\P} $ and $\tilde{\theta}_T $ can take the place of $\P $ and $\hat{\theta}_T $, respectively, in the statement of Proposition \ref{prop6}), $\breve{\P}(Z_T \leqslant \theta|X_{1:\infty})\stackrel{\breve{\P}}{\rightarrow} \ind_{[\theta_0, \infty[}(\theta)$, where $\breve{\P}(Z_T \leqslant \theta|X_{1:\infty})=\hat{F}_{\theta^\bullet_T}(\theta) $ by  equation (\ref{eq13}).\textit{(h)} Lemma \ref{lem7}(iii),  $\hat{F}_{\theta^\bullet_T}(\theta)\stackrel{\P}{\rightarrow} \ind_{[\theta_0, \infty[}(\theta)$ at continuity point of $\ind_{[\theta_0, \infty[}(.) $. Then, note that the converge w.r.t. the Prokhorov metric corresponds to the convergence in law, which, in turn, is equivalent to the convergence of the c.d.f. at the continuity points of the limiting c.d.f. \end{proof}

\begin{lem}
\label{lem7} 
Let    $\kappa:\underline{\Sbf}_\infty \times  \mathcal{E}_\T \rightarrow [0,1]$  be a  random probability measure   from $\underline{\Sbf}_\infty$ to $\T$.
Under Assumptions \ref{assp1} and \ref{assp4}(a)(b), there exist
\begin{itemize}
\item[i)] a probability measure $\breve{\P} $ on $(\Omegabf \times \T,\mathcal{E}_\Omegabf \otimes \mathcal{E}_\T ) $ s.t.,  for all $A \in  \mathcal{E}_\Omegabf \otimes\ \mathcal{E}_\T$, 
\begin{eqnarray}
\breve{\P}(A)=\E\int_\T \ind_A(.,\theta)\kappa(X_{1:\infty}, \d \theta),\label{eq15}
\end{eqnarray}
\item[ii)] a random vector $Z:(\Omegabf \times \T,\mathcal{E}_\Omegabf \otimes \mathcal{E}_\T )\rightarrow (\T, \mathcal{E}_\T) $ s.t., for all $B \in  \mathcal{E}_\T $,
\begin{eqnarray*}
\breve{\P}(Z \in B|X_{1:\infty} )=\kappa(X_{1:\infty}, B )\quad \breve{\P}-a.s.,
\end{eqnarray*} 
\item[iii)] and, for all random sequences $(W_T)_{T=1}^\infty $,   $W_{T} \stackrel{\P}{\rightarrow} K $,  as $T \rightarrow \infty $, where $K $ is a constant vector, is equivalent to   $W_{T} \stackrel{\breve{\P}}{\longrightarrow} K $,  as $T \rightarrow \infty $.
\end{itemize}

\end{lem}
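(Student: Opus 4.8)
The plan is to construct $\breve{\P}$ as the composition of the marginal $\P$ with the probability kernel induced by $\kappa$, and then to read off parts (ii) and (iii) directly from that construction.

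For part (i), I would first observe that, since $X_{1:\infty}$ is $\mathcal{E}_\Omegabf/\underline{\mathcal{S}}_\infty$-measurable (Assumption \ref{assp1}) and $\kappa$ is a random probability measure from $\underline{\Sbf}_\infty$ to $\T$, the map $(\omega, B)\mapsto \kappa(X_{1:\infty}(\omega), B)$ is a probability kernel from $(\Omegabf, \mathcal{E}_\Omegabf)$ to $(\T, \mathcal{E}_\T)$. I would then define $\breve{\P}$ by the displayed formula and verify it is a probability measure. The key measurability point is that $\omega \mapsto \int_\T \ind_A(\omega,\theta)\,\kappa(X_{1:\infty}(\omega),\d\theta)$ is $\mathcal{E}_\Omegabf$-measurable for every $A \in \mathcal{E}_\Omegabf \otimes \mathcal{E}_\T$, so that the outer expectation makes sense; this is a routine functional-monotone-class argument, since the claim is immediate for rectangles $A = F \times B$ (the integrand is then $\ind_F(\omega)\kappa(X_{1:\infty}(\omega),B)$) and the collection of $A$ for which it holds is a $\lambda$-system containing the $\pi$-system of rectangles. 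Total mass one follows from $\kappa(X_{1:\infty},\T)=1$ $\P$-a.s., and countable additivity from monotone convergence applied under $\E$ and inside the integral. This is the standard construction of the measure $\P \otimes \kappa$ (e.g., Kallenberg, 1997/2002).

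For part (ii), I would take $Z$ to be the coordinate projection $Z(\omega,\theta):=\theta$, which is trivially $\mathcal{E}_\Omegabf \otimes \mathcal{E}_\T/\mathcal{E}_\T$-measurable. To establish $\breve{\P}(Z\in B|X_{1:\infty})=\kappa(X_{1:\infty},B)$ $\breve{\P}$-a.s., it suffices to check the defining identity of conditional expectation against the generating sets $G = \{X_{1:\infty}\in C\}\times \T$, $C \in \underline{\mathcal{S}}_\infty$, of the sub-$\sigma$-algebra of $X_{1:\infty}$ on the product. Since $\{Z \in B\}\cap G = \{X_{1:\infty}\in C\}\times B$, the formula of part (i) gives $\breve{\P}(\{Z\in B\}\cap G)=\E[\ind_{\{X_{1:\infty}\in C\}}\kappa(X_{1:\infty},B)]$; and since $\kappa(X_{1:\infty},B)$ depends on $\omega$ alone with $\kappa(X_{1:\infty},\T)=1$, the same formula gives $\int_G \kappa(X_{1:\infty},B)\,\d\breve{\P}=\E[\ind_{\{X_{1:\infty}\in C\}}\kappa(X_{1:\infty},B)]$. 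The two coincide, which is exactly the claim.

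For part (iii), I would identify the $\Omegabf$-marginal of $\breve{\P}$: taking $A = F \times \T$ and using $\kappa(X_{1:\infty},\T)=1$ yields $\breve{\P}(F\times \T)=\P(F)$ for all $F \in \mathcal{E}_\Omegabf$. Because each $W_T$ entering the statement is a function of $\omega$ (indeed $\sigma(X_{1:\infty})$-measurable, as in the applications inside the proof of Lemma \ref{lem8}), understood on the product through its canonical lift $(\omega,\theta)\mapsto W_T(\omega)$, the event $\{\Vert W_T - K\Vert > \eps\}$ is of the form $F\times \T$; hence $\breve{\P}(\Vert W_T - K\Vert>\eps)=\P(\Vert W_T - K\Vert>\eps)$ for every $\eps>0$ and $T$, and letting $T\to\infty$ gives the asserted equivalence in both directions. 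The main obstacle is purely one of bookkeeping in part (i) — the measurability of the inner integral in $\omega$ and the countable additivity of the resulting set function — together with making precise that both $W_T$ and $X_{1:\infty}$ are read on the enlarged space through their canonical lifts; once $\breve{\P}$ is in place, parts (ii) and (iii) are immediate disintegration and marginal-consistency facts, with no deep single step remaining.
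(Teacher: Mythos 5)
Your proposal is correct and follows essentially the same route as the paper: the paper disposes of parts (i) and (ii) by citing Kallenberg (1997/2002, Lemma 6.9), whose content is exactly the kernel-composition construction and coordinate-projection disintegration you write out, and its proof of part (iii) is the same marginal computation $\breve{\P}(W_T\in N_K)=\E[\ind_{\{W_T\in N_K\}}\kappa(X_{1:\infty},\T)]=\P(W_T\in N_K)$ that you give. The only difference is that you supply the measurability and countable-additivity details that the paper delegates to the cited reference.
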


\begin{proof} i)-ii) It corresponds to Lemma 6.9 in Kallenberg (1997/2002).
iii) For all neighborhoods $N_{K}$ of $K$, 
\begin{eqnarray*}
\breve{\P}( W_T \in N_{K} )& = & \breve{\P}\{(\omega, \theta)\in  \Omegabf \times \T:W_T(\omega) \in N_{K} \}\\
 %
 %
  & \stackrel{(a)}{=} & \E\int_\T \ind_{\{\omega\in  \Omegabf :W_T(\omega) \in N_{K} \} \times \T}(., \theta)\kappa(X_{1:\infty}, \d \theta)\\
& \stackrel{(b)}{=} & \E \ind_{\{\omega\in  \Omegabf :W_T(\omega) \in N_{K} \} }(.)\int_\T\kappa(X_{1:\infty}, \d \theta)\\
%
%
& \stackrel{(c)}{=} & \E \ind_{\{\omega\in  \Omegabf :W_T(\omega) \in N_{K} \} }(.)=\P( W_T \in N_{K})
\end{eqnarray*}
\textit{(a)} Definition of $\breve{\P} $ given by equation (\ref{eq15}). \textit{(b)} For all $\dot \omega \in \Omegabf $ and $\theta \in \T $,  $\ind_{\{\omega\in  \Omegabf :W_T(\omega) \in N_{K} \} \times \T}(\dot \omega, \theta)= \ind_{\{\omega\in  \Omegabf :W_T(\omega) \in N_{K} \} }(\dot \omega)$. \textit{(c)} By, definition of the Lebesgue integral for step functions, $\int_\T\kappa(X_{1:\infty}, \d \theta)= \kappa(X_{1:\infty}, \T)$, and then $\kappa(X_{1:\infty}, \T)=1 $ because, for all  $x_{1:\infty}\in\underline{\Sbf}_\infty   $, $\kappa(x_{1:\infty},.) $ is a probability measure on $(\T , \mathcal{E}_\T) $.  

Therefore,  $W_{T} \stackrel{\P}{\rightarrow} K $,  as $T \rightarrow \infty $, is equivalent to  $W_{T} \stackrel{\breve{\P}}{\longrightarrow} K $,  as $T \rightarrow \infty $.
%
\end{proof}

\subsubsection{Consistency of Gaussian approximations}

\begin{prop}\label{prop5}  Let $\tilde{\Sigma} $ be a positive-definite  matrix, and $(\hat{\Sigma}_T)_{T=1}^{\infty}  $ a sequence of $\sigma(X_{1:\infty})/\mathcal{B}(\R^{p^2}) $-measurable matrices. Under Assumptions \ref{assp1},  \ref{assp4}, and \ref{assp9}, if, as $T \rightarrow \infty $, $\hat{\Sigma}_T \stackrel{\P}{\rightarrow} \Sigma$ where $\Sigma $ is a positive-definite matrix, then,  as $T \rightarrow \infty $, 
$$
\rho_P\left( \mathfrak{N}\left(.;\theta^*_{T,G}; \frac{\diag(\hat{\Sigma}_T)^{\frac{1}{2}}}{\sqrt{T}} \right) , \delta_{\theta_0} \right) \stackrel{\P}{\rightarrow} 0,$$  
which, in turn, implies consistency of the Gaussian approximation by Lemmas \ref{lem4} and \ref{lem5}. \end{prop}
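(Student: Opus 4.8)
The plan is to obtain the statement as a direct application of Lemma \ref{lem8}, the general bridge from asymptotic normality of $\hP$ to its consistency, with the abstract ingredients specialized to the Gaussian approximation. Concretely, I would set $\hF := \mathfrak{N}\left(.;\theta^*_{T,G}; \frac{\diag(\hat{\Sigma}_T)^{1/2}}{\sqrt{T}}\right)$, take the recentering point $\tilde{\theta}_T := \theta^*_{T,G}$, and take the rescaling matrix $\tilde{\Sigma}_T := \diag(\hat{\Sigma}_T)$. Since Lemma \ref{lem8} already delivers $\rho_P(\hF, \delta_{\theta_0}) \stackrel{\P}{\rightarrow} 0$ and, through Lemmas \ref{lem4} and \ref{lem5}, consistency, the whole task reduces to checking that the hypotheses of Lemma \ref{lem8} hold for these choices: Assumptions \ref{assp16} and \ref{assp13}, and its conditions (a) and (b).

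First I would verify the two assumptions. For Assumption \ref{assp13}, observe that $\diag(\hat{\Sigma}_T) \stackrel{\P}{\rightarrow} \diag(\Sigma) =: \tilde{\Sigma}$; because $\Sigma$ is positive-definite its diagonal entries are strictly positive, so $\tilde{\Sigma}$ is a positive-definite (diagonal) matrix, while $\diag(\hat{\Sigma}_T)$ is symmetric and $\sigma(X_{1:\infty})$-measurable by hypothesis. For Assumption \ref{assp16}, the map $x_{1:\infty} \mapsto \hF$ is measurable since the Gaussian c.d.f. is a measurable function of its (measurable) location $\theta^*_{T,G}$ and scale $\diag(\hat{\Sigma}_T)^{1/2}$, and $\hF$ is a genuine probability measure on the event $\{\diag(\hat{\Sigma}_T) \succ 0\}$, whose probability tends to one; off this vanishing-probability event I would extend $\hF$ by any fixed default probability measure, which leaves the limit unaffected.

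Next, condition (a) asks for $\tilde{\theta}_T = \theta^*_{T,G} \stackrel{\P}{\rightarrow} \theta_0$. Assumption \ref{assp9} gives $\theta^\bullet_T = \theta_0 + \xi^\bullet/\sqrt{T} + R^\bullet_T \stackrel{\P}{\rightarrow} \theta_0$; since $\theta^*_{T,G}$ shares the unconditional law of $\theta^\bullet_T$ and convergence in probability to a constant is a property of the law alone (it is convergence in law to $\delta_{\theta_0}$), the data-based proxy inherits the convergence. Condition (b) is where the computation would live, but it collapses to an exact identity: standardizing the c.d.f. of $\mathcal{N}\left(\theta^*_{T,G}, \diag(\hat{\Sigma}_T)^{1/2}/\sqrt{T}\right)$ at the point $\theta^*_{T,G} + \diag(\hat{\Sigma}_T)^{1/2} b/\sqrt{T}$ by the very same matrix $\tilde{\Sigma}_T = \diag(\hat{\Sigma}_T)$ cancels the scale exactly, leaving $\hF\left(\theta^*_{T,G} + \diag(\hat{\Sigma}_T)^{1/2}\frac{b}{\sqrt{T}}\right) = \mathfrak{N}(b;0;I)$ on $\{\diag(\hat{\Sigma}_T)\succ 0\}$, an event of probability tending to one. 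Hence condition (b) holds with $\grave{\Sigma} = I$ and the convergence is trivial (indeed exact, w.p.a.1).

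With every hypothesis of Lemma \ref{lem8} in place, its conclusion $\rho_P(\hF, \delta_{\theta_0}) \stackrel{\P}{\rightarrow} 0$ follows, and consistency of the Gaussian approximation then follows from Lemmas \ref{lem4} and \ref{lem5}. I expect the only genuine friction to be bookkeeping rather than analysis: correctly matching the role of the rescaling matrix $\tilde{\Sigma}_T$ (chosen here to equal the diagonal matrix defining the approximation, which is precisely what forces $\grave{\Sigma}=I$) against the limiting covariance $\grave{\Sigma}$ of Lemma \ref{lem8}, and disposing of the vanishing-probability event on which $\diag(\hat{\Sigma}_T)$ need not be positive-definite, so that $\hF$ is everywhere a bona fide probability measure with a well-defined square-root scale.
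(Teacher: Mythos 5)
Your proposal is correct and follows essentially the same route as the paper's own proof: specialize Lemma \ref{lem8} with $\tilde{\theta}_T=\theta^*_{T,G}$ and $\tilde{\Sigma}_T=\diag(\hat{\Sigma}_T)$, check Assumptions \ref{assp16} and \ref{assp13} plus conditions (a) and (b), and observe that the standardization in condition (b) cancels exactly to give $\mathfrak{N}(b;0;I)$. Your extra care on two points the paper leaves implicit — deducing $\theta^*_{T,G}\stackrel{\P}{\rightarrow}\theta_0$ from Assumption \ref{assp9} via equality of unconditional laws, and handling the vanishing-probability event where $\diag(\hat{\Sigma}_T)$ fails to be positive-definite — is sound and consistent with how the paper treats these issues inside the proof of Lemma \ref{lem8}.
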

\begin{proof}  Check assumptions of  Lemma \ref{lem8} with $\hF(.)=\mathfrak{N}\left(.;\theta^*_{T,G}; \diag(\hat{\Sigma}_T)^{\frac{1}{2}}/\sqrt{T} \right)  $,  $\tilde{\Sigma}_T=\diag(\hat{\Sigma}_T) $, and   for $\tilde{\Sigma}=\diag(\tilde{\Sigma})  $, and then apply it.  Assumption \ref{assp16} is verified because $\hF(.)=\mathfrak{N}\left(.;\theta^*_{T,G}; \diag(\hat{\Sigma}_T)^{\frac{1}{2}}/\sqrt{T} \right)=\mathfrak{N}\left(\sqrt{T}\diag(\hat{\Sigma}_T)^{-\frac{1}{2}}(.-\theta^*_{T,G});0, I \right)   $  where $\theta^*_T $ and $\hat{\Sigma}_T $ are measurable by definition, and  $\mathfrak{N}\left(.;0, I \right)$ is continuous. Assumption \ref{assp13} is verified  because a diagonal matrix is symmetric, and $\Sigma$ is positive-definite by assumption. Assumption (a) of Lemma \ref{lem8} is verified by Assumption \ref{assp9}.  Assumption (b) of Lemma \ref{lem8} is verified because, by the change of variable $u=\sqrt{T} \diag(\hat{\Sigma}_T)^{-\frac{1}{2}}(\theta-\theta^*_{T,G})$,
for all $b \in \R^p $, $\mathfrak{N}\left(\theta^*_{T,G}+\diag(\hat{\Sigma}_T)^{\frac{1}{2}}\frac{b}{\sqrt{T}};\theta^*_{T,G}; \diag(\hat{\Sigma}_T)^{\frac{1}{2}}/\sqrt{T} \right)\\= \frac{1}{\sqrt{2 \pi T^p |\diag(\hat{\Sigma}_T)|_{\det}}}\int_{-\infty}^{\theta^*_{T,G}+\diag(\hat{\Sigma}_T)^{\frac{1}{2}}\frac{b}{\sqrt{T}}}\exp\left[-\frac{T}{2}(\theta-\theta^*_{T,G})'\diag(\hat{\Sigma}_T)^{-1}(\theta-\theta^*_{T,G}) \right] \lambda(\d \theta) \\= \frac{1}{\sqrt{2 \pi}}\int_{-\infty}^{b}\exp\left[-\frac{1}{2}u'u \right] \lambda(\d u)=\mathfrak{N}(b;0;1)$. 
\end{proof}

\subsubsection{Consistency of  Laplace approximations } \label{ap7_2_3}

In this subsection, under mild technical assumptions, we deduce the consistency  of weighted and criterion-adjusted Laplace approximations from the Bernstein-von Mises theorem, i.e., Assumption \ref{assp14}(c).

\begin{assp} \label{assp14}
\emph{\textbf{ (a)}}For $T $ big enough, $w(.)\geqslant 0 $ $\P$-a.s.  \emph{\textbf{(b)}} For $T $ big enough, $0 < \int_\T\e^{TQ_T(X_{1:T},\dot\theta)}w(\dot \theta)\lambda(\d \dot \theta)<\infty $ $\P$-a.s.   \emph{\textbf{(c)}} [Bernstein-von Mises theorem]  For all $b \in \R^p $, as $T \rightarrow \infty $,
\begin{eqnarray*}
\hat{F}_{\theta^\bullet_T} \left(\theta^*_{T,L}+\frac{b}{\sqrt{T}}\right) \stackrel{\P}{\rightarrow} \mathfrak{N}(b;0;\Sigma^{\frac{1}{2}})
\end{eqnarray*}
where $\hat{F}_{\theta^\bullet_T} (\theta):=\int_{-\infty}^\theta \frac{\e^{TQ_T(X_{1:T},\ddot\theta)}w(\ddot\theta)}{\int_\T\e^{TQ_T(X_{1:T},\dot\theta)}w(\dot \theta)\lambda(\d \dot \theta)} \lambda(\ddot \theta)$, and $ \theta^*_{T,L} \in \arg \max_{\ddot \theta \in \T}\e^{TQ_T(X_{1:T},\ddot\theta)}$.  \emph{\textbf{(d)}}  For all $x_{1:\infty} \in \underline{\Sbf}_\infty $, $\e^{TQ_T(x_{1:T},.)}w(.) $ is  $ \mathcal{E}_{\T}/\mathcal{B}(\R_{+})$-measurable. 
\end{assp}

Assumption \ref{assp14}(c) has been established under general assumptions (e.g., Le Cam, 1953, 1958;   Chen, 1985; Kim, 1998; Chernozhukov and Hong, 2003). Assumption \ref{assp14} (a)(b) ensures that $\hat{F}_{\theta^\bullet_T}(.) $ is a c.d.f. $\P$-a.s. for $T$ big enough. For the consistency of the criterion-adjusted weighted Laplace approximation, we also require the following Assumption \ref{assp15}. Assumption \ref{assp14}(d)
is a weak requirement that ensures the existence of random variables with a distribution specified through $\hat{F}_{\theta^\bullet_T}(.)$. 
\begin{assp} \label{assp15} \emph{\textbf{ (a)}} For all $\theta \in \T$, $\dot \theta \mapsto u(\theta, \dot \theta) $ is continuous and bounded.
\\ \emph{\textbf{(b)}} There exists a function $h:\T \rightarrow  \R $ s.t., for all $\theta \in \T $, for $T $ big enough,  $\int_{\T}u(\theta, \ddot \theta)\e^{TQ_T(X_{1:T},\ddot\theta)}w(\ddot\theta)\lambda(\d \ddot \theta)< h(\theta) $ $\P$-a.s.,  and $\int_T h(\theta) \lambda(\d \theta)< \infty $. 
\end{assp}

Assumption \ref{assp15}(a) typically follows from the continuity of $u(.,.) $, and the compactness of $\T$, while Assumption \ref{assp15}(b) typically follows from the assumptions on $Q_T(X_{1:T},\ddot\theta) $, that are needed to establish Assumption \ref{assp14}(c). 

\begin{prop} 
\label{prop7}
Under Assumptions \ref{assp1}, \ref{assp4}, \ref{assp13}, and \ref{assp14}, if, as $T \rightarrow \infty $, $\theta^*_{T,L} \stackrel{\P}{\rightarrow} \theta_0 $,
then 
 \begin{enumerate}
\item[i)] $\rho_P\left( \hat{F}_{\theta^\bullet_{T,WL}} (.), \delta_{\theta_0} \right) \stackrel{\P}{\rightarrow} 0 $,  as $T \rightarrow \infty $, which, in turn, implies consistency of the weighted Laplace  approximation by Lemmas \ref{lem4} and \ref{lem5};
\item[ii)] under the additional Assumptions  \ref{assp12}(a) and \ref{assp15}, as $T \rightarrow \infty $,

$\rho_P\left(  \frac{\int_{.}\int_{\T}u(\theta, \ddot \theta)\e^{TQ_T(X_{1:T},\ddot\theta)}w(\ddot\theta)\lambda(\d \ddot \theta)\lambda(\d \theta )}{\int_{\T^2} u(\theta, \dot \theta)\e^{TQ_T(X_{1:T},\dot\theta)}w(\dot \theta)\lambda(\d \dot \theta)\lambda(\d \theta )} ,\frac{\int_{.} u(\theta, \theta_0)\lambda(\d \theta)}{\int_\T u(\dot \theta, \theta_0)\lambda(\d \dot \theta)} \right) \stackrel{\P}{\rightarrow} 0 $, which, in turn, implies consistency of the criterion-adjusted weighted  Laplace  approximation by Lemmas \ref{lem4} and \ref{lem5}.

\end{enumerate}

\end{prop}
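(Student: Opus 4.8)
The plan is to deduce (i) directly from Lemma \ref{lem8}, and to obtain (ii) by combining (i) with the weak convergence of the weighted Laplace posterior and the regularity of the criterion function.

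For (i), I would instantiate Lemma \ref{lem8} with $\hF=\hat{F}_{\theta^\bullet_{T,WL}}$, $\tilde{\theta}_T:=\theta^*_{T,L}$, and $\tilde{\Sigma}_T:=I_p$ the identity matrix, so that $\grave{\Sigma}=\Sigma$ and Assumption \ref{assp13} holds trivially. The verification is then definition chasing. Assumption \ref{assp16}(a) follows from the measurability in Assumption \ref{assp14}(d), and Assumption \ref{assp16}(b) from Assumption \ref{assp14}(a)(b), which together guarantee that $\hat{F}_{\theta^\bullet_{T,WL}}$ is $\P$-a.s. a genuine c.d.f. for $T$ large. Condition (a) of Lemma \ref{lem8} is the hypothesis $\theta^*_{T,L}\stackrel{\P}{\rightarrow}\theta_0$ (with $\theta^*_{T,L}$ measurable as a maximizer), and condition (b) of Lemma \ref{lem8} is precisely the Bernstein--von Mises statement of Assumption \ref{assp14}(c). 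Lemma \ref{lem8} then delivers $\rho_P(\hat{F}_{\theta^\bullet_{T,WL}},\delta_{\theta_0})\stackrel{\P}{\rightarrow}0$, and consistency follows from Lemmas \ref{lem4} and \ref{lem5}.

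For (ii), the key algebraic observation is that, after an application of Fubini's theorem, the normalizing constant $Z_T:=\int_\T\e^{TQ_T(X_{1:T},\ddot\theta)}w(\ddot\theta)\lambda(\d\ddot\theta)$ cancels in the ratio. Writing $v_B(\ddot\theta):=\int_B u(\theta,\ddot\theta)\lambda(\d\theta)$ and letting $\hat{\nu}_T$ denote the weighted Laplace posterior (the probability measure with density $\propto\e^{TQ_T}w$), one gets for every Borel set $B$
\begin{equation*}
\frac{\int_{B}\int_{\T}u(\theta,\ddot\theta)\e^{TQ_T(X_{1:T},\ddot\theta)}w(\ddot\theta)\lambda(\d\ddot\theta)\lambda(\d\theta)}{\int_{\T^2}u(\theta,\dot\theta)\e^{TQ_T(X_{1:T},\dot\theta)}w(\dot\theta)\lambda(\d\dot\theta)\lambda(\d\theta)}=\frac{\int_\T v_B(\ddot\theta)\,\hat{\nu}_T(\d\ddot\theta)}{\int_\T v_{\T}(\ddot\theta)\,\hat{\nu}_T(\d\ddot\theta)}.
\end{equation*}
By part (i), $\hat{\nu}_T$ converges weakly to $\delta_{\theta_0}$ in probability; since $v_B$ and $v_{\T}$ are bounded and continuous (continuity and boundedness coming from Assumption \ref{assp15}(a), with the integrable envelope of Assumption \ref{assp15}(b) justifying passage of the limit under the $\theta$-integral), a subsequence argument based on the Portmanteau theorem gives $\int_\T v_B\,\d\hat{\nu}_T\stackrel{\P}{\rightarrow}v_B(\theta_0)=\int_B u(\theta,\theta_0)\lambda(\d\theta)$, and likewise for $v_{\T}$. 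The denominator limit $v_{\T}(\theta_0)=\int_\T u(\dot\theta,\theta_0)\lambda(\d\dot\theta)$ is strictly positive (the criterion function is nonnegative by Assumption \ref{assp12}(a) and maximized on the diagonal), so the ratio converges in probability to $\int_B u(\theta,\theta_0)\lambda(\d\theta)/\int_\T u(\dot\theta,\theta_0)\lambda(\d\dot\theta)$. As this holds for every Borel $B$ and the limit is a probability measure, the Portmanteau theorem converts the resulting pointwise convergence of c.d.f.'s into convergence in the Prokhorov metric, and Lemmas \ref{lem4} and \ref{lem5} again yield consistency.

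The main obstacle is the passage from convergence in probability of $\hat{\nu}_T$ to $\delta_{\theta_0}$ (a convergence of the random Prokhorov distance, not an almost-sure statement) to convergence in probability of the integrals $\int_\T v_B\,\d\hat{\nu}_T$. Because the mode of convergence is in probability, the Portmanteau theorem cannot be applied path-by-path; the standard remedy is the subsequence device---every subsequence admits a further subsequence along which $\rho_P(\hat{\nu}_T,\delta_{\theta_0})\rightarrow0$ almost surely---but it must be coordinated with the dominated-convergence step (controlled by the envelope $h$ of Assumption \ref{assp15}(b), or, as the paper notes for Assumption \ref{assp12}(c), by compactness of $\T$) that underpins the bounded continuity of $v_B$ and $v_{\T}$, and with the cancellation of the random normalizer $Z_T$. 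Getting these three ingredients to interlock cleanly is the delicate part; the remainder is routine.
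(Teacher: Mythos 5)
Your part (i) is essentially the paper's proof: the same instantiation of Lemma \ref{lem8} with $\tilde{\Sigma}_T=\tilde{\Sigma}=I$, with Assumption \ref{assp16} checked via Assumption \ref{assp14}(a)(b)(d) (the paper routes this through Lemma \ref{lem63}), condition (a) supplied by the hypothesis $\theta^*_{T,L}\stackrel{\P}{\rightarrow}\theta_0$, and condition (b) supplied by the Bernstein--von Mises Assumption \ref{assp14}(c). For part (ii) you take a genuinely different route: you apply Fubini first, so that the ratio becomes $\int_\T v_B\,\d\hat{\nu}_T\big/\int_\T v_\T\,\d\hat{\nu}_T$ with $v_B(\ddot\theta)=\int_B u(\theta,\ddot\theta)\lambda(\d\theta)$, and then apply the Portmanteau theorem once to the $\theta$-integrated test functions $v_B$, $v_\T$. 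The paper reverses the order of limits: for each \emph{fixed} $\theta$ it applies Portmanteau to $\ddot\theta\mapsto u(\theta,\ddot\theta)$ (which is bounded and continuous exactly by Assumption \ref{assp15}(a)), obtaining $\int_\T u(\theta,\ddot\theta)\,\hat{\nu}_T(\d\ddot\theta)\stackrel{\P}{\rightarrow}u(\theta,\theta_0)$ pointwise in $\theta$, and only then integrates over $\theta\in B$ using dominated convergence with the envelope $h$ of Assumption \ref{assp15}(b), before cancelling the normalizer by taking $B=B$ and $B=\T$. You also correctly flag the subsequence device needed to reconcile convergence in probability with Portmanteau, which the paper leaves implicit.

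The gap in your version of (ii) is that the hypotheses of Portmanteau for $v_B$ and $v_\T$ do not follow from the stated assumptions. Assumption \ref{assp15}(a) only gives, for each fixed $\theta$, boundedness and continuity of $\dot\theta\mapsto u(\theta,\dot\theta)$, with a bound $M(\theta)$ that may depend on $\theta$ and need not be $\lambda$-integrable over $\T$ (the proposition does not assume $\T$ compact, nor Assumption \ref{assp12}(c)). Hence $v_\T(\ddot\theta)=\int_\T u(\theta,\ddot\theta)\lambda(\d\theta)$ may fail to be finite, let alone bounded, and even its continuity in $\ddot\theta$ requires an integrable-in-$\theta$ envelope for $u(\theta,\cdot)$ that is uniform in $\ddot\theta$ near $\theta_0$ --- which is not what Assumption \ref{assp15}(b) provides: that envelope $h(\theta)$ dominates the $\ddot\theta$-integral of $u(\theta,\ddot\theta)$ against the (unnormalized) posterior, as a function of $\theta$, and is tailored precisely to the paper's order of operations (Portmanteau in $\ddot\theta$ first, then dominated convergence in $\theta$). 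To repair your route you would need an additional hypothesis such as $\int_\T\sup_{\ddot\theta}u(\theta,\ddot\theta)\lambda(\d\theta)<\infty$ (in the spirit of Assumption \ref{assp12}(c), not assumed here), or else revert to the paper's ordering, under which each of Assumptions \ref{assp15}(a) and \ref{assp15}(b) is used exactly where it fits.
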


\begin{proof} i) Check assumptions of  Lemma \ref{lem8} with   $\tilde{\Sigma}_T=I $, and   for $\tilde{\Sigma}=I  $, and then apply it. Assumption \ref{assp16} is verified because, under Assumptions \ref{assp14}(a)(b)(d), by the upcoming Lemma \ref{lem63}, for $T  $ big enough, $\widehat{\P \circ {\theta^{\bullet}_{T, WL}}^{-1}}:=\int_{.} \frac{\e^{TQ_T(X_{1:T},\ddot\theta)}w(\ddot\theta)}{\int_\T\e^{TQ_T(X_{1:T},\dot\theta)}w(\dot \theta)\lambda(\d \dot \theta)} \lambda(\ddot \theta)$ defines a random probability measure from $\underline{\Sbf}_{\infty} $ to $(\T, \mathcal{E}_\T) $ .
Assumptions (a) and (b) of Lemma \ref{lem8} are verified by assumption of Proposition \ref{prop7}  and Assumption \ref{assp14}(c), respectively. 

ii) By i), as $T \rightarrow \infty $,  $\rho_P\left( \hat{F}_{\theta^\bullet_T} (.), \delta_{\theta_0} \right) \stackrel{\P}{\rightarrow} 0$. Thus, by definition of the convergence in law and Assumption \ref{assp15}(a), for all $\theta\in \T $,
as $T \rightarrow \infty $, \begin{eqnarray*}
& \stackrel{}{\Rightarrow} & \int_{\T}u(\theta, \ddot \theta)\frac{\e^{TQ_T(X_{1:T},\ddot\theta)}w(\ddot\theta)}{\int_\T\e^{TQ_T(X_{1:T},\dot\theta)}w(\dot \theta)\lambda(\d \dot \theta)}\lambda(\d \ddot \theta)  \stackrel{\P}{\rightarrow}    u(\theta, \theta_0)\\
 & \stackrel{(a)}{\Rightarrow} &   \forall B \in  \mathcal{E}_{\T},\, \int_B\int_{\T}u(\theta, \ddot \theta)\frac{\e^{TQ_T(X_{1:T},\ddot\theta)}w(\ddot\theta)}{\int_\T\e^{TQ_T(X_{1:T},\dot\theta)}w(\dot \theta)\lambda(\d \dot \theta)}\lambda(\d \ddot \theta)\lambda(\d \theta)  \stackrel{\P}{\rightarrow}    \int_B u(\theta, \theta_0)\lambda(\d \theta)\\
& \stackrel{(b)}{\Rightarrow} & \forall B \in  \mathcal{E}_{\T}, \frac{\int_{B}\int_{\T}u(\theta, \ddot \theta)\e^{TQ_T(X_{1:T},\ddot\theta)}w(\ddot\theta)\lambda(\d \ddot \theta)\lambda(\d \theta )}{\int_{\T^2} u(\theta, \dot \theta)\e^{TQ_T(X_{1:T},\dot\theta)}w(\dot \theta)\lambda(\d \dot \theta)\lambda(\d \theta )}   \stackrel{\P}{\rightarrow}\frac{\int_{B} u(\theta, \theta_0)\lambda(\d \theta)}{\int_\T u(\dot \theta, \theta_0)\lambda(\d \dot \theta)}\\
& \stackrel{(c)}{\Rightarrow} & \rho_P\left(  \frac{\int_{.}\int_{\T}u(\theta, \ddot \theta)\e^{TQ_T(X_{1:T},\ddot\theta)}w(\ddot\theta)\lambda(\d \ddot \theta)\lambda(\d \theta )}{\int_{\T^2} u(\theta, \dot \theta)\e^{TQ_T(X_{1:T},\dot\theta)}w(\dot \theta)\lambda(\d \dot \theta)\lambda(\d \theta )} ,\frac{\int_{.} u(\theta, \theta_0)\lambda(\d \theta)}{\int_\T u(\dot \theta, \theta_0)\lambda(\d \dot \theta)} \right) \stackrel{\P}{\rightarrow} 0
\end{eqnarray*}
\textit{(a) }
By Assumption \ref{assp15}(b), apply
 Lebesgue dominated convergence theorem.
\textit{(b) } Note that simplification of the denominators yields $\frac{\int_{B}\int_{\T}u(\theta, \ddot \theta)\e^{TQ_T(X_{1:T},\ddot\theta)}w(\ddot\theta)\lambda(\d \ddot \theta)\lambda(\d \theta )}{\int_{\T^2} u(\theta, \dot \theta)\e^{TQ_T(X_{1:T},\dot\theta)}w(\dot \theta)\lambda(\d \dot \theta)\lambda(\d \theta )} \\=\frac{\int_{B}\int_{\T}u(\theta, \ddot \theta) \frac{\e^{TQ_T(X_{1:T},\ddot\theta)}w(\ddot\theta)}{\int_\T\e^{TQ_T(X_{1:T},\dot\theta)}w(\dot \theta)\lambda(\d \dot \theta)}\lambda(\d \ddot \theta)\lambda(\d \theta )}{\int_{\T^2} u(\theta, \ddot \theta)\frac{\e^{TQ_T(X_{1:T},\ddot\theta)}w(\ddot\theta)}{\int_\T\e^{TQ_T(X_{1:T},\dot\theta)}w(\dot \theta)\lambda(\d \dot \theta)}\lambda(\d \ddot \theta)\lambda(\d \theta )} $, and then apply twice the previous line, with $B=B $ and $B =\T $.
\textit{(c)} Apply Portmanteau theorem.  
\end{proof}
%

\begin{lem} \label{lem63} If, for all $x_{1:\infty} \in \underline{\Sbf}_\infty $, a function $g:\underline{\Sbf}_\infty \times \T \rightarrow \R $ is  $ \mathcal{E}_{\T}/\mathcal{B}(\R)$-measurable   on $\T $,  then, for all $A\in  \mathcal{E}_{\T}$,  $x_{1:\infty}\mapsto \int_{A}g(x_{1:\infty},\theta)  \lambda( \d\theta ) $ is $\underline{\mathcal{S}}_\infty/\mathcal{B}(\R) $-measurable. 

\end{lem}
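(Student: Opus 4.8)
The plan is to recognize this as the measurability assertion contained in Tonelli's theorem, and to establish it by the usual functional/monotone-class construction. The one point to settle at the outset is the precise sense of measurability of $g$: section-wise measurability in $\theta$ alone does \emph{not} force $x_{1:\infty}\mapsto\int_A g(x_{1:\infty},\theta)\lambda(\d\theta)$ to be measurable — for instance a product $h(x_{1:\infty})k(\theta)$ with $h$ non-measurable in $x_{1:\infty}$ and $\int_A k\,\d\lambda\neq0$ is section-wise measurable in $\theta$ yet yields a non-measurable partial integral. I would therefore read the hypothesis — in the sense satisfied in every application in Appendix \ref{ap7}, where $g$ is built from $\e^{TQ_T(x_{1:T},\theta)}w(\theta)$ — as joint $\underline{\mathcal{S}}_\infty\otimes\mathcal{E}_\T/\mathcal{B}(\R)$-measurability of $g$. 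Securing this joint measurability is the main obstacle; everything after it is routine.

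First I would reduce to the case $A=\T$ by replacing $g$ with $g\,\ind_A$, which is again jointly measurable since $\theta\mapsto\ind_A(\theta)$ is $\mathcal{E}_\T$-measurable. Next, because $\T$ is a Borel subset of $\R^p$, the restriction of Lebesgue measure $\lambda$ to $\T$ is $\sigma$-finite, which is exactly the hypothesis required by the integral-measurability half of Tonelli's theorem.

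I would then run the standard four-step argument for nonnegative $g$. Step one: for a measurable rectangle $E=S\times B$ with $S\in\underline{\mathcal{S}}_\infty$ and $B\in\mathcal{E}_\T$, the map $x_{1:\infty}\mapsto\int_\T\ind_E(x_{1:\infty},\theta)\lambda(\d\theta)=\ind_S(x_{1:\infty})\,\lambda(B)$ is plainly $\underline{\mathcal{S}}_\infty$-measurable. Step two: the collection of $E\in\underline{\mathcal{S}}_\infty\otimes\mathcal{E}_\T$ for which $x_{1:\infty}\mapsto\int_\T\ind_E(x_{1:\infty},\theta)\lambda(\d\theta)$ is measurable forms a Dynkin system containing the $\pi$-system of measurable rectangles, so by Dynkin's theorem it equals the whole product $\sigma$-algebra, using $\sigma$-finiteness to handle complements via a monotone exhaustion of $\T$ by finite-measure sets. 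Step three: extend to nonnegative simple functions by linearity of the integral. Step four: for general nonnegative jointly measurable $g$, choose simple functions $g_n\uparrow g$; by the monotone convergence theorem $\int_\T g_n(x_{1:\infty},\theta)\lambda(\d\theta)\uparrow\int_\T g(x_{1:\infty},\theta)\lambda(\d\theta)$ pointwise in $x_{1:\infty}$, and a pointwise limit of measurable functions is measurable.

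Finally, for a general real-valued $g$ I would split $g=g^+-g^-$, apply the nonnegative case to each part, and subtract on the set where both integrals are finite. Since in the intended application $g\geqslant0$ (it is a normalized product $\e^{TQ_T}w$), this last step is superfluous there, and the whole argument can alternatively be compressed into a single citation of the Tonelli/product-measurability lemma in Kallenberg (1997/2002).
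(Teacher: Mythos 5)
Your proof is correct, and it takes a genuinely different (and in fact more robust) route than the paper's. The paper argues via the functional Sierpinski monotone class theorem, taking for its $\pi$-system the rectangles $R\cap\T$ of $\T$ alone, implicitly identified with the cylinders $\underline{\Sbf}_\infty\times(R\cap\T)$; the $\sigma$-algebra these generate on the product is $\{\emptyset,\underline{\Sbf}_\infty\}\otimes\mathcal{E}_\T$, so read strictly that argument only yields membership of $g$ in the class $\mathcal{H}_A$ when $g$ is a (bounded) function of $\theta$ alone. You instead run the standard Tonelli measurability argument on the full product $\sigma$-algebra $\underline{\mathcal{S}}_\infty\otimes\mathcal{E}_\T$: rectangles $S\times B$, a Dynkin-system step using $\sigma$-finiteness of $\lambda$ on $\T$, simple functions, and monotone convergence, followed by the $g^+-g^-$ split. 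Your opening observation is the substantive point of the comparison: measurability of the sections $\theta\mapsto g(x_{1:\infty},\theta)$ --- which is all that the lemma's hypothesis, and Assumption \ref{assp14}(d) under which it is applied, literally assert --- does not imply measurability of $x_{1:\infty}\mapsto\int_A g(x_{1:\infty},\theta)\lambda(\d\theta)$, as your example $g(x_{1:\infty},\theta)=h(x_{1:\infty})k(\theta)$ with $h$ non-measurable shows; the hypothesis has to be upgraded to joint measurability, which is exactly what your proof uses and what the paper's $\pi$-system would also need (genuine product rectangles $S\times B$ rather than rectangles in $\T$ only) to deliver the stated conclusion. Your treatment also covers unbounded nonnegative $g$ via truncation and monotone convergence, which matters for the intended application to $\e^{TQ_T(x_{1:T},\theta)}w(\theta)$ and which the bounded-limit clause of the paper's monotone class argument leaves implicit.
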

\begin{proof}
 Let $A\in  \mathcal{E}_{\T}$. Define for this proof\begin{eqnarray*}
\mathcal{H}_{A}:= \left\{ h(.,.): \begin{matrix} h:\underline{\Sbf}_\infty \times \T \rightarrow \R  \\
\forall \theta \in \T, h(.,\theta)\text{ is  $\underline{\mathcal{S}}_\infty/\mathcal{B}(\R)$-measurable   }  \\
x_{1:\infty}\mapsto \int_{A}h(x_{1:\infty},\theta)  \lambda( \d\theta ) \text{ is } \underline{\mathcal{S}}_\infty/\mathcal{B}(\R)\text{-measurable} \\
\end{matrix}\right\} 
\end{eqnarray*}
Check the assumptions of a functional form of Sierspinki monotone class theorem(e.g.,   Florens, Mouchart and Rolin, 1990, Theorem 0.2.21). First, $\mathcal{H}_A$ is a $\R$-vector space because measurability is preserved by  linear combinations, and because the integral of a linear combination  of functions is the linear combination  of the integrals of the respective functions.  Second, $\mathcal{H}_A$ contains the constant function $1$. Third, if $(h_n(.) )_{n\geqslant 1}$ is a sequence of non-negative functions in $\mathcal{H}_A$ such that $h_n(.)\uparrow h(.) $ where $h(.) $ is a bounded function on $\T $, then  $h(.)\in \mathcal{H}_A$ by  preservation of  measurability  under limit  and the Lebesgue monotone convergence theorem. Fourth, $\mathcal{H_{A}} $ contains the indicator function of every set in the $\pi$-system consisting of measurable rectangles, $\mathcal{I}:= \left\{R\cap\T: R:= \tilde{R}\cap \T \text{ with }  \right.$   $\left. \tilde{R}=\prod_{i=1}^p[a_i,b_i] \text{ where }(a_i,b_i)\in \R^2 \wedge a_i \leqslant b_i \right\} $ (an intersection of two measurable rectangles is a measurable rectangle)  because, for all $R \in \mathcal{I} $, (i) $\forall \theta \in \T,  $ $x_{1:\infty}\mapsto\ind_R(\theta) $ is   $\underline{\mathcal{S}}_\infty/\mathcal{B}(\R)$-measurable, (ii) and for all $A \in  \mathcal{E}_{\T} $, $x_{1:\infty}\mapsto \int_{A}\ind_R(\theta)  \lambda( \d\theta ) =\lambda(A \cap R)< \lambda(\T)<\infty$  is  $\underline{\mathcal{S}}_\infty/\mathcal{B}(\R)$-measurable.

As $\sigma(\mathcal{I})= \mathcal{E}_{\T}$, by  a functional form of Sierspinki monotone class,  if a function $g(.,.)$  is  $ \mathcal{E}_{\T}/\mathcal{B}(\R)$-measurable,  $g\in\mathcal{H}_A$, and thus $x_{1:\infty}\mapsto \int_{A}g(x_{1:\infty},\theta)  \lambda( \d\theta ) $ is $\underline{\mathcal{S}}_\infty/\mathcal{B}(\R) $-measurable. \end{proof}

\section{Notations and overview of some practices}\label{ap9}
\newpage

\begin{landscape}

\begin{table} \caption{\textbf{Overview of some calibration and econometric practices (Section \ref{sec5_2})}}
\begin{tiny}
 \label{tab1}
\begin{tabular}{lllll}\hline \hline
Approximation &$\theta^*_T $ & $\propto \hat{f}_{\theta^\bullet_T}(\theta)$ $[\mu] $& $\check{\theta}_T$ m.a.e.  & $\propto \hat{f}_{\theta^\bullet_\infty}(\theta)$ $[\mu] $   \\\hline
Plain calibration& $\theta^*_{T,C} $ & $\ind_{\{ \theta^*_{T,C}\}}(\theta) $ $[\nu] $& $\theta^*_{T,C} $ & $\ind_{\{\theta_0\}} (\theta)$ $[\nu] $  \\
Criterium-adj. calibration &  $F^{-1}_{\theta^*_{T,CC}|\theta^{*}_{T,C}}(U^{\bullet}_{CC}) $ 
& $u(\theta, \theta^*_{T,C}) $ $[\lambda]$& $\displaystyle \arg \max_{\theta \in \T} u(\theta, \theta^*_{T,C})$ & $u(\theta, \theta_0) $ $[ \lambda] $ \\
Gaussian & $\theta^*_{T,G} $ & $ \exp\left[-\frac{T}{2}(\theta-\theta^*_{T,G}(\omega))'\diag(\Sigma^*_T(\omega))^{-1}(\theta-\theta^*_{T,G}(\omega)) \right]  $ $[\lambda] $& $\theta^*_{T, G} $ & $\ind_{\{\theta_0\}} (\theta)$ $[\nu] $\\
Plain Laplace & $\displaystyle\arg \max_{\theta \in \T} \e^{TQ_T(X_{1:T},\theta)}$ & $\e^{TQ_T(X_{1:T},\theta)} $ $[ \lambda] $ & $\theta^*_{T,L} $ & $\ind_{\{\theta_0\}} (\theta)$ $[\nu] $ \\
Weighted Laplace & $\displaystyle\arg \max_{\theta \in \T} \e^{TQ_T(X_{1:T},\theta)}w(\theta)$ & $\e^{TQ_T(X_{1:T},\theta)}w(\theta) $ $[ \lambda] $ & $\theta^*_{T,WL}$ & $\ind_{\{\theta_0\}} (\theta)$ $[\nu] $ \\
Criterium-adj. weighted Laplace& $F^{-1}_{\theta^*_{T,CWL}|\theta^{*}_{T,WL}}(U^{\bullet}_{CWL}) $ & $\int_\T u(\theta, \dot \theta)\e^{TQ_T(X_{1:T},\dot\theta)}w(\dot\theta) \lambda(\d \dot \theta) $ $[ \lambda] $  & $\displaystyle\arg \max _{\theta \in \T}\int_\T u(\theta, \dot \theta)\e^{TQ_T(X_{1:T},\dot\theta)}w(\dot\theta) \lambda(\d \dot \theta) $ & $u(\theta, \theta_0) $ $[ \lambda] $ \\\hline
\end{tabular}

\end{tiny}
\end{table}

\begin{table} \caption{\textbf{Main notations}}
\begin{tiny}
 \label{tab2}
\begin{tabular}{lll}
\hline
\hline
Notation & Description  \\ 
\hline
$\theta^*_T $  & Proxy of the unknown parameter $\theta_0 $ \\ 
$\theta^\bullet_T $  & Generic proxy of the unknown parameter $\theta_0 $  \\ 

$U^\bullet $  & Random variable uniformly distributed on $[0,1] $ that is independent from the data $X_{1:T} $ \\ 
$ \hat{f}_{\theta^\bullet_T}(.)$ & Approximation of the probability density function  of $\theta^\bullet_T $, i.e., $\frac{\d \hP}{\d \mu}(.) $ \\ 
$\lambda$ & Lebesgue measure  \\ 
$\nu$ & Counting measure  \\ 
$\check{\theta}_T$ & Neoclassical estimate  \\
$F_{\theta^*_T}$ & Cumulative distribution function of $\theta^*_T $  \\
$R_{1-\alpha,T} $& Neoclassical confidence region m.a.e. \\
$u(.,.)$ & Criterium function from $\T^2$ to $\R_+ $ \\
$w(.)$ & Weighting function from $\T$ to $\R_+ $, i.e., $\frac{\d (\widehat{\P\circ {\theta^\bullet_{T, WL}}^{-1}})}{\d (\widehat{\P\circ {\theta^\bullet_{T, L}}^{-1}})}(.)$ \\
$\[a,b \]$ & $\{c\in [a,b] \cap \N \} $ \\

$\mathcal{N}(\bar m, s) $ & Gaussian distribution with mean $\bar m$ and standard deviation $s $. \\

$\mathfrak{N}\left(\theta; \tau ;  s\right) $ & C.d.f. of $\mathcal{N}(\tau, s) $, i.e,  $\int_{-\infty}^\theta \mathfrak{n}\left(\dot \theta; \tau ;  s\right) \lambda(\d \dot \theta)$\\
\hline
\end{tabular}
\end{tiny}
\end{table}

\end{landscape}

\end{document}